\newtheorem{theorem}{Theorem}[section]
\newtheorem{prop}[theorem]{Proposition}
\newtheorem{lemma}[theorem]{Lemma}
\newtheorem{cor}[theorem]{Corollary}
\newtheorem*{type1*}{Type I}
\newtheorem*{type2*}{Type II}
\newtheorem{ex}[theorem]{Example}
\theoremstyle{remark}
\newtheorem{dfn}[theorem]{Definition}
\newtheorem{remark}[theorem]{Remark}
\def\co{\colon\thinspace}
\def\ep{\epsilon}
\def\R{\mathbb{R}}
\def\Z{\mathbb{Z}}
\def\N{\mathbb{N}}
\def\C{\mathbb{C}}
\def\K{\mathcal K}
\def\P{{\rm{Per}}}
\def\D{\rm{Diff}}
\def\S{{\rm Spec}\,}
\def\r{{\rm rank}}
\def\Ham{{\rm Ham}}
\def\CF{{\rm CF}}
\def\QH{{\rm QH}}
\def\HF{{\rm HF}}
\begin{document}
\title{Symplectic structure perturbations and continuity of symplectic invariants}

\author{Jun Zhang}
\email{junzhang@mail.tau.ac.il}
\address{School of Mathematical Sciences\\Tel Aviv University\\Ramat Aviv, Tel Aviv 69978, Israel}

\begin{abstract}
This paper studies how symplectic invariants created from Hamiltonian Floer theory change under the perturbations of symplectic structures, {\it not necessarily in the same cohomology class}. These symplectic invariants include spectral invariants, boundary depth, and (partial) symplectic quasi-states. This paper can split into two parts. In the first part, we prove some energy estimations which control the shifts of symplectic action functionals. These directly imply positive conclusions on the continuity of spectral invariants and boundary depth, in some important cases including any symplectic surface $\Sigma_{g \geq1}$ and any closed symplectic manifold $M$ with $\dim_{\K} H^2(M; \mathcal K) = 1$. This follows by applications on some rigidity of the subsets of a symplectic manifold in terms of heaviness and superheaviness, as well as on the continuity property of some symplectic capacities. In the second part, we generalize the construction in the first part to any closed symplectic manifold. In particular, to deal with the change of Novikov rings from symplectic structure perturbations, we construct a family of variant Floer chain complexes over a common Novikov-type ring. In this set-up, we define a new family of spectral invariants called $t$-spectral invariants, and prove that they are upper semicontinuous under the symplectic structure perturbations. This implies a quasi-isometric embedding from $(\R^{\infty}, |-|_{\infty})$ to $({\widetilde {\Ham}}(M, \omega), |-|_{H})$ under some dynamical assumption, imitating the main result from \cite{Ush13}.  \end{abstract}

\maketitle

\section{Introduction}

In Floer's method \cite{F89} of solving Arnold's conjecture (see also Hofer and Salamon \cite{HS95}), Floer chain complexes were constructed. Let $(M, \omega)$ be a symplectic manifold. A Floer chain complex symbolically depends on three parameters: an almost complex structure $J$ on $TM$, a Hamiltonian function $H \in C^{\infty}(\R/\Z \times M)$ and a symplectic structure $\omega$ on $M$. Conventionally, a Floer chain complex is denoted by $(\CF_*(M, J, H, \omega), \partial_{J, H, \omega})$ where $\partial_{J, H, \omega}$ is the differential. With the help of the symplectic action functional, a Floer chain complex can be viewed as a filtered chain complex $(\CF_*(M, J, H, \omega), \partial_{J, H, \omega}, \ell_{\omega, H})$ where $\ell_{\omega, H}$ is a filtration function with its values in $\R \cup \{-\infty\}$.  

A filtration function provides a ``height'' for each element in $\CF_*(M, J, H, \omega)$, and it satisfies the non-Archimedean triangle inequality. From this filtered viewpoint, some symplectic invariants were invented from a Floer chain complex or its homology, such as spectral invariants $\rho(a,H; \omega)$ (\cite{Vit92}, \cite{Sch00}, \cite{Oh05} and \cite{Ush08}), boundary depth $\beta(\phi; \omega)$ (\cite{Ush13} and \cite{UZ16}) and (partial) symplectic quasi-states $\zeta_a(H; \omega)$ (\cite{EP08} or \cite{EP09}). They have played important roles in the study of Hamiltonian dynamics as well as some rigidity properties of the subsets of a symplectic manifold. 

It is a natural question how these symplectic invariants change if the three parameters of a Floer chain complex are perturbed. It is well-known that the perturbations of almost complex structures do not affect the values of these invariants. Meanwhile, all these three invariants admit a Lipschitz continuity under the perturbations of Hamiltonian functions. The main results of this paper focus on the change of these symplectic invariants under the perturbations of symplectic structures. In this paper, our manifold $M$ is always assumed to be closed. 

Denote by $(M, \omega,  H)$ a Hamiltonian system where $\omega$ is a symplectic structure on the manifold $M$ and $H$ is a {\it non-degenerate} Hamiltonian function on $(M, \omega)$. Denote by $\Omega^2_{\tiny{\mbox{closed}}}(M)$ the set of all closed 2-forms of the manifold $M$. We call a closed 2-form $\omega'$ {\it a perturbation of $\omega$} if $\omega'$ is symplectic. We do not require that $\omega'$ and $\omega$ be in the same cohomology class. Let $\omega'$ be a perturbation of $\omega$. We call $\omega'$  {\it sufficiently close to $\omega$} if $\omega' - \omega \in \Omega^2_{\tiny{\mbox{closed}}}(M)$ is sufficiently small under a certain norm (see Section \ref{simple}). 

Before we state the main results, it is necessary to point out that some perturbations of a symplectic structure may change the Novikov  field  associated to $(M, \omega)$. Denote by $H^S_2(M)$ the image of $\pi_2(M)$ in $H_2(M; \Z)/{\rm Tor}$ under the Hurewicz map $\iota: \pi_2(M) \to H_2(M; \Z)$. Let $\mathcal K$ be a fixed field. Recall that an often used version of Novikov field $\Lambda^{\mathcal K, \Gamma_{\omega}}$ is defined by
\begin{equation} \label{dfn-nf}
\Lambda^{\mathcal K, \Gamma_{\omega}} : = \left\{ \sum_{\lambda \in  \Gamma_{\omega}} a_\lambda T^\lambda \, \bigg| a_{\lambda} \in \mathcal K \,\,\mbox{and}\,\, (\forall C \in \R) \,(\#\{a_\lambda \neq 0 \, | \, \lambda \leq C\} < \infty) \right\}
\end{equation} 
where $\Gamma_{\omega} =\{ {\rm Im}[\omega]: H^S_2(M) \to \R\} \leq \R$. Since $\Lambda^{\mathcal K, \Gamma_{\omega}}$ is the coefficient field of the Floer chain complex $(\CF_*(M, J, H, \omega), \partial_{J, H, \omega})$, a negative outcome is that, under a perturbed symplectic structure, the Floer chain complex may be defined over a different coefficient field. This makes the comparison between the symplectic invariants from this type of perturbations essentially harder than those from the perturbation of almost complex structures and of Hamiltonian functions. For instance, comparing $\rho(a, H; \omega)$ with $\rho(a, H; \omega')$ is in general ambiguous because where the class $a$ is taken is in question. In this paper, we will provide an approach to overcome this difficulty (see Section \ref{vari-fcc}). Finally, note that if our symplectic manifold $(M, \omega)$ is aspherical, that is $\pi_2(M) = 0$, then $\Lambda^{\mathcal K, \Gamma_{\omega}} = \K$ which is independent of the symplectic structures. Therefore, depending on whether $\Lambda^{\mathcal K, \Gamma_{\omega}}$ changes or not, we will state our main results in these two different situations.

\subsection{Continuity on aspherical manifolds} Here is the first main result in our paper. Recall that $\QH_*(M, \omega)$ denotes the quantum homology of $(M, \omega)$, that is $\QH_*(M, \omega) = H_*(M; \K) \otimes_{\mathcal K} \Lambda^{\K, \Gamma_{\omega}}$. In particular, if $M$ is aspherical,  then $\QH_*(M, \omega) = H_*(M; \K)$. 

\begin{theorem} \label{continuity result 1} Let $(M, \omega, H)$ be a Hamiltonian system where $M$ is aspherical. If $\omega'$ is a perturbation of $\omega$ and is sufficiently close to $\omega$, then there exists a constant $C$ such that 
\begin{itemize}
\item[(i)] $|\rho(a, H; \omega) - \rho(a, H; \omega')| \leq C |\omega - \omega'|$ for any $a \in H_*(M; \K)$;
\item[(ii)] $|\beta(H; \omega) - \beta(H; \omega')| \leq C|\omega - \omega'|$.
\end{itemize}
\end{theorem}

A standard example that satisfies the assumption in Theorem \ref{continuity result 1} is a symplectic surface $(\Sigma_g, \omega)$ with genus $g \geq 1$. Next, from a ``rescaling'' argument, a direct corollary of Theorem \ref{continuity result 1} is the following result under a slightly relaxed hypothesis on $M$. 

\begin{cor} \label{continuity result 2} Let $(M, \omega, H)$ be a Hamiltonian system where $M$ satisfies the condition $\dim_{\mathcal K} H^2(M; \mathcal K) = 1$. Denote by $[M]$ the fundamental class of $M$. If $\omega'$ is a perturbation of $\omega$ and is sufficiently close to $\omega$, then there exist constants $\tilde{C}_1$ and $\tilde{C}_2$ such that 
\begin{itemize}
\item[(i)] $|\rho([M], H; \omega) - \rho([M], H; \omega')| \leq \tilde C_1 |\omega - \omega'|$;
\item[(ii)] $|\beta(H; \omega) - \beta(H; \omega')| \leq \tilde C_2 |\omega - \omega'|$.
\end{itemize}
\end{cor}
Note that Corollary \ref{continuity result 2} covers some important cases, for instance, $\C P^n$ for any $n \in \N$. Roughly speaking, the proof of Theorem \ref{continuity result 1} splits into two steps. First, as elaborated in Section \ref{simple}, if $\omega'$ is a perturbation of $\omega$, then there exists a diffeomorphism on $M$ which ``reduces'' $\omega'$ to $\phi^* \omega'$ in the sense that $\phi^*\omega'$ coincides with $\omega$ near any Hamiltonian 1-periodic orbit of the Hamiltonian system $(M, H, \omega)$ (see Definition \ref{dfn-rp}). This considerably simplifies our discussion. Second, the desired estimation in Theorem \ref{continuity result 1} comes from a Floer type argument which is based on some new energy estimations established in Section \ref{sec-e-e}. In this second step, $\omega'$ is required to be sufficiently close to $\omega$. 

As applications of Theorem \ref{continuity result 1}, we can reprove results on some rigidity properties of the subsets of a symplectic surface $\Sigma_g$ with genus $g  \geq 1$.  To this end, we need the following definition from Definition 2.9 in \cite{Kaw14}.

\begin{dfn} \label{bsc} For a fixed symplectic manifold $(M, \omega)$ and an element $a \in \QH_*(M, \omega)$, a subset $U \subset M$ satisfies the {\it bounded spectrum condition with respect to $a$} if there exists a constant $K>0$ such that $\rho(a,H; \omega) \leq K$ for any Hamiltonian function $H$ supported in $\R/\Z \times U$. \end{dfn}

We can prove the following result.

\begin{theorem}\label{bscP} Let $(\Sigma_{g}, \omega)$ be a symplectic surface with genus $g \geq 1$. Then the disjoint union of simply connected open subsets satisfies the bounded spectrum condition for any $a \in H_*(\Sigma_{g}; \K)$. \end{theorem}

Recall that the concepts of heaviness and superheaviness of the subsets of a symplectic manifold are introduced in \cite{EP09} in order to study in a systematical way the non-displaceability properties (Definition \ref{hsh2}). The following result is a corollary of Theorem \ref{bscP}. 

\begin{cor} \label{bscPP} Let $(\Sigma_{g}, \omega)$ be a symplectic surface with genus $g \geq 1$.
\begin{itemize}
\item[(a)] For a closed subset $X \subset \Sigma_{g}$, if $\Sigma_{g} \backslash X$ is a disjoint union of simply connected open subsets, then it is $a$-superheavy for any $a \in H_*(\Sigma_{g}, \omega)$.
\item[(b)] If a closed subset $X \subset \Sigma_{g}$ is contained in a disk, then $X$ is \emph{not} $a$-heavy for any $a \in H_*(\Sigma_{g}; \K)$.
\end{itemize} 
\end{cor}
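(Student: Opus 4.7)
The plan is to deduce both parts from Theorem \ref{bscP} by an Ostrover-type rescaling argument together with the standard intersection property between heavy and superheavy subsets.

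For part (a), I would set $U := \Sigma_g \setminus X$, which by hypothesis is a disjoint union of simply connected open regions, and let $K = K(a)$ be the constant provided by Theorem \ref{bscP}, so that $\rho(G, a; \omega) \leq K$ for every $G \in C^{\infty}(\R/\Z \times U)$. Given $H \in C^{\infty}(M)$, put $c := \sup_{X} H$. For every small $\delta, \epsilon > 0$, the set $X$ sits inside the open neighborhood $\{H < c + \delta\}$, so one can form $G_{\delta, \epsilon} := \psi_{\epsilon}(H - c - \delta)$, where $\psi_\epsilon \co \R \to \R$ is a smooth monotone approximation of $\max(\cdot, 0)$ which vanishes on $(-\infty, 0]$ and satisfies $\psi_\epsilon(t) \geq t - \epsilon$ everywhere. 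Then $G_{\delta, \epsilon}$ vanishes on an open neighborhood of $X$, so $k G_{\delta, \epsilon} \in C^{\infty}(\R/\Z \times U)$ for every $k \in \N$, and $G_{\delta, \epsilon} \geq H - c - \delta - \epsilon$ pointwise. Monotonicity of spectral invariants, the constant-shift identity $\rho(H + c_0, a) = \rho(H, a) + c_0$, and the bound $\rho(k G_{\delta, \epsilon}, a) \leq K$ then combine to give
\[
\rho(k H, a; \omega) \leq k(c + \delta + \epsilon) + K \quad \text{for every } k \in \N.
\]
Dividing by $k$, passing to the homogenization $\zeta_a(H) = \lim_{k \to \infty} \rho(kH, a; \omega)/k$, and then letting $\delta, \epsilon \to 0$ yields $\zeta_a(H) \leq \sup_X H$, which is exactly the defining inequality for $a$-superheaviness.

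For part (b), I would pick an open disk $D \subset \Sigma_g$ containing $X$ and set $Y := \Sigma_g \setminus D$. Since $\Sigma_g \setminus Y = D$ is a single simply connected open region, part (a) shows that $Y$ is $a$-superheavy. Because $X \subset D$, the subsets $X$ and $Y$ are disjoint, while the Entov-Polterovich intersection theorem asserts that every $a$-heavy set must meet every $a$-superheavy set; thus $X$ cannot be $a$-heavy.

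The only genuinely delicate step is the cutoff construction in part (a): the bounded spectrum condition of Definition \ref{bsc} demands its Hamiltonian to live in $C^{\infty}(\R/\Z \times U)$, i.e., to vanish on a full open neighborhood of $X$ and not merely on $X$ itself, so one cannot simply feed $k(H - c)$ into Theorem \ref{bscP}. The auxiliary shift $\delta$ pushes the support of $G_{\delta, \epsilon}$ strictly off $X$, while the slack $\epsilon$ preserves the pointwise inequality $G_{\delta, \epsilon} \geq H - c - \delta - \epsilon$; both slack parameters are absorbed in the final limit $\delta, \epsilon \to 0$. Once this cutoff is in place, the rescaling $H \mapsto kH$ and the intersection property of heavy/superheavy sets make both conclusions essentially automatic.
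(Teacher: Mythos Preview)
Your proposal is correct. For part (a) you work directly with Definition~\ref{hsh2} and build an explicit cutoff $G_{\delta,\epsilon}$ so that the test Hamiltonian is genuinely supported in $U=\Sigma_g\setminus X$ before invoking Theorem~\ref{bscP}; the paper instead appeals to the equivalent characterization in Definition~\ref{hsh1} (test functions $H\geq 0$ with $H|_X=0$) and applies Theorem~\ref{bscP} to such $H$ without an explicit cutoff, so your version is actually a bit more careful on the support issue while following the same underlying idea (bound $\rho(kH,a)$ uniformly in $k$, then homogenize). For part (b) the routes diverge: the paper argues directly by choosing $H$ supported in the containing disk with $H|_X\geq\delta>0$, observing $\zeta_a(H)=0<\inf_X H$ from the bounded-spectrum bound, and contradicting Definition~\ref{hsh2}. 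Your argument via part (a) applied to $Y=\Sigma_g\setminus D$ together with the Entov--Polterovich intersection theorem (heavy sets meet superheavy sets) is equally valid and more conceptual, at the price of importing that intersection result as a black box rather than exhibiting the violating Hamiltonian by hand.
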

Notice that (a) includes the embedding of a wedge of circles $\bigvee_{i=1}^{2g} S^1 \hookrightarrow \Sigma_{g \geq 1}$. This is studied in Example 4.8 in Ishikawa \cite{Ish15} which generalizes the main result from \cite{Kaw14}. Also, (b) provides a topological obstruction for a closed subset to be $a$-heavy. When $a= [M]$, a necessary and sufficient condition for a subset to be $a$-heavy or $a$-superheavy on a symplectic surface is given in \cite{HRS14}.  \\

Another application of Theorem \ref{continuity result 1} and Corollary \ref{continuity result 2} is on the continuity of some symplectic capacities under the perturbations of symplectic structures. Let $A$ be a subset of $M$. Here we mainly focus on the displacement energy $e^{\omega}(A)$, Hofer-Zehnder capacity $c^{\omega}_{HZ}(A)$ and the spectral capacity $c^{\omega}_{\rho}(A)$. Recall that the displacement energy is defined by $e^{\omega}(A) : = \inf\{||H||_H\,|\, \phi_H^1(A) \cap A = \emptyset\}$ and $||-||_{H}$ is the Hofer norm. The capacities $c^{\omega}_{HZ}(A)$ and $c^{\omega}_{\rho}(A)$ are defined in Definition \ref{dfn-hz} and \ref{dfn-sc}, respectively. Here we put the symplectic structure $\omega$ in all the notations to emphasize their dependence on the symplectic structures. It is well-known as the {\it energy-capacity inequality} (see Theorem 1 in \cite{FGS05}) that 
\begin{equation} \label{e-c-i}
c^{\omega}_{HZ}(A) \leq c^{\omega}_{\rho}(A) \leq e^{\omega}(A).
 \end{equation}
 
 In general, we should not expect any continuity of $e^{\omega}(A)$ under the perturbations of symplectic structures, due to the following easy example.
 
\begin{ex} \label{ex2}
Take $(M = S^2, \omega_0 = \omega_{std})$ and let $A$ be the {\bf open} upper hemisphere. Then the subset $A$ is displaceable under $\omega_0$ and $e^{\omega_0}(A)= 2 \pi$. Consider a sequence of symplectic structures $\omega_n \to \omega_0$ with $\omega_n = \omega_0 + \alpha_n$ where $\alpha_n$ is {\it positively} supported over a nonempty open subset of $A$ and {\it negatively} supported over a nonempty open subset of $M \backslash \bar{A}$ such that the total area of $M$ remains the same for each $\omega_n$. Then $A$ is not displaceable under $\omega_n$ by an area consideration. By definition, $e^{\omega_n}(A) = \infty$, which implies that $e^{\omega}(A)$ is {\it not} upper semicontinuous at $\omega_0$. 
\end{ex}

However, we have the following ``boundedness'' conclusion. 

\begin{theorem}\label{bd-cap} Let $(M, \omega)$ be a symplectic manifold and $A \subset M$ be a subset. If $M$ is aspherical or $\dim_{\K}H^2(M; \K)=1$, then for any $\ep>0$, there exists a neighborhood $U_{\omega}(\ep)$ of $\omega$ in $\Omega^2_{\tiny{\mbox{\rm closed}}}(M)$ such that $e^{\omega'}(A) \geq c_{\rho}^{\omega}(A) - \ep \geq c_{HZ}^{\omega}(A) - \ep$ for any $\omega' \in U_{\omega}(\ep)$. \end{theorem}

\subsection{Semicontinuity on general manifolds} \label{ss-sgm} For general symplectic manifolds, we will deal with the coefficient-change problem of Floer chain complexes in the following way. First of all, due to Moser's trick, it is easy to see how the symplectic invariants change when the perturbation is formed from an {\it exact} 2-form. Then the discussion on perturbations can be simplified to be in $H^2(M; \K)$. Since $H^2(M; \K)$ is a finite-dimensional vector space, for a fixed symplectic structure $\omega$, there exist $[\omega_1], ... ,[\omega_m]$ as vertices forming a polygon $\Delta(\omega)$ in $H^2(M; \K)$ such that  each $\omega_i$ is a perturbation of $\omega$ and $[\omega] \in \Delta^{\circ}(\omega)$, the interior of $\Delta(\omega)$. Then if a perturbation $\omega'$ is sufficiently close to $\omega$, then 
\begin{equation} \label{cov-lin}
[\omega'] = t_0 [\omega] + t_1 [\omega_1] + ... t_m [\omega_m]
\end{equation}
for some nonnegative $t_0, ..., t_m$ where $\sum_{i=0}^m t_i = 1$. To simplify our discussion further, due to Proposition \ref{re-sim-set-up}, all the perturbations of $\omega$ here are assumed to coincide with $\omega$ near any Hamiltonian 1-periodic orbit of the Hamiltonian system $(M, \omega, H)$ (cf. Definition \ref{dfn-rp}). For any Hamiltonian system $(M, \omega, H)$, we give the following definition.
\begin{dfn} \label{dfn-nov-mfc} Define a {\it Novikov ring with multi-finiteness condition} as 
{\small \begin{equation*} 
\Lambda_{\Delta(\omega)} = \left\{ \sum_{A \in H^S_2(M)} a_A T^A \, \bigg| \, a_A \in \mathcal K, (\forall C \in \R)\,(\forall \omega' \in \Delta(\omega))\, (\#\{a_A \neq 0 \, | \, [\omega'](A) \leq C\} < \infty) \right\}.
\end{equation*}} 
\end{dfn}
Proposition \ref{well-dfn of bo} implies that there exists a well-defined family of (filtered) Floer chain complexes parametrized by $\Delta(\omega)$ over this common coefficient ring $\Lambda_{\Delta(\omega)}$. This family of filtered complexes is denoted by
\begin{equation} \label{Delta-Floer}
\{(\CF_{\Delta(\omega)}(M, H), \partial_{\omega'}, \ell_{\omega'})\}_{\omega' \in \Delta(\omega)}. 
\end{equation}
The construction of (\ref{Delta-Floer}) takes its inspiration from \cite{Ono05}. Moreover, for every two parameters $\omega_1, \omega_2 \in \Delta(\omega)$, Proposition \ref{st-htp} says that their corresponding filtered Floer chain complexes from (\ref{Delta-Floer}) are homotopy equivalent to each other. Therefore, there exists a well-defined homology $\HF_{\Delta(\omega)}(M, H)$ over $\Lambda_{\Delta(\omega)}$. In other words, we manage to assemble the information of Floer chain complexes or their homologies under various perturbations of $\omega$ into a family of filtered chain complexes or a single homology, but over a complicated coefficient ring. It can be shown that $\HF_{\Delta(\omega)}(M, H) \simeq H_*(M; \K) \otimes_{\K} \Lambda_{\Delta{\omega}}: = \QH_{\Delta(\omega)}(M)$. If we choose a class $a \in \QH_{\Delta(\omega)}(M)$, then there will be no ambiguity when we compare spectral invariants under different symplectic structures. In fact, from $\HF_{\Delta(\omega)}(M, H)$, we can define a $\Delta(\omega)$-family of spectral invariants denoted by $\rho_{\omega'}(a, H)$ for any $a \in \QH_{\Delta(\omega)}(M)$ (see  Definition \ref{t-si}). Our next main result is on the continuity of this new spectral invariants. 

\begin{theorem} \label{upper-semi} Let $(M, \omega, H)$ be a Hamiltonian system, and $a \in \QH_{\Delta(\omega)}(M)$ be a fixed class. Then the map from $\Delta(\omega)$ to $\R$ by $\omega' \to \rho_{\omega'}(a, H)$ is upper semicontinuous at $\omega$.\end{theorem}

Meanwhile, Lemma \ref{prop-si} compares the standard spectral invariants $\rho(a, H; \omega)$ with $\rho_{\omega}(a, H)$, and they turn out to be the same. Therefore, we directly get the following corollary on the continuity of the standard spectral invariants. 

\begin{cor} \label{cor-upper} Let $(M, \omega, H)$ be a Hamiltonian system, and $a \in \QH_{\Delta(\omega)}(M)$ be a fixed class. Then the map from $\Delta(\omega)$ to $\R$ by $\omega' \to \rho(a, H; \omega')$ is upper semicontinuous at $\omega$.\end{cor}

\begin{remark} There is an obvious question on the lower semicontinuity of the spectral invariants under the perturbations of symplectic structures. Unfortunately, the method we provide here can not conclude any positive or negative conclusion on the lower semicontinuity (see Remark \ref{lower-semi}). \end{remark}

\begin{remark} Recall that boundary depth is defined from the standard Floer chain complex. In the general set-up, it is difficult to conclude any quantitative conclusion between two standard Floer chain complexes that are defined from different symplectic structures, so we can appeal to the variant Floer chain complex (\ref{Delta-Floer}) when comparing two boundary depths. For instance, one can define a $\Delta(\omega)$-parameterized boundary depth from (\ref{Delta-Floer}). However, our method and estimations are unable to give any positive or negative conclusion on its continuity. \end{remark}

Next, we want to say a few words on  (partial) symplectic quasi-states. The method we used in this paper can't apply to study the continuity of (partial) symplectic quasi-states. The main reason is that in our discussion the Hamiltonian function $H$ should be fixed whenever a symplectic structure is perturbed. In general, one should not expect any continuity result for (partial) symplectic quasi-states. We illustrate this by the following easy example, similarly to Example \ref{ex2}. 

\begin{ex} \label{ex1} Take $(M = S^2, \omega_0 = \omega_{std})$ centered at the origin in $xyz$-coordinates. The standard equation $L = \{z=0\}$ is a heavy subset and then it is not displaceable by any Hamiltonian diffeomorphism. For any given $\ep>0$, let $S_{\ep} := \{(x, y, z) \in S^2 \,| \, z \geq - \ep\}$ and $H$ be a time-independent Hamiltonian function supported on $S_{\ep}$ such that $H|_L = 1$ and $\omega_0 ({\rm supp}(H)) = \frac{1}{2} \omega_0(S^2)$. Consider a closed 2-form $\alpha$ only {\bf positively} supported on $S^2 \backslash S_{\ep}$. For any class $a \in \QH_*(S^2, \omega_0)$ and any $\delta>0$, Definition \ref{hsh2} of a heavy subset says that
\[ \zeta_a (H; \omega_0) \geq \inf_{L} H = 1.\]
However, $\zeta_{a}(H; \omega_{0} + \delta\alpha) = 0$ because $(\omega_0+ \delta\alpha) ({\rm supp}(H)) < \frac{1}{2} (\omega_0+\delta\alpha)(S^2)$ implies that the support of $H$ is displaceable in $(S^2, \omega_0 + \delta \alpha)$. 
\end{ex}

\begin{remark} Our choice of the perturbation in Example \ref{ex1} is quite special. It is easy to see that there are plenty of other perturbations that we can take so that symplectic quasi-states are invariant (in particular change continuously). In general, it would be interesting to systematically study in which way we can perturb the symplectic structure so that symplectic quasi-states can satisfy a continuity result. \end{remark}

Recall that $d_H$ denotes Hofer's metric on the universal cover of Hamiltonian diffeomorphism group  of $(M, \omega)$ denoted by $\widetilde{\Ham}(M, \omega)$. As a standard application of Theorem \ref{upper-semi}, we have the following Theorem \ref{embedding} which is similar to Theorem 1.1 in \cite{Ush13}. It concludes a large-scale geometric property of the metric space $(\widetilde{\Ham}(M, \omega), d_H)$ when $M$ satisfies a certain dynamical condition. This dynamical condition imitates the assumption in Theorem 1.1 in \cite{Ush13}. 

\begin{theorem} \label{embedding} Suppose that a manifold $M$ admits a symplectic structure $\omega$ satisfying the saa-condition ((2) in Definition \ref{ap}). Then there exists an embedding $\Phi: \R^{\infty} \to \widetilde{\Ham}(M, \omega)$ such that for every $\vec{v}, \vec{w} \in \R^{\infty}$, we have
\[ |\vec{v}- \vec{w}|_{\infty} \leq d_{H} (\Phi(\vec{v}), \Phi(\vec{w})) \leq osc(\vec{v} - \vec{w})\]
where $|\vec{a}|_{\infty} = \max_{i} |a_i|$ and $osc(\vec{a}) = \max_{i,j}|a_i - a_j|$ for the vector $\vec{a} = (a_1, a_2, ...)$.
\end{theorem}

\subsection{Outline of the paper} 

In Section \ref{pre}, the background of Hamiltonian Floer theory and several symplectic invariants derived from this theory are briefly reviewed. In Section \ref{simple}, we explain how a general perturbation of a symplectic structure can be simplified into a reduced perturbation (see Definition \ref{dfn-rp}), and this key step is given by Proposition \ref{re-sim-set-up}. Imitating the standard homotopy argument between two Hamiltonian functions in Hamiltonian Floer theory, Section \ref{sec-e-e} discusses various energy estimations in terms of the perturbations of symplectic structures. The main results in this section are Proposition \ref{ene-est} and Proposition \ref{ene-est-cor3}. In Section \ref{sec-cont} and Section \ref{sec-app}, the main results of this paper, Theorem \ref{continuity result 1} and \ref{continuity result 2} as well as Theorem \ref{bscP} and Theorem \ref{bd-cap} on applications, are proved. In particular, the proof of Theorem \ref{bscP} takes inspiration from Ostrover's trick (\cite{Ost03}). Section \ref{vari-fcc} serves as an algebraic preparation for the discussion of any closed symplectic manifold (in the sense that Novikov fields have to be considered). In this section, we define a parametrized Floer chain complex over an extended version of the Novikov ring (see Proposition \ref{well-dfn of bo}). Based on this discussion, another main result of this paper, Theorem \ref{upper-semi}, is proved in Section \ref{sec-si}. Finally, Theorem \ref{embedding} on an application of the Hofer geometry on $\widetilde{\rm Ham}(M, \omega)$ is proved in Section \ref{sec-emb}. 

\subsection{Acknowledgement} First of all, I would like to thank my Ph.D. advisor, Michael Usher, for introducing this problem to me, as well as many helpful discussions and suggestions along the process of writing up this paper. Second, I would like to thank several useful conversations with Michael Entov, Yong-Geun Oh, Kaoru Ono, Leonid Polterovich, Sobhan Seyfaddini, and Weiwei Wu. I am also grateful for Stefan M{\"u}ller for his common interest in this problem and his generous sharing of many ideas.  Last but not least, I am thankful to the anonymous referee for his/her suggestions and corrections.

\section{Preliminaries} \label{pre}

\subsection{Floer chain complex} \label{ss-fcc} In this subsection, we will briefly review the constriction of a Floer chain complex.  In an abstract language, a Floer chain complex is an example of a filtered complex defined as follows. Denote by $\Lambda^{\K, \Gamma}$ an abstract Novikov field (over ground field $\K$), that is, 
\[ \Lambda^{\K, \Gamma} := \left\{ \sum_{\lambda \in \Gamma} a_{\lambda} T^{\lambda}\, \bigg|\, \Gamma \leq \R, a_{\lambda} \in \mathcal K \,\,\mbox{and}\,\, (\forall C \in \R) \,(\#\{a_\lambda \neq 0 \, | \, \lambda \leq C\} < \infty) \right\}.\]
By this finiteness condition on $\Lambda^{\K, \Gamma}$, there exists a well-defined valuation $\nu: \Lambda^{\K, \Gamma} \to \R \cup \{\infty\}$ which simply takes the minimal exponent $\lambda$ of any element in $\Lambda^{\mathcal K, \Gamma}$. 

\begin{dfn} \label{dfn-filter-cpx} We call $(C_*, \partial_C, \ell_C)$ a {\it filtered complex over $\Lambda^{\K, \Gamma}$} if $(C_*, \partial_C)$ is a chain complex and $C_k$ is a finite dimensional vector space over $\Lambda^{\K, \Gamma}$ for each degree $k \in \Z$. Moreover, $\ell_C: C_* \to \R \cup \{-\infty\}$ is a function that satisfies (i) the non-Archimedean triangle inequality; (ii) $\ell_C( \partial_C x) < \ell_C(x)$ and (iii) $\ell_C(\lambda x) = \ell_C(x) - \nu(\lambda)$ for any $x \in C$ and $\lambda \in \Lambda^{\K, \Gamma}$. \end{dfn}

On the level of chain complexes, we can compare two filtered complexes over the same coefficient field $\Lambda^{\K, \Gamma}$. The following definition is taken from Definition 1.3 in \cite{UZ16}. 

\begin{dfn} \label{dfn-qe} Let $(C_*,\partial_C,\ell_C)$ and $(D_*,\partial_D,\ell_D)$ be filtered complexes over $\Lambda^{\K, \Gamma}$, and $\delta\geq 0$.  A $\delta$-{\it quasiequivalence} between $C_*$ and $D_*$ is a quadruple $(\Phi,\Psi,K_1,K_2)$ where:
\begin{itemize} 
\item[(1)] $\Phi\co C_*\to D_*$ and $\Psi\co D_*\to C_*$ are chain maps, with $\ell_D(\Phi c)\leq \ell_C(c)+\delta$ and $\ell_C(\Psi d)\leq \ell_D(d)+\delta$ for all $c\in C_*$ and $d\in D_*$.
\item[(2)] $K_1\co C_*\to C_{*+1}$ and $K_2\co D_*\to D_{*+1}$ obey the homotopy equations $\Psi\circ\Phi-\mathds{1}_{C_*}=\partial_CK_1+K_1\partial_C$ and $\Phi\circ\Psi-\mathds{1}_{D_*}=\partial_DK_2+K_2\partial_D$, and for all $c\in C_*$ and $d\in D_*$ we have $\ell_C(K_1c)\leq \ell_C(c)+2\delta$ and $\ell_D(K_2d)\leq \ell_D(d)+2\delta$.
\end{itemize}
Thus we call $(C_*,\partial_C,\ell_C)$ and $(D_*,\partial_D,\ell_D)$ are $\delta$-{\it quasiequivalent} if there exists a $\delta$-quasiequivalence between them. 
\end{dfn}

\begin{ex} \label{ex-filtered-complex} Recall that the Hofer norm of any function $H \in C^{\infty}(\R/\Z \times M)$ is defined by $||H||_{H} := \int_0^1 \left(\max_M H_t - \min_M H_t\right) \,dt$. Then a Floer chain complex, denoted by $(\CF_*(M, J, H, \omega), \partial_{J, H, \omega}, \ell_{H, \omega})$, is a filtered complex over a Novikov field. Moreover, for two pairs $(J_{-}, H_{-})$ and $(J_{+}, H_{+})$, the Floer chain complexes $(\CF_*(M, J_+, H_+, \omega), \partial_{J_+, H_+, \omega}, \ell_{H_+, \omega})$ and $(\CF_*(M, J_-, H_-, \omega), \partial_{J_-, H_-, \omega}, \ell_{H_-, \omega})$ are $||H_+ - H_-||_{H}$-quasiequivalent.\end{ex}

To justify Example \ref{ex-filtered-complex}, we elaborate three objects - the generators of $\CF_*(M, J, H, \omega)$, the construction of $\partial_{J, H, \omega}$ and the constructions of homotopy $K_1$ and $K_2$ from Definition \ref{dfn-qe}.  Given a smooth function $H \in C^{\infty}(\R/\Z \times M)$ on a closed symplectic manifold $(M, \omega)$, the Hamiltonian flow $\phi_H^t$ comes from the differential equation
\[ \frac{d\phi_t}{dt}  = X_H  \circ \phi_t \,\,\,\,\,\,\mbox{where} \,\,\,\,\,\,\omega(\cdot, X_H) = d(H(t, \cdot)). \]
The generators of $\CF_*(M, J, H, \omega)$ are Hamiltonian contractible loops $\gamma: \R/\Z \to M$ where $\gamma(t) = \phi_H^t (\gamma(0))$. With a non-degeneracy condition, there are only finitely many such generators. Set $\Gamma_{\omega} = \left\{ {\rm{Im}}[\omega]: H^S_2(M) \to \R \right\} \leq \R$ where $H^S_2(M)$ is the image of $\pi_2(M)$ in $H_2(M; \Z)/{\rm Tor}$ under Hurewicz map $\iota: \pi_2(M) \to H_2(M; \Z)$, then 
\[ \CF_*(M, J, H, \omega) : = {\rm span}_{\Lambda^{\K, \Gamma_{\omega}}} \left< \mbox{Hamiltonian contractible loops} \right> .\]

The grading of each generator is given by Conley-Zehnder index $\mu_{CZ, \omega}$. Its explicit definition can be found in \cite{RS93}. Moreover, since each generator $\gamma$ is a contractible loop, we can {\it fix} a disk $w$ spanning $\gamma$ and assign a value to each pair $(\gamma, w)$ via the symplectic action functional defined as, 
\begin{equation} \label{action-ori}
 \mathcal A_{H, \omega}((\gamma, w)) = -\int_{D^2} w^* \omega + \int_0^1 H(t, \gamma(t)) dt.
 \end{equation}
Then define $\ell_{H, \omega}: \CF_*(M, J, H, \omega) \to \R \cup \{-\infty\}$ by 
\begin{equation} \label{dfn-filtration}
\ell_{H, \omega} \left(\sum_i \lambda_i (\gamma_i, w_i) \right) = \max_i \{\mathcal A_{H, \omega} (\gamma_i, w_i) - \nu(\lambda_i)\}. 
\end{equation}
Conventionally, to simplify the discussion we usually replace the pair $(\gamma, w)$ with an equivalence class $[\gamma, w]$. We choose to define that $(\gamma, w)$ is equivalent to $(\tau, v)$ if and only $\gamma(t) = \tau(t)$ and $[w \# (-v)]$ is homologically trivial. Note that the symplectic action functional $\mathcal A_{H, \omega}$ and Conley-Zehnder index $\mu_{CZ, \omega}$ are both well-defined over $[\gamma, w]$. Hence $\ell_{H,\omega}$ is well-defined over any linear combination of the equivalence classes. 

\begin{remark} In most literature, the equivalence relation between $(\gamma, w)$ and $(\tau, v)$ is weaker than what is given above. Explicitly, $(\gamma, w)$ is equivalent to $(\tau, v)$ if and only if $\gamma(t) = \tau(t)$ and $[w \# (-v)] \in \ker([\omega]) \cap \ker(c_1)$ where $c_1$ is the first Chern class of $(M, \omega)$. Since later in this paper the perturbations of symplectic structures are considered, we need an equivalence relation that is independent of symplectic structures. \end{remark}

The Floer boundary operator $\partial_{J, H, \omega}: \CF_*(M, J, H, \omega) \to \CF_{*-1}(M, J, H, \omega)$ is defined by counting solutions (modulo $\R$-translation) of the partial differential equation 
\begin{equation} \label{dfn-boundarymap}
 \frac{\partial u}{\partial s} + J_t(u(s,t)) \left( \frac{\partial u}{\partial t} - X_H(t, u(s,t)) \right) =0, 
 \end{equation} 
where $\{J_t\}_{0 \leq t \leq 1}$ is a family of almost complex structures that are compatible with $\omega$ and $u(s,t): \R \times \R/\Z \to X$ is such that  \begin{itemize}
\item{} $u$ has finite energy $E(u) = \int_{\R \times \R/\Z} \left|\frac{\partial u}{\partial s} \right|^2 dtds$;
\item{} $u$ has asymptotic condition $u(s,\cdot) \to \gamma_{\pm} (\cdot)$ as $s \to \pm \infty$;
\item{} $\mu_{CZ}([\gamma_-,w_-]) -  \mu_{CZ}([\gamma_+,w_+]) =1$ and $[\gamma_+, w_+] = [\gamma_+, w_- \#u]$.
\end{itemize} 
The celebrated Gromov compactness theorem guarantees that $\partial_{J,H,\omega}$ is well-defined over $\Lambda^{\mathcal K, \Gamma_{\omega}}$. Moreover, Floer chain complex $(\CF_*(M, J, H, \omega), \partial_{J, H, \omega})$ defines a homology $\HF_*(M, J, H, \omega)$ called Floer homology which only depends on the manifold $M$ itself, so is denoted as $\HF_*(M)$. Explicitly, up to a degree shift, one gets that $\HF_*(M) \simeq H_*(M,\K) \otimes_{\mathcal K} \Lambda^{\K, \Gamma_{\omega}}$ where the right hand side is called the quantum homology of $M$ and denoted as $\QH_*(M, \omega)$. A standard way to prove this isomorphism is via PSS-map, denoted as $PSS_*$. For its explicit construction, see \cite{PSS96}. 

Finally, given two different pairs $(J_{-}, H_{-})$ and $(J_{+}, H_{+})$, consider a homotopy $(\mathcal J, \mathcal H)$ where $\mathcal H = \mathcal H_s = \mathcal H(s,t,x): \R \times \R/\Z \times M \to \R$ between $H_{-}(t,x)$ and $H_{+}(t,x)$ in the form of 
\[ \mathcal H(s, t, x) = (1- \alpha(s)) H_{-}(t,x) + \alpha(s) H_{+}(t,x), \]
where $\alpha(s)$ is a cut-off function, i.e., $\alpha(s) = 0$ for $s \in (-\infty, 0]$, $\alpha(s) = 1$ for $s \in [1, \infty)$ and $0\leq \alpha'(s) \leq 1$ for $s \in (0,1)$; $\mathcal J = \mathcal J_s$ is a homotopy (compatible with $\omega$) between $J_-$ and $J_{+}$ by a cut-off function, too. Then similarly to the boundary operator $\partial_{J, H, \omega}$, the construction of continuation map $\Phi$ is by counting solutions $u(s,t): \R \times \R/\Z \to M$ of a parametrized pseudoholomorphic equation 
\begin{equation} \label{continuation-map}
\frac{\partial u}{\partial s} + \mathcal J_s(u(s,t)) \left( \frac{\partial u}{\partial t} - X_{\mathcal H_s}(t, u(s,t)) \right)  =0 
\end{equation}
such that  \begin{itemize}
\item{} $u$ has finite energy $E(u) = \int_{\R \times \R/\Z} \left|\frac{\partial u}{\partial s} \right|^2 dtds$;
\item{} $u$ has asymptotic condition $u(s,\cdot) \to \gamma_{\pm} (\cdot)$ as $s \to \pm \infty$;
\item{} $\mu_{CZ}([\gamma_-,w_-]) -  \mu_{CZ}([\gamma_+,w_+]) =0$ and $[\gamma_+, w_+] = [\gamma_+, w_- \#u]$.
\end{itemize} 
There are two well-known facts (\cite{Sal97}). One is that $\Phi$ is a chain map by a gluing argument; the other is that, for any other homotopy $(\mathcal J', \mathcal H')$, the associated chain map $\Phi'$ is chain homotopic to $\Phi$. Here we give some details on the shift of symplectic actions. Since the symplectic structures are the same, for brevity, denote the symplectic action functional by $\mathcal A_{H}$ if the Hamiltonian function is $H$. The standard computation goes as follows.
\begin{align*} 
\mathcal A_{H_+}([\gamma_+, w_+]) - \mathcal A_{H_-}([\gamma_-, w_-]) & = \int_{-\infty}^{\infty} \frac{d}{ds} \mathcal A_{\mathcal H_s}([u(s,\cdot), (w_- \#u)(s,\cdot)]) ds \\
&= -E(u) + \int_{-\infty}^{\infty}\int_0^1 \alpha'(s) (H_+ - H_-) (t, u(s,t))dtds \\
& \leq -E(u) + \int_0^1 \max_M (H_+ - H_-) (t ,u(s, t))dt\\
& \leq  \int_0^1 \max_M (H_+ - H_-) dt.
\end{align*}
Strictly speaking, to pass from the upper bound $\int_0^1 \max_M (H_+ - H_-) dt$ to $||H_+ - H_-||_H$, we need to normalize $H_+$ and $H_-$ so that both have mean values zero over $M$ (and this can be done simply by a constant shift due to our assumption that $M$ is closed). This implies that $\int_0^1 \min_M (H_+ - H_-)dt \leq 0$ and then $\int_0^1 \max_M (H_+ - H_-) dt  \leq ||H_+ - H_-||_H$.
 
 \subsection{Some symplectic invariants}
\subsubsection{Spectral invariant}
The filtration $\ell_{H, \omega}$ defined in (\ref{dfn-filtration}) can be used to define a measurement for elements in $\HF_*(M)$, and the outcomes of this measurement are called  {\it spectral invariants}.
\begin{dfn}
For any $a \in \QH_*(M, \omega)$, define its spectral invariant with respect to the Hamiltonian system $(M, \omega, H)$ by 
\[ \rho(a, H; \omega) := \inf \{\ell_{H, \omega} (\alpha) \,| \, \alpha \in CF_*(M, J, H, \omega) \,\,\,\mbox{with} \,\,\, [\alpha] = PSS_*(a) \}. \]
\end{dfn}
Recall that $PSS_*$ is the well-known isomorphism from $\QH_*(M, \omega)$ and $\HF_*(M)$. It is easy to see that spectral invariants are independent of the almost complex structures. Also, they enjoy many useful properties. The following result will be used later, which is Theorem 1.4 in \cite{Ush08}.

\begin{theorem} \label{realization thm} Let $(M, \omega, H)$ be a Hamiltonian system. Then for any $a \in \QH_*(M, \omega)$, there exists some $\alpha \in \CF_*(M, J, H, \omega)$ such that $[\alpha] = PSS_*(a)$ and $\rho(a, H; \omega) = \ell_{H, \omega}(\alpha)$. In other words, define $\S(H, \omega) : = \{\ell_{H, \omega}(\alpha) \,| \, \alpha \in \CF_*(M, J, H, \omega)\}$, then $\rho(a, H; \omega)  \in \S(H, \omega)$. \end{theorem}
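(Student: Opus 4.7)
The plan is to realize $\rho(a,H)$ as $\ell_H(\alpha)$ for some cycle $\alpha$ representing $PSS_*(a)$, by combining the non-archimedean structure of the filtered complex with the discreteness of the action spectrum. First, I would fix any cycle $\alpha_0$ with $[\alpha_0] = PSS_*(a)$, which exists because $PSS_*$ is an isomorphism; every other such cycle then has the form $\alpha_0 + \partial\beta$ for some $\beta \in CF_{*+1}(M,J,H,\omega)$, so the problem reduces to showing that
$$\inf\{\ell_H(\alpha_0 + \partial\beta) \,|\, \beta \in CF_{*+1}(M,J,H,\omega)\}$$
both equals $\rho(a,H)$ and is attained.

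Next, I would exploit the fact that, by the definition of $\ell_H$ in (\ref{dfn-filtration}), every finite value $\ell_H(c)$ automatically lies in the action spectrum $\S(H)$ (with $-\infty$ also possible when $c=0$). The Novikov-finiteness condition built into the definition of $CF_*$ forces $\S(H)$ to intersect every bounded interval in at most countably many points, and in the non-degenerate case to be a closed, nowhere-dense subset of $\R$. Consequently any candidate infimum above is either attained at some cycle or is approached from above by points of $\S(H)$; the task is to rule out the latter.

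The main step is a best-approximation argument in the spirit of Lemma 2.4 of \cite{Ush08}. Starting from a sequence $\beta_n \in CF_{*+1}$ with $\ell_H(\alpha_0 + \partial\beta_n) \searrow \rho(a,H)$, I would iteratively improve each $\beta_n$ by cancelling, as far as possible, the top-action generators appearing in $\alpha_0 + \partial\beta_n$: given a leading term $c[\gamma,w]$, one searches for $\beta'$ whose boundary eliminates it, which is possible precisely when the term represents a boundary in the filtered sense. Completeness of the Novikov field $\Lambda^{\K,\Gamma}$ under its non-archimedean valuation $\nu$, combined with closedness of the image $\partial(CF_{*+1})$ under the topology induced by $\ell_H$, then lets one assemble the successive corrections into a limit $\beta_\infty$ satisfying $\ell_H(\alpha_0 + \partial\beta_\infty) = \rho(a,H)$. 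Since $\ell_H$ only takes values in $\S(H) \cup \{-\infty\}$, the value $\rho(a,H)$ lies in $\S(H)$ automatically, giving the second conclusion.

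The hard part will be verifying the closedness of $\partial(CF_{*+1})$ in the filtration topology and justifying the inductive cancellation procedure: the subtlety is that infinitely many cappings of each periodic orbit appear in $CF_*$, so that an apparently finite top-term at each stage may be resurrected by boundary interactions coming from cappings at lower action, and finite-dimensional intuition breaks down. This is exactly the content of Usher's Lemmas 2.1 and 2.4 in \cite{Ush08}, and it is precisely this non-archimedean ``fixed point plus best approximation'' package that the present paper revisits in its appendix in order to extend the realization property to the variant Novikov ring $\Lambda_{[0,1]}$.
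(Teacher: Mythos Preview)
The paper does not give its own proof of this theorem: it is quoted verbatim as Theorem 1.4 in \cite{Ush08} and used as a black box. Your outline is a faithful sketch of Usher's original argument in \cite{Ush08}, and you correctly identify that the non-archimedean ``fixed point / best approximation'' package (Lemmas 2.1 and 2.4 there) is the engine, and that the Appendix of the present paper revisits exactly this machinery in order to push the realization property through to the rank-$2$ valuation on $\Lambda_{[0,1]}$. So there is nothing to compare against; your proposal matches the cited source.
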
 

\subsubsection{Boundary depth}
Boundary depth is defined on the chain complex level. 
\begin{dfn} For a Floer chain complex $(\CF_*(M, J,H, \omega), \partial_{J, H, \omega}, \ell_{H, \omega})$, define its boundary depth by
\[ \beta(H; \omega) := \sup_{x \in {\rm{Im}} \partial_{J, H, \omega}} \inf \left\{ \ell_{H, \omega}(y) - \ell_{H, \omega}(x) \,| \, \partial_{J, H, \omega} y = x \right\}. \]
More generally, the same formula as above defines the boundary depth $\beta$ for any filtered complex $(C, \partial_C, \ell_C)$. 
\end{dfn}
By Remark 3.3 in \cite{Ush11}, boundary depth is independent of the almost complex structures. What needs to be emphasized is that though boundary depth is even well-defined on $\Ham(M, \omega)$ by Corollary 5.4 in \cite{Ush13}, we still use the notation $\beta(H; \omega)$. The main reason is that the process that passes from a Hamiltonian function $H$ to its corresponding Hamiltonian diffeomorphism $\phi = \phi_H^1 \in \Ham(M, \omega)$ depends on the symplectic structure $\omega$. Similarly to spectral invariants, boundary depth also enjoys many useful properties. The following ones will be used later. They are Theorem 7.4 in \cite{Ush13} and Proposition 3.8 in \cite{Ush13}, respectively. 

\begin{theorem} \label{thm-bd-re} For any Floer chain complex $(\CF_*(M, J, H, \omega), \partial_{J, H, \omega}, \ell_{H, \omega})$, there exists $y \in \CF_*(M, J, H, \omega)$ such that $\beta(H; \omega) = \ell_{H, \omega}(y) - \ell_{H, \omega}(\partial y).$ In other words, $\beta(H; \omega) \in \S^{\Delta}(H, \omega): = \{ s- t\,| \, s, t \in \S(H, \omega)\}$. \end{theorem}

\begin{theorem} \label{bd-qe} Let $(C_1, \partial_1, \ell_1)$ and $(C_2, \partial_2, \ell_2)$ be two filtered complexes. Denote their boundary depths by $\beta_1$ and $\beta_2$, respectively. If these two filtered chain complexes are $\delta$-quasiequivalent, then $|\beta_1 - \beta_2|\leq \delta$. \end{theorem} 

\subsubsection{(Partial) symplectic quasi-state}
In general, any stable homogenous quasi-morphism (\cite{EP03}, \cite{EP08}) induces a quasi-state, i.e., a functional $\zeta: C^{\infty}(M) \to \R$ satisfying the following properties:
\begin{itemize}
\item If $\{F, G\}=0$, then $\zeta(H+aG) = \zeta(H) + a\zeta(G)$ for any $a \in \R$.
\item If $H \leq G$, then $\zeta(H) \leq \zeta(K)$.
\item $\zeta(1) = 1$. 
\end{itemize}
In particular, if we use {\it spectral quasi-morphism} which is constructed from spectral invariants (\cite{EP03}), we can get (partial) symplectic quasi-states. More directly, 
\begin{dfn} \label{dfn-pqs} For any $a \in \QH_*(M, \omega)$, define a function $\zeta_a(-; \omega): C^{\infty}(M) \to \R$ by 
\[ \zeta_a(H; \omega) = \lim_{k \to \infty} \frac{\rho(a, kH; \omega)}{k}.\]
This function is called the (partial) symplectic quasi-state associated to the class $a$. \end{dfn}
Symplectic quasi-states are powerful tools to study the rigidity of intersections of subsets in a symplectic manifold. The closely related concepts are \emph{heavy subset} and \emph{superheavy subset}. 

\begin{dfn} \label{hsh2} For a given $a \in \QH_*(M, \omega)$, we call a closed subset $X \subset M$ {\it $a$-heavy} if $\zeta_a(H; \omega) \geq \inf_X H$ for all $H \in C^{\infty}(M)$ and {\it $a$-superheavy} if $\zeta_a(H; \omega) \leq \sup_X H$ for all $H \in C^{\infty}(M)$. \end{dfn}

To end this section, we emphasize that the quasiequivalence conclusion for Floer chain complexes in Example \ref{ex-filtered-complex} readily implies that all three symplectic invariants introduced above satisfy 1-Lipschitz continuity under the perturbations of Hamiltonian functions. Here we summarize them into the following theorem, and they are from (5) in Theorem I in \cite{Oh05}, (iii) in Theorem 1.4 in \cite{Ush13} and Theorem 3.2 in \cite{Ent14}, respectively. 
\begin{theorem}\label{Lip}We have the following continuity results, where $||-||_{H}$ is the Hofer norm (cf. Example \ref{ex-filtered-complex}).
\begin{itemize} 
\item[(a)]  Given any $a \in \QH_*(M, \omega)$, for any $H, G \in C^{\infty}(\R/\Z \times M)$, we have 
\begin{equation*} 
 |\rho(a, H; \omega)- \rho(a, G; \omega)| \leq ||H-G||_H. 
 \end{equation*}
\item[(b)] For any $H, G \in C^{\infty}(\R/\Z \times M)$, we have 
\begin{equation*} 
 |\beta(H; \omega) - \beta(G; \omega)| \leq ||H-G||_H. 
 \end{equation*}
\item[(c)]  Given any idempotent element $a \in \QH_*(M, \omega)$, for any $H, G \in C^{\infty}(M)$, 
\begin{equation*} 
\min_{M}(H-G) \leq \zeta_a(H; \omega) - \zeta_a(G; \omega) \leq \max_{M} (H-G). 
\end{equation*}
\end{itemize}
\end{theorem}

\section{Reduced perturbation} \label{simple}

Given a Hamiltonian system $(M, \omega, H)$, denote by ${\P}(\omega, H)$ the collection of all non-constant geometrically distinct Hamiltonian 1-periodic orbits, where $x(t)$ denotes a generic element in ${\P}(\omega, H)$. Since $H$ is non-degenerate and $M$ is assumed to be closed, ${\P}(\omega, H)$ contains only finitely many elements. Consider the following subset of $\Omega^2_{\tiny{\mbox{closed}}}(M)$, 
\begin{equation} \label{dfn-space-rp} \Omega_{\omega, H} = \left\{\alpha \in \Omega^2_{\tiny{\mbox{closed}}}(M) \,\bigg|\,\begin{array}{cc} \mbox{ $\alpha$ vanishes in a neighborhood} \\ \mbox{of each $x(t) \in {\P}(\omega, H)$} \end{array}\right\}. 
\end{equation}
This leads to the following definition. 
\begin{dfn} \label{dfn-rp} Given a Hamiltonian system $(M, \omega, H)$, we call a perturbation $\omega'$ {\it a reduced perturbation of $\omega$} if $\omega' = \omega + \alpha$ for some $\alpha \in \Omega_{\omega, H}$. \end{dfn}

In this section, we will explain how any perturbation of a given symplectic structure $\omega$ can be reduced to a reduced perturbation as defined in Definition \ref{dfn-rp}. Explicitly, we have the following result.  

\begin{prop} \label{re-sim-set-up} Let $(M, \omega, H)$ be a Hamiltonian system. If $\omega'$ is a perturbation of $\omega$, then there exists a diffeomorphism $\phi \in {\D}(M)$ such that $\phi^*\omega' - \omega \in \Omega_{\omega, H}$ and $d_{C^0}(\phi, \mathds{1}_M) \leq C |\omega' - \omega|$ for some constant $C$ that does not depend on $\omega'$.\end{prop}

Here let us elaborate on the measurement $|-|$ on the closed 2-forms in Proposition \ref{re-sim-set-up}. It is well-known that for any $k \in \Z$, we can associate the {\it $k$-norm} on the space of closed differential $n$-forms $\Omega_{\tiny{\rm closed}}^n(M)$ for any $n \in \Z$. Explicitly, fixing a local chart $\{(U_i, \phi_i)\}_{i=1}^m$ of $M$, any $\alpha \in \Omega^n(M)$ can be locally expressed as 
\[ (\phi_i^{-1})^* (\alpha|_{U_i}) = \sum_{(s,t) \in \{1, ..., 2n\} \times \{1, ..., 2n\} } f_{i,s,t} dx_s \wedge dx_t\]
where $\{x_1, ..., x_{2n}\}$ is the coordinate of $\R^{2n}$ and $f_{i,s,t}: \phi_i(U_i) \to \R$. Then define the $k$-norm
\[ ||\alpha||_k : = \max_{i\in \{1, ..., m\}} \max_{(s,t) \in \{1, ..., 2n\} \times \{1, ..., 2n\} } \max_{l \leq k} ||f_{i,s,t}||_{C^l}\]
where $||-||_{C^l}$ is the standard $C^l$-norm defined over the function space. In particular, $(\Omega^n_{{\tiny{\rm closed}}}(M), ||\cdot||_k)$ is a normed vector space. Moreover, there exists a sequence of positive real numbers $\vec{\ep}= (\ep_k)_{k \geq 0}$ such that under the {\it $\vec{\ep}$-norm} defined by $||\alpha||_{\vec{\ep}} := \sum_{k \geq 0} \ep_k || \alpha||_k$, the space $\Omega^n_{{\tiny{\rm closed}}}(M)_{\vec{\ep}} = \{ \alpha \in  \Omega^n_{{\tiny{\rm closed}}}(M) \,| \, ||\alpha||_{\vec{\ep}} < \infty \}$ is a complete normed vector space under $||-||_{\vec{\ep}}$. For brevity, we denote $||-||_{\vec{\ep}}$ as $|-|$. In a similar way, we can define a semi-norm on $H_{dR}^n(M; \R)$ as follows. Given any $a \in H^n_{dR}(M; \R)$, define $|a|_h = \inf \{ |\alpha|  \,| \, [ \alpha ] = a\}$.

\begin{remark} Due to the $\vec{\ep}$-norm, our proposed space $\Omega_{\omega, H}$ defined in (\ref{dfn-space-rp}) should be modified to be $\Omega_{\omega, H} \cap \Omega^2_{{\tiny{\rm closed}}}(M)_{\vec{\ep}}$. For brevity, we still use notation $\Omega_{\omega, H}$ as well as the corresponding definition of the reduced perturbations in Definition \ref{dfn-rp}. \end{remark}
 
Before giving the proof of Proposition \ref{re-sim-set-up}, we want to explain how we use  it. In order to compare the symplectic invariants under different symplectic structures, we need to compare their associated Floer chain complexes, that is, $(\CF_*(M, J, H, \omega), \partial_{J, H, \omega})$ and $(\CF_*(M, J, H, \omega'), \partial_{J, H, \omega'})$ where $\omega'$ is a perturbation of $\omega$. We can simplify this procedure by taking the $\phi$ concluded from Proposition \ref{re-sim-set-up} and inserting two intermediate steps, 
\[ \xymatrix{
    \CF_*(M, J, H, \omega) \ar@{-}[r]^-{(1)} \ar@{--}[d]_-{(4)} & \CF_*(M, J, H, \omega') \ar@{--}[d]^-{(2)} \\
     \CF_*(M, J, H, \phi^*\omega') \ar@{--}[r]_-{(3)}  & \CF_*(M, \phi^*J, \phi^*H, \phi^* \omega')
    }\]
where (1) is the desired comparison. Observe that (2) will not change spectral invariants, boundary depth, and (partial) symplectic quasi-states. Moreover, (3) only results in a small difference due to the second conclusion of Proposition \ref{re-sim-set-up} and 1-Lipschitz continuities from Theorem \ref{Lip}. In other words, the original comparison (1) can be replaced by (4) if we forgive the small defect from (3). Hence, by the first conclusion of Proposition \ref{re-sim-set-up}, we only need to consider the reduced perturbations. Moreover, if $\omega'$ is any reduced perturbation of $\omega$, the following proposition says that the generators of $\CF_*(M, J, H, \omega)$ and $\CF_*(M, J, H, \omega')$ are the same, and also their degrees are the same. However, their boundary operators are different in general. 

\begin{prop} \label{no-other-orbits} Let $(M, \omega, H)$ be a Hamiltonian system. Then for any reduced perturbation $\omega'$ of $\omega$, ${\P}(\omega, H) = {\P}(\omega', H)$. Moreover, $\mu_{CZ, \omega}(x(t)) = \mu_{CZ, \omega'}(x(t))$ for each contractible Hamiltonian 1-periodic orbit $x(t) \in {\P}(\omega, H)$. \end{prop}

\begin{proof} By definition, since $\omega'$ coincides with $\omega$ on $\cup_i U_i$ where $U_i$ is a neighborhood of $x_i(t) \in \P(\omega, H)$, ${\P}(\omega', H)$ has at least as many Hamiltonian 1-periodic orbits as ${\P}(\omega, H)$ has. Next, we claim that there are no other orbits in ${\P}(\omega', H)$ outside $\cup_i U_i$. Without loss of generality, we assume ${\P}(\omega,H)$ consists of only one element $x(t)$ with a neighborhood $U$. We will prove our claim by contrapositive. Suppose there exists a sequence of symplectic structures $\{\omega_\frac{1}{n}\}$ approaching to $\omega$ and for each $\omega_\frac{1}{n}$, there exists some $z_n(t)$ such that $z_n(t) \in {\P}(\omega_\frac{1}{n}, H)$ and $z_n(t) \subset M \backslash U$. Since $M$ is compact, by Arzel\`a-Ascoli theorem, passing to a subsequence, $z_n(t)$ converges to some $z_0(t)$ in $M \backslash U$. Because $X^{\omega_{1/n}}_H$ converges to $X^{\omega}_H$ uniformly on $M$, it is easy to check $z_0(t)$ is indeed a Hamiltonian 1-periodic orbit under $\omega$. Thus we get a contradiction. 

For the second conclusion, fix a disk $w$ spanning $x(t)$, then one gets a trivialization $\Psi: w^*TM \to D^2 \times \R^{2n}$. Restricting to the neighborhood of $\partial D^2$, one gets a symplectic path $\gamma_{\Psi}(t)$ by the relation $(\Psi \circ d\phi_{H, \omega}^t \circ \Psi^{-1})(t,\vec{v}) = (t, \gamma_{\Psi}(t) \vec{v})$. Meanwhile, the defining property of a reduced perturbation implies flow $\phi_{H, \omega}^t |_U= \phi_{H, \omega'}^t|_U$. Since Conley-Zehnder indices are computed from the same symplectic path, they have the same indices as desired. \end{proof}

Now, let us give the proof of Proposition \ref{re-sim-set-up}.

\begin{proof} [Proof of Proposition \ref{re-sim-set-up}] For each orbit $x_i(t) \in \P(\omega, H)$, take a neighborhood $U_i$ of $x_i(t)$ such that $U_i$ deformation retracts to $x_i(t)$. This can be done by taking a union of sufficiently small Darboux neighborhoods of the points on $x_i(t)$. Moreover, if it is necessary we shrink $U_i$ so that $U_i \cap U_j = \emptyset$ whenever $i \neq j$. Consider the long exact sequence 
\begin{equation} \label{les}
\cdots \rightarrow H_{dR}^2(M, \cup_i U_i; \mathbb R) \xrightarrow{\iota_*} H_{dR}^2(M; \mathbb R) \rightarrow H_{dR}^2(\cup_i U_i; \mathbb R) \rightarrow \cdots
\end{equation}
where $\iota: \Omega^2_{\tiny{\rm closed}}(M, \cup_i U_i) \to \Omega^2_{\tiny{\rm closed}}(M)$ is the inclusion. By our choice of the neighborhood $U_i$, $\cup_i U_i$ deformation retracts to $\sqcup_i x_i(t)$, so by a dimension consideration, $H_{dR}^2(\cup_i U_i; \mathbb R)=0$. Now, let
\[ H_{dR}^2(M; \mathbb R) = \bigoplus_{j=1}^m \mathbb R \cdot c_j \,\,\,\,\mbox{for some $c_j \in H^2_{dR}(M; \R)$}. \]
Then $H_{dR}^2(\cup_i U_i; \mathbb R) =0$ implies that for each basis element $c_j$, there exists some $\alpha_j \in \Omega_{\tiny{\rm closed}}^2(M, \cup_i U_i)$ such that $c_j = \iota_*[\alpha_j]$. By definition, $\alpha_j$ vanishes in every $U_i$. Meanwhile, for $\omega$ and its perturbation $\omega'$, 
\[ [\omega'] - [\omega] = \sum_{j=1}^m t_j c_j = \sum_{j=1}^m t_j \iota_*[ \alpha_j] \,\,\,\,\,\mbox{for some}\,\,\,\, (t_1, ..., t_m) \in \mathbb R^m. \]
Therefore, $[\omega_1 - \omega_0] = [\sum_{j=1}^m t_j \iota(\alpha_j) ]$. Moreover, by the definition of semi-norm $|-|_h$ on $H^*_{dR}(M; \R)$, for any $\delta>0$, there exists an exact $2$-form $d\theta$ such that 
\[ \alpha:= \sum_{j=1}^m t_j\iota(\alpha_j) + d\theta \]
and $|\alpha| \leq |[ \alpha]|_h + \delta = |[\omega' - \omega]|_h + \delta \leq |\omega' - \omega| + \delta$. We can choose $\delta$ such that $|\omega' - \omega| + \delta \leq C_1|\omega' - \omega|$ for any preferred constant $C_1 >1$, then
\begin{equation} \label{small-gap}
|\alpha| \leq C_1|\omega'- \omega|. 
\end{equation} 
Moreover, by (\ref{les}) again we know $d\theta = \iota(d\gamma)$ for some $\gamma \in \Omega^2(M, \cup_i U_i)$. Therefore, the $\alpha$ chosen above vanishes near every Hamiltonian 1-periodic orbit, that is, $\alpha \in \Omega_{\omega, H}$. 

Next, consider the homotopy $h_t = (1-t) (\omega + \alpha) + t \omega'$ where $t \in [0,1]$. Note that
\[ \left[ \frac{dh_t}{dt} \right] = [\omega' - \omega - \alpha] = [\omega' - \omega] - [ \alpha] = [\alpha] - [\alpha] = 0. \]
Therefore, $h_t$ represents the same cohomology class for each $t \in [0,1]$, then by Moser's trick, there exists some $\phi \in {\rm Diff}(M)$ such that 
\[ \phi^*\omega' = \omega + \alpha. \]
More explicitly, $\phi$ is the time-one map of the flow $\phi_t$ of vector field $X_t$ defined as 
\[ h_t(X_t, -) = - \tau \,\,\,\,\,\,\mbox{and} \,\,\,\,\, d\tau = \omega' - (\omega + \alpha).\]
By the triangle inequality, we know 
\[ |d \tau| \leq |\omega' - \omega|+ |\alpha| \leq (1+ C_1)|\omega' - \omega|. \]
Therefore using the dual norm of $\vec{\ep}$-norm on the space of vector fields, we get $|X_t| \leq C_2|\omega' - \omega|$ over $M$ for some constant $C_2$. This implies 
\[ \begin{array}{l}
d_{C^0}(\phi, \mathds{1}_M) = \sup_{x \in M} {\rm dist}(\phi(x), x) \leq C_3 |\omega' - \omega|
\end{array} \]
for some constant $C_3$. Here $C_2$ and $C_3$ involve the integration of forms and smooth vector fields along the manifold $M$.  Because $M$ is assumed to be closed, both constants are finite and only depend on $M$. Thus we get the conclusion by setting $C = C_3$. \end{proof}

\section{Energy estimations}\label{sec-e-e}
Similar to the analysis from the perturbations of Hamiltonian functions (cf. subsection \ref{ss-fcc}), the comparison between two Floer chain complexes with different symplectic structures starts from a homotopy between two symplectic structures. Explicitly, fix a symplectic structure $\omega_0$ and consider a reduced perturbation $\omega_1 = \omega_0 + \alpha$. Take a {\it smooth} cut-off function $\kappa(s)$ such that $\kappa(s) = 0$ for $s \in (-\infty, 0]$, $\kappa(s) =1$ for $s \in [1, \infty)$ and $\kappa'(s)>0$ for $s \in (0,1)$. Define an {\it interpolating homotopy $s \mapsto \omega_s$} between $\omega_0$ and $\omega_1$ by  
\begin{equation} \label{dfn-int-htp}
\omega_s = (1-\kappa(s))\omega_0 + \kappa(s) \omega_1.
\end{equation}
Note that $\omega_s$ is also a reduced perturbation of $\omega_0$ for each $s \in \R$. Now take a family of pairs $(J_s, \omega_s)$ where, for each $s \in \R$, $J_s$ is an $\omega_s$-compatible almost complex structure. They induce a family of Riemannian metrics $g_s (v,w):= \omega(v, J_sw)$. Then we can give the following definition. 
\begin{dfn} \label{def-FO} Given a Hamiltonian system $(M, \omega_0, H)$ and a reduced perturbation $\omega_1$, denote $s \mapsto \omega_s$ as an interpolating homotopy between $\omega_0$ and $\omega_1$. A parametrized Floer operator $\mathcal F^{s}$ is defined as 
\begin{equation}\label{par-FO}
\mathcal F^{s}= \frac{\partial }{\partial s} + J_s \left( \frac{\partial }{\partial t}  -X^{\omega_s}_H\right)  
\end{equation} 
where $X^{\omega_s}_H = J_s \mbox{grad}_{\omega_s} H$ and $\mbox{grad}_{\omega_s}H$ is the gradient of $H$ with respect to the induced metric $g_s$. Moreover, we call a map $u(s,t): \R \times S^1 \to M$ an {\it $\mathcal F^{s}$-trajectory} if $u$ satisfies $\mathcal F^{s}(u) = 0$. \end{dfn}

For each $\mathcal F^s$-trajectory $u$, one defines its energy by
\begin{equation*}
E(u)= \int_{-\infty}^{\infty} \int_0^1  \left| \frac{\partial u}{\partial s} \right|^2_{g_s} dtds.
\end{equation*}

\begin{dfn} \label{dfn-ad-tra}
We call an $\mathcal F^s$-trajectory {\it admissible} if it satisfies the following conditions:
\begin{itemize}
\item[(a)] The energy of $u$ is finite, that is $E(u) < \infty$;
\item[(b)] $u(s,t)$ satisfies the following asymptotic condition, where $\gamma_-(t)$ and $\gamma_+(t)$ are contractible Hamiltonian 1-periodic orbits,
\[ \lim_{s \to -\infty} u(s,t) = \gamma_-(t) \,\,\,\,\,\mbox{and} \,\,\,\,\, \lim_{s \to \infty} u(s,t) = \gamma_+(t);\]
\item[(c)] $[\gamma_+ w_+] = [\gamma_+, w_- \# u]$, where the equivalent class is defined by the relation
\[ [x, v] = [y, w] \,\,\,\,\mbox{if and only if} \,\,\,\,\,x(t) = y(t) \,\,\,\mbox{and}\,\,\, [v \# (-w)] = 0 \in H_2(M; \R). \]
\end{itemize}
\end{dfn}

Note that the symplectic action functional defined in (\ref{action-ori}) depends on symplectic structures. Along this interpolating homotopy $s \mapsto \omega_s$, the corresponding symplectic action functional at $\omega_s$ is 
\begin{equation}\label{action} 
\mathcal A_{H, \omega_s} ([\gamma,w])= - \int_{D^2} w^* \omega_s + \int_0^1H(\gamma(t),t) dt. 
\end{equation}
Since the Hamiltonian function $H$ remains the same in our perturbation discussion, for brevity, we simply denote $\mathcal A_{H, \omega_s}$ by $\mathcal A_{\omega_s}$. Our first main result in this section is the following energy estimation. Recall the notation $\alpha = \omega_1 - \omega_0$.

\begin{prop}\label{ene-est} Suppose that $u: \R \times S^1 \to M$ is an admissible $\mathcal F^s$-trajectory from $[\gamma_-, w_-]$ to $[\gamma_+, w_+]$. Then we have the energy estimation between the symplectic action functionals 
\begin{align*} \label{ene-est-in}
- (1+C|\alpha|) E(u) -  \int_{D^2} (w_-)^* \alpha & \leq  \mathcal A_{\omega_1}([\gamma_+,w_+]) - \mathcal A_{\omega_0}([\gamma_-,w_-]) \\ & \leq -(1-C|\alpha|)E(u) - \int_{D^2} (w_-)^* \alpha 
  \end{align*}
for some positive constant $C$ which is independent of $u$.
\end{prop}

Here let us state a ``local'' version of Proposition \ref{ene-est}. Given an interpolating homotopy between $\omega_0$ and $\omega_1$, for any $s<t$ in $[0,1]$, the following proposition gives an energy estimation with respect to $\omega_s$ and $\omega_t$ along this interpolating homotopy between $\omega_0$ and $\omega_1$. 

\begin{prop} \label{ene-est-cor2}
Suppose that $u: \R \times S^1 \to M$ is an admissible $\mathcal F^s$-trajectory from $[\gamma_-, w_-]$ to $[\gamma_+, w_+]$. Then we have the following energy estimation between the symplectic action functionals, 
\begin{align*}
-(1+C_{s,t} |\alpha|) E(u) + (s-t) \int_{D^2} (w_-)^*\alpha & \leq  \mathcal A_{\omega_t}([\gamma_+,w_+]) - \mathcal A_{\omega_s}([\gamma_-,w_-]) \\ 
& \leq -(1-C_{s,t} |\alpha|)E(u) + (s-t) \int_{D^2} (w_-)^*\alpha 
\end{align*}
for constant $C_{s,t} = (t-s)\cdot C$ where $C$ is the constant from Proposition \ref{ene-est}. \end{prop}

The proof of Proposition \ref{ene-est-cor2} is exactly the same as the proof of Proposition \ref{ene-est}. For simplicity, we only give the proof of Proposition \ref{ene-est}. 

\begin{proof} [Proof of Proposition \ref{ene-est}] First of all,
\begin{align*}
 \mathcal A_{\omega_1}([\gamma_+,w_+]) - \mathcal A_{\omega_0}([\gamma_-,w_-])  & = \int_{-\infty}^{\infty} \frac{d}{ds} \mathcal A_{\omega_s} ([u(s, \cdot), w_- \# u((-\infty, s] \times S^1)]) ds.
 \end{align*}
For any $s \in \R$, denote $w_s = w_- \# u((-\infty, s] \times S^1)$ and topologically it is a disk $D^2$ spanning loop $\{u(s,t)\}_{t \in S^1}$. By the definition of $\omega_s$ and symplectic action functional $\mathcal A_{\omega_s}$, 
 \begin{align*}
\mathcal A_{\omega_s} ([u(s, \cdot), w_- \# u((-\infty, s] \times S^1)]) & = - \int_{D^2} w_s^* \omega_s + \int_0^1 H(u(s,t),t)dt\\
& = - \int_{D^2} w_s^*(\omega_0 + \kappa(s) \alpha) + \int_0^1 H(u(s,t), t)dt \\
& = - \int_{D^2} w_s^* \omega_0 - \kappa(s) \int_{D^2} w_s^* \alpha + \int_0^1 H(u(s,t), t)dt.
\end{align*}
It is easy to check that for any {\it closed} 2-form $\omega$, 
\begin{equation}\label{der-2-form}
\frac{d}{ds} \int_{D^2} w_s^* \omega = \int_0^1 \omega\left(\frac{\partial u}{\partial s}, \frac{\partial u}{\partial t} \right) dt.
\end{equation}
In particular, for closed 2-forms $\omega_0$ and $\alpha$, one gets
\begin{align*}
\frac{d}{ds} \left(  - \int_{D^2} w_s^* \omega_0 - \kappa(s) \int_{D^2} w_s^* \alpha \right) & = - \int_0^1 \omega_0 \left(\frac{\partial u}{\partial s}, \frac{\partial u}{\partial t} \right) dt \\
&  - \kappa'(s) \int_{D^2} w_s^* \alpha - \kappa(s) \int_0^1 \alpha  \left(\frac{\partial u}{\partial s}, \frac{\partial u}{\partial t} \right)dt\\
& = - \kappa'(s) \int_{D^2} w_s^* \alpha - \int_0^1 \omega_s \left(\frac{\partial u}{\partial s}, \frac{\partial u}{\partial t} \right)dt.
\end{align*}
On the other hand,  
\begin{align*}
\frac{d}{ds} \int_0^1 H(u(s,t), t) dt &= \int_0^1 \frac{d}{ds} H(u(s,t),t)dt \\
& = \int_0^1 dH \left( \frac{\partial u}{\partial s} \right) dt =  -\int_0^1 \omega_s \left( X^{\omega_s}_H, \frac{\partial u}{\partial s} \right) dt.
\end{align*}
Moreover, since $u$ satisfies $\mathcal F^{\omega_s}(u) =0$, we know that  the vector field $X^{\omega_s}_H$ is $X^{\omega_s}_H= J_s(u) {\rm grad}_{\omega_s} H = J_s(u) {\textstyle \left(- \frac{\partial u}{\partial s} - J_s(u) \frac{\partial u}{\partial t} \right) }= {\textstyle \frac{\partial u}{\partial t} - J_s(u) \frac{\partial u}{\partial s}}$. Therefore, 
\begin{align*}
-\int_0^1 \omega_s \left( X^{\omega_s}_H, \frac{\partial u}{\partial s} \right) dt & = -\int_0^1 \omega_s \left(\frac{\partial u}{\partial t} - J_s(u) \frac{\partial u}{\partial s}, \frac{\partial u}{\partial s} \right) dt \\
& = \int_0^1 \omega_s \left( \frac{\partial u}{\partial s} , \frac{\partial u}{\partial t} \right)dt - \int_0^1 \left| \frac{\partial u}{\partial s} \right|^2_{g_s} dt.
\end{align*}
Combining these computations, one gets the following equality, 
\begin{equation} \label{est-mid}  \mathcal A_{\omega_1}([\gamma_+,w_+]) - \mathcal A_{\omega_0}([\gamma_-,w_-])  = -E(u) - \int_{-\infty}^{\infty} \kappa'(s) \int_{D^2} w_s^* \alpha \,ds.
\end{equation}
For the second term in (\ref{est-mid}), from an integration by parts, 
\begin{align*}
\int_{-\infty}^{\infty} \kappa'(s) \int_{D^2} w_s^* \alpha \,ds  &= \int_{-\infty}^{\infty} \frac{d}{ds} \left(\kappa(s) \int_{D^2} w_s^* \alpha\right) ds - \int_{-\infty}^{\infty} \kappa(s) \left(\frac{d}{ds}\int_{D^2} w_s^* \alpha \right)ds \\
& = \int_{D^2} (w_- \# u)^* \alpha - \int_{-\infty}^{\infty} \kappa(s) \left(\frac{d}{ds}\int_{D^2} w_s^* \alpha \right)ds \\
&  = \int_{D^2} (w_-)^* \alpha +  \int_{- \infty}^{\infty} \int_0^1(1- \kappa(s)) \alpha \left( \frac{\partial u}{\partial s}, \frac{\partial u}{\partial t}\right) dtds.
\end{align*}
Notice that the last term satisfies the following inequality,
\begin{align*} 
- \int_{- \infty}^{\infty} \int_0^1\left| \alpha \left( \frac{\partial u}{\partial s}, \frac{\partial u}{\partial t}\right)\right| dtds & \leq \int_0^1(1- \kappa(s)) \alpha \left( \frac{\partial u}{\partial s}, \frac{\partial u}{\partial t}\right) dtds \\
& \leq \int_{- \infty}^{\infty} \int_0^1\left| \alpha \left( \frac{\partial u}{\partial s}, \frac{\partial u}{\partial t}\right)\right| dtds.
\end{align*}

Now, we claim that there exist some constant $C'$ and $N$, independent of trajectory $u$ such that 
\begin{equation} \label{a-energy}
\int_{-\infty}^{\infty} \int_0^1 \left| \alpha \left(\frac{\partial u}{\partial s}, \frac{\partial u}{\partial t} \right) \right|dt ds \leq |\alpha| \cdot C' \cdot \frac{E(u)}{\sqrt{N}} + |\alpha| \cdot E(u).
\end{equation}

In fact, using the relation $X^{\omega_s}_H + J_s(u) \frac{\partial u}{\partial s}=  \frac{\partial u}{\partial t}$, what we want to estimate can be rewritten as
\begin{align*}
\int_{-\infty}^{\infty} \int_0^1\left| \alpha \left(\frac{\partial u}{\partial s}, X^{\omega_s}_H + J_s(u) \frac{\partial u}{\partial s} \right) \right|dt ds \leq &\int_{-\infty}^{\infty} \int_0^1\left| \alpha \left(\frac{\partial u}{\partial s}, X^{\omega_s}_H \right)\right|dtds \\
& + \int_{-\infty}^{\infty} \int_0^1 \left|\alpha \left(\frac{\partial u}{\partial s}, J(u) \frac{\partial u}{\partial s} \right) \right|dtds.
\end{align*}
The second term is bounded from above by $|\alpha| \cdot E(u)$. For the first term, by the asymptotic property of $u$ and the definition of $\alpha$ (which vanishes near orbits $\gamma_-(t)$ and $\gamma_+(t)$), we know that there exists some $s_u \in \R$, depending on $u$, such that $\alpha = 0$ when $s \in (-\infty, -s_u] \cup [s_u, +\infty)$. So 
\begin{equation} \label{cut off end}
\int_{-\infty}^{\infty} \int_0^1 \left| \alpha \left(\frac{\partial u}{\partial s}, X^{\omega_s}_H \right)\right| dtds  = \int_{-s_u}^{s_u} \int_0^1 \left|\alpha \left(\frac{\partial u}{\partial s}, X^{\omega_s}_H \right)\right|dtds. 
\end{equation}
Meanwhile, by Lemma 5.2 in \cite{LO95}, there exists some constant positive $N$, independent of $u$, such that for any $s_*\in [-s_u, s_u]$ (enlarge $s_u$ if necessary), 
\[ \left|\partial_s u(s, t)\right|^2_{L^2} \big|_{s =s_*}= \int_0^1  \left| \frac{\partial u} {\partial s}\right|_{s=s_*}^2 dt \geq N. \]
In other words,  if we set 
\[ \mathcal I = \left\{s_* \in \R \,\bigg| \, \left|\partial_s u(s, t)\right|^2_{L^2} \big|_{s =s_*} \geq N \right\}\]
then $[-s_u, s_u]\subset \mathcal I$. So for (\ref{cut off end}), we can improve it to be integrated over $\R/\Z \times \mathcal I$. It will not change the value of the integral by the vanishing property of $\alpha$. Therefore, we have 
\begin{align*}
\int_{-\infty}^{\infty} \int_0^1 \left| \alpha \left(\frac{\partial u}{\partial s}, X^{\omega_s}_H \right)\right| dtds & = \int_{\mathcal I} \int_0^1 \left| \alpha \left(\frac{\partial u}{\partial s}, X^{\omega_s}_H \right)\right| dtds\\
\leq |\alpha| \cdot C' \int_{\mathcal I} \int_0^1 \left| \frac{\partial u}{\partial s} \right|_{g_s} dtds
\end{align*}
where $C'$ is an upper bound of the uniform norm of vector field $X_H^{\omega_s}$ for {\it any} $s \in [0,1]$ on the closed manifold $M$. On the other hand, due to the energy constraint, (Lebesgue) measure of $\mathcal I$ satisfies $\mu(\mathcal I) \leq E(u)/N$. Applying Cauchy-Schwarz inequality, we get 
\[\left( \int_{\mathcal I} \int_0^1 1 \cdot \left| \frac {\partial u}{\partial s} \right|_{g_s} dtds \right)^2 \leq \left( \int_{\mathcal I} \int_0^1 1^2 dtds \right) \cdot \left( \int_{\mathcal I} \int_0^1 \left| \frac{\partial u}{\partial s} \right|_{g_s} ^2 dtds  \right) \leq \frac{E(u)^2}{N}. \]  
Together, we get the desired conclusion by setting constant $C : = \frac{C'}{\sqrt{N}} + 1$. \end{proof}

For later use, we need an energy estimation of another type.  

\begin{dfn} \label{dfn-sym-FO} Fix a finite number $R \in \R$. Choose a ``symmetric'' cut-off function as follows. Let $\kappa(s) = 0$ for $s \in (-\infty, -(R+1)] \cup [R+1, \infty)$ and $\kappa(s) = 1$ for $s \in [-R, R]$. Moreover, $\kappa'(s)>0$ for $s \in (-(R+1), -R)$ and $\kappa'(s)<0$ for $s \in (R, R+1)$. Using this $\kappa(s)$, we can define a {\it symmetric homotopy} $s \mapsto \omega_s$ as in (\ref{dfn-int-htp}) from $\omega_0$ to itself which passes through $\omega_1$.  Accordingly, we can define a Floer operator $\mathcal F_{\rm \rm sym}^s$ and an $\mathcal F_{\rm sym}^s$-trajectory as in Definition \ref{def-FO}, and an admissible $\mathcal F_{sym}^s$-trajectory as in Definition \ref{dfn-ad-tra}. \end{dfn}

Then we have the following energy estimation. 

\begin{prop} \label{ene-est-cor3}
Suppose that $u: \R \times S^1 \to M$ is an admissible $\mathcal F_{\rm sym}^s$-trajectory from $[\gamma_-, w_-]$ to $[\gamma_+, w_+]$. Then we have the following energy estimation between the symplectic action functionals, 
\begin{equation*} \label{ene-est-in4}
 - (1+ C|\alpha|) E(u) \leq \mathcal A_{\omega_0}([\gamma_+, w_+]) - \mathcal A_{\omega_0}([\gamma_-, w_-]) \leq -(1-C|\alpha|)E(u) 
  \end{equation*}
where the constant $C$ is the one from Proposition \ref{ene-est}. \end{prop}

\begin{proof} Denote $w_s = w_- \# u((-\infty, s] \times S^1)$. Similarly to (\ref{est-mid}), one gets
\begin{align*}
 \mathcal A_{\omega_0}([\gamma_+,w_+]) - \mathcal A_{\omega_0}([\gamma_-,w_-])  &  = - E(u) -  \int_{-\infty}^{\infty} \kappa'(s) \int_{D^2} w_s^* \alpha \,ds.
 \end{align*}
Then, integration by parts, one gets
\begin{align*}
\int_{-\infty}^{\infty} \kappa'(s) \int_{D^2} w_s^* \alpha \,ds  &= \int_{-\infty}^{\infty} \frac{d}{ds} \left(\kappa(s) \int_{D^2} w_s^* \alpha\right) ds - \int_{-\infty}^{\infty} \kappa(s) \left(\frac{d}{ds}\int_{D^2} w_s^* \alpha \right)ds \\
& = - \int_{-\infty}^{\infty} \kappa(s) \left(\frac{d}{ds}\int_{D^2} w_s^* \alpha \right)ds \\ &=  -\int_{-\infty}^{\infty} \int_0^1\kappa(s) \alpha \left( \frac{\partial u}{\partial s}, \frac{\partial u}{\partial t}\right) dtds.
\end{align*}
The last equality comes from (\ref{der-2-form}). Moreover, 
\[ -C|\alpha| E(u) \leq  \int_{-\infty}^{\infty} \int_0^1\kappa(s) \alpha \left( \frac{\partial u}{\partial s}, \frac{\partial u}{\partial t}\right) dtds \leq C|\alpha| E(u)\]
by claim (\ref{a-energy}). Therefore, we get the conclusion. \end{proof}

Observe that there is no term $\int_{D^2} w_-^* \alpha$ in the estimation from Proposition \ref{ene-est-cor3} exactly because the cut-off function $\kappa(s)$ for $\mathcal F_{sym}^s$ is symmetric. When working on a general symplectic manifold, the value of $\int_{D^2} w_-^* \alpha$ depends on the disk $w_-$ spanning $\gamma_-$, so the action on $w_-$ from $\pi_2(M)$ can possibly change this value. In other words, the estimation from Proposition \ref{ene-est} does not provide a uniform bound, which causes an algebraic difficulty involving the Novikov finiteness condition. On a different topic, when using the energy estimation from Proposition \ref{ene-est} or Proposition \ref{ene-est-cor3}, we always assume $|\alpha|$ is sufficiently small (which is equivalent to an assumption that the perturbation $\omega_1$ is sufficiently close to $\omega_0$) so that $C|\alpha|<1$.

\section{Proof of Theorem \ref{continuity result 1}} \label{sec-cont}

The main part of the proof of Theorem \ref{continuity result 1} is from the following quantitative comparison. Recall that the definition of a $\delta$-quasiequivalence is given by Definition \ref{dfn-qe}. 

\begin{lemma} \label{quasi-equ} Let $(M, \omega, H)$ be a Hamiltonian system where $M$ is aspherical, and $\omega'$ is a perturbation of $\omega$. Then there exists an $S(\alpha)$-quasiequivalence between filtered complexes  $(\CF_*(M, J, H, \omega), \partial_{\omega}, \ell_{\omega})$ and $(\CF_*(M, J, H, \omega'), \partial_{\omega'}, \ell_{\omega'})$ for some constant $S(\alpha)$ that depends on $\alpha$. \end{lemma}

Since $M$ is aspherical, Novikov field $\Lambda^{\mathcal K, \Gamma} = \K$. Therefore, both Floer chain complexes $(\CF_*(M, J, H, \omega), \partial_{\omega})$ and $(\CF_*(M, J, H, \omega'), \partial_{\omega'})$ are finite dimensional over $\K$. 

\begin{proof} Fix a basis of $\CF_*(M, J, H, \omega)$ over $\K$, say $\{[x_1, w_1], ..., [x_n, w_n]\}$. They are also a basis for $\CF_*(M, J, H, \omega')$ by Proposition \ref{no-other-orbits}. We need to find a quadruple $(\Phi, \Psi, K, K')$ that satisfies the conditions in Definition \ref{dfn-qe}. 

First, fix any interpolating homotopy between $\omega$ and $\omega'$ as in (\ref{dfn-int-htp}). Recall that the Floer operator $\mathcal F^s$ and the admissible $\mathcal F^s$-trajectories are defined in Definition \ref{def-FO} and \ref{dfn-ad-tra}. Consider a map $\Phi: \CF_*(M, J, H, \omega) \to \CF_*(M, J, H, \omega')$ defined by 
\begin{equation} \label{chain map one}
\Phi([x_i, w_i]) = \sum_{j \in\{1, ..., n\},  \, \mu_{CZ}([x_i,w_i]) = \mu_{CZ}([x_j,w_j])} n([x_i, w_i], [x_j, w_j]) [x_j, w_j] 
\end{equation}
where number $n([x_i, w_i], [x_j, w_j])$ is defined by counting admissible $\mathcal F^s$-trajectories connecting $[x_i,w_i]$ and $[x_j, w_j]$. Due to the index condition, $n([x_i, w_i], [x_j, w_j])$ is a finite number. Moreover, since there are only finitely many Hamiltonian 1-periodic orbits, the sum in the expression (\ref{chain map one}) is a finite sum. Finally, by the standard Floer gluing argument (cf. subsection 3.3 and subsection 3.4  in \cite{Sal97}), $\Phi$ is a chain map. Similarly, we can define $\Psi: \CF_*(M, J, H, \omega') \to \CF_*(M, J, H, \omega)$ and it is also a chain map. 

Now let us study the change of filtrations. By Proposition \ref{ene-est}, any admissible $\mathcal F^s$-trajectory $u$ connecting $[x_i,w_i]$ and $[x_j, w_j]$ gives rise to an inequality
\begin{equation} \label{upperbound} 
\mathcal A_{\omega'}([x_j, w_j]) - \mathcal A_{\omega}([x_i, w_i]) \leq -(1-C|\alpha|)E(u) - \int_{D^2} w^*_i \alpha \leq -\int_{D^2} w^*_i \alpha
\end{equation}
because we always assume $\omega'$ is sufficiently close to $\omega$ (so $1-C|\alpha|>0$). By the definition of filtration function (\ref{dfn-filtration}), for any chain $c \in \CF_*(M, J, H, \omega)$, there exists some $j_0 \in \{1, ..., n\}$ {\it depending on $\alpha$} such that $\ell_{\omega'}(\Phi(c)) = \mathcal A_{\omega'}([x_{j_0}, w_{j_0}])$. Meanwhile, there exists some $i_0 \in \{1, ..., n\}$ such that $[x_{i_0}, w_{i_0}]$ from the chain $c$ connects to $[x_{j_0}, w_{j_0}]$ by an admissible $\mathcal F^s$-trajectory $u$. Therefore, 
\begin{equation} \label{ub} \mathcal A_{\omega'}([x_{j_0}, w_{j_0}]) - \mathcal A_{\omega}([x_{i_0}, w_{i_0}]) \leq -\int_{D^2} w^*_{i_0} \alpha. 
\end{equation}
Since $\ell_{\omega}(c) \geq \mathcal A_{\omega}([x_{i_0}, w_{i_0}])$, we have $\ell_{\omega'}(\Phi(c)) - \ell_{\omega}(c) \leq -\int_{D^2} w^*_{i_0} \alpha$. Denote 
\[ S(\alpha) = \max_{i} \left|\int_{D^2} w_i^* \alpha\right|,\]
and it is non-negative. Then $\ell_{\omega'}(\Phi(c)) \leq \ell_{\omega}(c) + S(\alpha)$ for any $c \in \CF_*(M, J, H, \omega)$. A similar argument for $\Psi$ results in the same constant $S(\alpha)$ and a similar filtration inequality. 

Next, we construct a homotopy between $\mathds{1}_{\omega}$ and $\Psi \circ \Phi$ where $\mathds{1}_{\omega}$ is the identity map on the complex $(\CF_*(M, J, H, \omega), \partial_{\omega})$. So far, we have two different homotopies of symplectic structure $\omega$: one is the constant homotopy of $\omega$ that corresponds to $\mathds{1}_{\omega}$, and the other is a symmetric homotopy of $\omega$ which passes through $\omega'$. This homotopy is induced by the composition $\Psi \circ \Phi$. Let us denote this symmetric homotopy as $s \mapsto \omega_s$. Take a homotopy $\{\omega_{s, \lambda}\}_{\lambda \in [0,1]}$ between these two homotopies $s \mapsto \omega_s$ and $\omega$ such that $\omega_{s,0} = \omega$ and $\omega_{s,1} = \omega_s$. Moreover, we require $\omega_{s,\lambda}$ to be a symmetric homotopy for each $\lambda \in [0,1]$. Recall that the operator $\mathcal F^{s,\lambda}_{\rm sym}$ and the admissible $\mathcal F^{s,\lambda}_{\rm sym}$-trajectory with respect to $\omega_{s,\lambda}$ are defined in Definition \ref{dfn-sym-FO}. Define $K: \CF_*(M,J, H, \omega) \to \CF_{*+1}(M, J, H, \omega)$ as 
\begin{equation} \label{htp-map-one}
K([x_i, w_i]) = \sum_{j \in\{1, ..., n\},\,\mu_{CZ}([x_i,w_i])+1 = \mu_{CZ}([x_j, w_j])} n([x_i, w_i], [x_j, w_j]) [x_j, w_j] 
\end{equation}
where number $n([x_i, w_i], [x_j, w_j])$ is defined by counting pairs $(u, \lambda)$ where $u$ is an admissible $\mathcal F^{s,\lambda}_{\rm sym}$-trajectory connecting $[x_i,w_i]$ and $[x_j, w_j]$. Again, $n([x_i, w_i], [x_j, w_j])$ is finite and (\ref{htp-map-one}) is a finite sum. Moreover, by a standard argument considering corresponding moduli space, one can show that $K$ provides a homotopy between $\mathds{1}_{\omega}$ and $\Psi \circ \Phi$. Similarly, we obtain a homotopy $K'$ between $\mathds{1}_{\omega'}$ and $\Phi \circ \Psi$ where $\mathds{1}_{\omega'}$ is the identity map on the complex $(\CF_*(M, J, H, \omega'), \partial_{\omega'})$.

Again, let us study the changes of filtrations. For any chain $c \in \CF_*(M, J, H, \omega)$, suppose $\ell_{\omega}(K(c)) = \mathcal A_{\omega}([x_{q_0}, w_{q_0}])$ for some $q_0 \in \{1, ..., n\}$. There exists some $[x_{p_0}, w_{p_0}]$ from chain $c$ connected to $[x_{q_0}, w_{q_0}]$ by an admissible $\mathcal F^{s,\lambda}$-trajectory for some $\lambda \in [0,1]$. By Proposition \ref{ene-est-cor3}, $\mathcal A_{\omega}([x_{q_0}, w_{q_0}]) \leq \mathcal A_{\omega}([x_{p_0}, w_{p_0}])$. This implies 
\[ \ell_{\omega}(K(c)) - \ell_{\omega}(c) \leq \mathcal A_{\omega}([x_{q_0}, w_{q_0}]) -\mathcal A_{\omega}([x_{p_0}, w_{p_0}]) \leq 0.\]
Since $S(\alpha)  \geq 0$, in particular, $\ell_{\omega}(K(c)) \leq \ell_{\omega}(c) + 2 S(\alpha)$ for any $c \in \CF_*(M, J, H, \omega)$. A similar inequality holds for the other homotopy $K'$. Thus we get the conclusion.\end{proof}

\begin{remark} Under the hypothesis of Lemma \ref{quasi-equ}, the same argument can prove the following more general result if one uses Proposition \ref{ene-est-cor2}: for any $s\leq t$ in $[0,1]$, there exists some constant $S_{s,t}(\alpha)$ such that $(\CF_*(M, J, H, \omega_s), \partial_{\omega_s}, \ell_{\omega_s})$ and $(\CF_*(M, J, H, \omega_t), \partial_{\omega_t}, \ell_{\omega_t})$ are $S_{s,t}(\alpha)$-quasiequivalent. Moreover, $S_{s,t}(\alpha) \leq |s-t|\cdot C|\alpha|$ for some constant $C$. \end{remark}

Now, we are ready to give the proof of Theorem \ref{continuity result 1}. 

\begin{proof} [Proof of Theorem \ref{continuity result 1}]
From Proposition \ref{re-sim-set-up}, if $\omega'$ is a perturbation of $\omega$, then there exists a $\phi \in {\rm Diff}(M)$ such that $\phi^*\omega'$ is a reduced perturbation of $\omega$. As elaborated in Section \ref{simple}, if we replace $\omega'$ with $\phi^*\omega$ in our discussion, then changes from the perturbation of Hamiltonian functions (from $H$ to $\phi^*H$) needs to be considered. Theorem \ref{Lip} implies that the resulting changes on both spectral invariants and boundary depth are no greater than $|\phi^*H - H|_{H}$. Since $M$ is compact,  $|\phi^*H - H|_{H} \leq C' |\omega' - \omega|$ for some constant $C'$ that involves a $C^0$-norm of $H$. In what follows, we will focus on the case when $\omega'$ is a reduced perturbation of $\omega$. 

The desired conclusion for boundary depth directly comes from Lemma \ref{quasi-equ} and Theorem \ref{bd-qe}. For spectral invariants, by the same idea as the proof of (iii) in Theorem 3.1 in \cite{PSS96}, we have the following commutative diagram 
\[ 
\xymatrix{
 & H_*(M; \mathcal K) \ar[dl]_{PSS^{\omega}_*} \ar[dr]^{PSS^{\omega'}_*} \\
\HF_*(M, J, H, \omega) \ar[rr]_{\Phi_*} && \HF_*(M, J, H, \omega')}
\]
where $\Phi_*$ is the chain map constructed from Lemma \ref{quasi-equ}. By Theorem \ref{realization thm}, there exists some element $c \in \CF_*(M, J, H, \omega)$ such that $\rho(a, H; \omega) = \ell_{\omega}(c)$, and $[c] = PSS_*^{\omega}(a)$. Then 
\[ [\Phi(c)] = \Phi_*([c]) = \Phi_* (PSS_*^{\omega}(a)) = PSS_*^{\omega'}(a). \]
Therefore, 
\[\rho(a, H; \omega') - \rho(a, H; \omega) \leq \ell_{\omega'}(\Phi(c)) - \ell_{\omega}(c) \leq S(\omega'-\omega). \]
Switch the role of $\omega$ and $\omega'$, then we get the other inequality. Therefore, 
\[ |\rho(a, H; \omega) - \rho(a, H; \omega')| \leq S(\omega' - \omega).\]
Finally, it is easy to see that there exists a constant $C$ such that $S(\omega' - \omega) \leq C|\omega' - \omega|$. Thus we get the conclusion. 
\end{proof}

To end this section, we will give the proof of Corollary \ref{continuity result 2}. 

\begin{proof} [Proof of Corollary \ref{continuity result 2}] Similarly to the argument in the proof of Theorem \ref{continuity result 1}, we will only focus on the case when $\omega'$ is a reduced perturbation of $\omega$. By the hypothesis on the dimension of $H^2(M; \K)$, there exists a number $\lambda$ (sufficiently) close to $1$ such that $[\omega']= \lambda [\omega]$. A key observation is that if we rescale $\lambda H$ and $\lambda \omega$, then  
\[ \beta(\lambda H; \lambda \omega)  = \lambda \beta(H; \omega) \,\,\,\,\mbox{and}\,\,\,\, \rho([M], \lambda H; \lambda \omega) = \lambda \rho([M], H; \omega). \]
Hence, by (b) in Theorem \ref{Lip} and Theorem \ref{continuity result 1},
\begin{align*}
|\beta(H; \omega) - \beta(H; \omega')| & \leq |\beta(H; \omega) - \beta(\lambda H; \lambda \omega)| \\
&+ |\beta(\lambda H; \lambda \omega) - \beta(H; \lambda \omega)| \\
& + |\beta(H; \lambda \omega) - \beta(H; \omega')|\\
& \leq |1-\lambda| \beta(H; \omega) + |1-\lambda| ||H||_H + C|\lambda \omega - \omega'|\\
& \leq |1-\lambda| \beta(H; \omega) + |1-\lambda| ||H||_H + C(|1-\lambda||\omega| + |\omega - \omega'|) \\
& = |1-\lambda| \cdot A + C|\omega - \omega'|
\end{align*}
where $A : =\beta(H; \omega) + ||H||_H + C|\omega|$ and $C$ is the constant from Theorem \ref{continuity result 1}. Moreover, $[\omega - \omega'] = (1-\lambda)[\omega]$ implies 
\begin{equation} \label{small-est}
|1-\lambda||[\omega]|_h = |[\omega - \omega']|_h \leq |\omega - \omega'|. 
\end{equation}
Therefore, $|1-\lambda| \leq \frac{1}{|[\omega]|_h} |\omega - \omega'|$. So 
\[ |1-\lambda| \cdot A + C|\omega - \omega'| \leq \left(\frac{A}{|[\omega]|_h} + C\right) \cdot |\omega - \omega'|. \]
Therefore, set $\tilde C_2 = \frac{A}{|[\omega]|_h} + C$ and we get the conclusion. 

Similarly, for spectral invariants, by (a) in Theorem \ref{Lip} and Theorem \ref{continuity result 1},
\begin{align*}
|\rho([M], H; \omega) - \rho([M], H; \omega')| & \leq |\rho([M], H; \omega) - \rho([M], \lambda H; \lambda \omega)| \\
&+ | \rho([M], \lambda H; \lambda \omega) -  \rho([M], H; \lambda \omega)| \\
& + |\rho([M], H; \lambda \omega) - \rho([M], H; \omega')|\\
& \leq |1-\lambda| \rho([M], H; \omega)  + |1-\lambda| ||H||_H + C|\lambda \omega - \omega'|\\
& \leq |1-\lambda| \cdot B + C|\omega - \omega'|
\end{align*}
where $B := \rho([M], H; \omega) + ||H||_H + C|\omega|$ and $C$ is the constant from Theorem \ref{continuity result 1}. The same estimation as in (\ref{small-est}) implies the desired conclusion when we set $\tilde{C}_1 = \frac{B}{|[\omega]|_h} + C$. \end{proof}

\begin{remark}
From the perspective of persistent homology theory in \cite{UZ16}, we can associate barcodes, denoted by $\mathcal B_{\omega}$ and $\mathcal B_{\omega'}$, to both $\mathcal C_{\omega} := (\CF_*(M, J, H, \omega), \partial_{\omega}, \ell_{\omega})$ and $\mathcal C_{\omega'} := (\CF_*(M, J, H, \omega'), \partial_{\omega'}, \ell_{\omega'})$, respectively. Lemma \ref{quasi-equ} implies that their quasiequivalence distance (Definition 8.1 in \cite{UZ16}) satisfies 
\[ d_Q(\mathcal C_{\omega}, \mathcal C_{\omega'}) \leq \frac{S(\omega - \omega')}{2}.\]
Recall that $d_B$ denotes the bottleneck distance (Definition 8.14 in \cite{UZ16}). Then Stability Theorem in \cite{UZ16} implies the following inequality, 
\[ d_B(\mathcal C_{\omega}, \mathcal C_{\omega'}) \leq 2 d_Q(\mathcal C_{\omega}, \mathcal C_{\omega'}) \leq S(\omega - \omega')\leq C|\omega - \omega'|. \]
This can be regarded as a generalization of Theorem \ref{continuity result 1}. \end{remark} 

\section{Proofs of Theorem \ref{bscP} and Theorem \ref{bd-cap}} \label{sec-app}

\begin{proof} [Proof of Theorem \ref{bscP}] Let $U\subset \Sigma$ be a disjoint union of simply connected open subsets and $H \in C^{\infty}(\R/\Z \times U)$. Pick an open ball $V \subset \Sigma \backslash \bar{U}$ and fix a closed 2-form $\alpha$ positively supported in $V$ and that vanishes elsewhere. By our choice of $H$, $\alpha \in \Omega_{\omega, H}$. Fix $\lambda \geq 1$. Consider the following isotopy of the reduced perturbations,
\[ \omega_s = \omega + s (\lambda \alpha)\]
where $s \in [0,1]$. Take $\lambda$ sufficient large such that, on $(\Sigma, \omega_1)$, $U$ can be viewed as a disjoint union of topological balls with total area sufficiently smaller than the total area of $\Sigma$. Then it is displaceable. By an energy-capacity inequality, say Corollary 3.3 in \cite{Ush10}, 
\begin{equation} \label{ece}
\rho(a, H, \omega_1) \leq e^{\omega_1}(U) <+\infty
\end{equation}
where $e^{\omega_1}(U)$ denotes the displacement energy of $U$ under symplectic structure $\omega_1$. Moreover since $\omega_s$ coincides  with $\omega$ inside $U$ and $H$ vanishes outside $U$, 
\[ \rho(a, H; \omega_s) \in \S(H, \omega) \,\,\,\,\,\,\,\,\mbox{where} \,\,\,\,\,\ s \in [0,1]\]
for any $a \in H_*(\Sigma, \K)$. Note that in our situation $\S(H, \omega_0)$ is just a finite set of $\R$. Then Theorem \ref{continuity result 1} implies that $\{\rho(a, H; \omega_s)\}_{0 \leq t \leq 1}$ is a continuous path over a finite set. So $\rho(a, H; \omega_s)$ is constant for all $s \in [0,1]$ and then $\rho(a, H; \omega) \leq e^{\omega_1}(U)$. We get the desired conclusion by setting $K = e^{\omega_1}(U)$ in Definition \ref{bsc}. \end{proof}

\begin{remark} (i) Corollary 3.3 in \cite{Ush10} only claims an energy-capacity inequality (\ref{ece}) for $a = [\Sigma]$. However, since $[\Sigma]$ is a unit under quantum product $\ast$ (here it is just intersection of homology classes), the triangle equality of spectral invariants says
\[ \rho(a, H; \omega_1)  = \rho(a * [\Sigma], 0+H; \omega_1) \leq \rho(a, 0; \omega_1) + \rho([\Sigma], H; \omega_1) \leq e^{\omega_1}(U). \]
(ii) Theorem \ref{continuity result 1} only applies locally, so in order to claim that $\{\rho(a, H; \omega_s)\}_{0 \leq t \leq 1}$ is a continuous path, we need to apply Theorem \ref{continuity result 1} inductively. To this end, we can choose a uniform size of the neighborhoods of symplectic structures such that the energy estimation, Theorem \ref{ene-est}, applies. Then our claim directly follows from the compactness of interval $[0,1]$. (iii) For $\Sigma = S^2$, the argument in the proof of Theorem \ref{bscP} does not apply. Indeed, $s \to \rho([M], H, \omega_s)$ does not provide a function from $[0,1]$  to $\S(H, \omega)$ since the corresponding fundamental classes actually lie in different quantum homologies when $\omega_s$ changes.
\end{remark}

Before giving the proof of Corollary \ref{bscPP}, we need to recall equivalent definitions of a heavy subset and a superheavy subset. This is stated as Proposition 4.1 in \cite{EP09}. 

\begin{dfn} \label{hsh1}  Given a closed subset $X \subset M$ and $a \in \QH_*(M, \omega)$, if $\zeta_a(H; \omega) = 0$ for any $H \in C^{\infty}(M)$ such that $H \leq 0$ and $H|_X =0$, then $X$ is called $a$-heavy; if $\zeta_a(H; \omega) = 0$ for any $H \in C^{\infty}(M)$ such that $H \geq 0$ and $H|_X =0$, then $X$ is called $a$-superheavy.
\end{dfn}

\begin{proof} [Proof of Corollary \ref{bscPP}] For (a), this is immediate from Definition \ref{hsh1}. In fact, for any $H \geq 0$ with $H|_X =0$, it is supported in $\Sigma_g \backslash X$ which is a disjoint union of simply connected regions. By Theorem \ref{bscP}, $\rho(a, H, \omega) \leq K$ for some finite $K \geq 0$, so $\rho(a, kH, \omega)<K$ for any $k \in \N$. By the definition of (partial) symplectic quasi-states in Definition \ref{dfn-pqs}, we know $\zeta_a(H; \omega) = 0$. Therefore, $X$ is $a$-superheavy. For (b), let $U$ be the simply connected region that $X$ lie in. By Theorem \ref{bscP} and argument above, for any $H$ supported in $U$, $\zeta_a(H; \omega) =0$. Meanwhile, we can always choose $H$ such that $H(x) \geq \delta >0$ for every $x \in X$. Then Definition \ref{hsh2} says that $X$ is not $a$-heavy. \end{proof}

Finally, we will study the continuity of some symplectic capacities. Let $A \subset M$. Recall the definitions of $c^{\omega}_{HZ}(A)$ and $c^{\omega}_{\rho}(A)$. 

\begin{dfn}\label{dfn-hz} Hofer-Zehnder capacity $c_{HZ}^{\omega}(A)$ is defined as 
\[ c_{HZ}^{\omega}(A) = \sup\{ \max H\,| \, H \in \mathcal H(A) \,\,\mbox{is HZ-admissible}\}.\]
Here $\mathcal H(A)$ contains all the autonomous functions on $M$ with compact support in $A$ and $H^{-1}(0)$ and $H^{-1}(\max(H))$ contain nonempty open sets. Moreover, HZ-admissible means that the Hamiltonian flow of $H$ under $\omega$ contains no nonconstant periodic orbit of period at most $1$. Roughly speaking, it excludes those ``fast'' Hamiltonian  orbits. \end{dfn}

\begin{dfn}\label{dfn-sc} Spectral capacity $c_{\rho}^{\omega}(A)$ is defined as 
\[ c_{\rho}^{\omega}(A) = \sup\{\rho([M], H; \omega)\,| \, H \in C_c^{\infty}(\R/\Z \times A)\}.\]
\end{dfn}

Now, let us give the proof of Theorem \ref{bd-cap}. 

\begin{proof} [Proof of Theorem \ref{bd-cap}]
Fix $\ep>0$. By definition, there exists some $H \in C_c^{\infty}(\R/\Z \times A)$ such that $c_{\rho}^{\omega}(A) < \rho([M], H; \omega) + \ep/2$. Under our hypothesis, Theorem \ref{continuity result 1} and Corollary \ref{continuity result 2} state that, near $\omega$, $\omega' \to \rho([M], H; \omega')$ is continuous when $M$ satisfies topological hypothesis in our assumption. In particular, it is lower semicontinuous. Then for this fixed $\ep$, there exists a neighborhood of $\omega$ in $\Omega_{\omega, H}$ (so in $\Omega_{\tiny \rm closed}^2(M)$) denoted by $U_{\omega}(\ep)$ such that  $\rho([M], H; \omega') - \rho([M], H; \omega) \geq -\ep/2$ for any $\omega' \in U_{\omega}(\ep)$. Hence,
\[ c_{\rho}^{\omega'}(A) - c_{\rho}^{\omega}(A) \geq  \rho([M], H; \omega') - \rho([M], H; \omega) - \ep/2  \geq -\ep. \]
Therefore, by (\ref{e-c-i}), 
\[ e^{\omega'}(A) \geq c_{\rho}^{\omega'}(A) \geq c_{\rho}^{\omega}(A) - \ep \geq c_{HZ}^{\omega}(A) - \ep.\]
Thus we get the conclusion. 
\end{proof}

\begin{remark} We can also use another capacity $c^{\omega}_{\beta}(A)$ from \cite{Ush13}. It is defined as 
\[ c_{\beta}^{\omega}(A) = \sup\{\beta(H; \omega) \,| \, H \in C^{\infty}_c(\R/\Z \times A)\}. \]
Corollary 5.12 in \cite{Ush13} says that $e^{\omega}(A) \geq \frac{1}{2} c_{\beta}^{\omega}(A)$. By the lower semicontinuity of boundary depth from Theorem \ref{continuity result 1} and Corollary \ref{continuity result 2}, we can prove that $e^{\omega'}(A)$ is bounded from below by $\frac{1}{2} c^{\omega}_{\beta}(A) - \ep$ for any $\omega' \in U_{\omega}(\ep)$. \end{remark}

\section{Variant Floer chain complexes} \label{vari-fcc}
\subsection{Novikov ring with multi-finiteness condition} \label{vat-nov} 
In this subsection, we will study the Novikov ring with multi-finiteness condition defined in Definition \ref{dfn-nov-mfc}. As explained in the introduction, this is the starting point of comparing Floer chain complexes with different symplectic structures in a general set-up. First of all, let us recall the following extended version of Novikov ring considered in \cite{Ush08},
\begin{equation*} \label{exp1}
\Lambda_{\omega} = \left\{ \sum_{A \in H^S_2(M)} a_A T^A\, \bigg| a_A \in \mathcal K, (\forall C \in \R) \,(\#\{a_A \neq 0 \, | \, [\omega](A) \leq C\} < \infty) \right\}
\end{equation*} 
where $H_2^S(M)$ is the image of $\pi_2(M)$ in $H_2(M;\Z)/{\rm Tor}$ under Hurewicz map $\iota: \pi_2(M) \to H_2(M;\Z)$. By the following exact sequence, 
\begin{equation} \label{ses}
0 \to \ker[\omega] \to H^S_2(M) \xrightarrow{[\omega]} \Gamma_{\omega} \to 0, 
\end{equation}
where $\Gamma_{\omega} = {\rm Im}[\omega]$, we can write any element $x = \sum_{A \in H_2^S(M)} a_A T^A \in \Lambda_{\omega}$ as 
\begin{equation} \label{iso}
x = \sum_{g \in \Gamma} a_g T^g \,\,\,\,\mbox{where} \,\,\, a_g \in \K[\ker[\omega]].
\end{equation} 
In other words, $\Lambda_{\omega}$ can be rewritten as  
\begin{equation} \label{exp2}
\Lambda_{\omega} = \left\{ \sum_{g \in \Gamma_{\omega}} a_g T^g \, \bigg| a_g \in \mathcal K[\ker[\omega]], (\forall C \in \R) \,(\#\{a_{g} \neq 0 \, | \, g \leq C\} < \infty) \right\}.
\end{equation}
Note that in general, $\mathcal K[\ker[\omega]]$ is not necessarily a PID, therefore, by Theorem 4.2 in \cite{HS95}, $\Lambda_{\omega}$ is not necessarily a PID. However, since $\mathcal K$ is Noetherian, $\mathcal K[\ker[\omega]]$ is Noetherian. Compared with the very often used Novikov field $\Lambda^{\mathcal K, \Gamma}$ defined in (\ref{dfn-nf}), there is a natural homomorphism $R_{\omega}: \Lambda_{\omega} \to \Lambda^{\mathcal K, \Gamma_{\omega}}$ defined by 
\begin{equation} \label{trivial-ker}
\sum_{g \in \Gamma_{\omega}} a_g T^g \xrightarrow{R_{\omega}} \sum_{g \in \Gamma_{\omega}}[a_g] T^g
\end{equation}
where $[a_g] \in \K$ is defined as follows: if $a_g = \sum_h a_{g,h} S^h$ where $h \in \ker[\omega]$ and $S$ is the formal variable, then $[a_g] = \sum_h a_{g,h}$. In other words, we uniformly weight any $h \in \ker[\omega]$ by the value zero. Then $\Lambda^{\mathcal K, \Gamma_{\omega}}$ can be regarded as a $\Lambda_{\omega}$-module. The following property of $\Lambda_{\omega}$ will be useful later.

\begin{lemma} \label{int-domain}
$\Lambda_{\omega}$ is an integral domain. 
\end{lemma}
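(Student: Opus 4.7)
The plan is to use the standard Novikov-style ``lowest-order coefficient'' argument, reducing the integral domain property of $\Lambda_\omega$ to that of the group ring $\mathcal{K}[\ker(\omega)]$.

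First I would take two nonzero elements $x = \sum_{g \in \Gamma_\omega} a_g T^g$ and $y = \sum_{h \in \Gamma_\omega} b_h T^h$ in $\Lambda_\omega$. By the Novikov finiteness condition (truncating below any fixed $C$ leaves only finitely many nonzero terms) and the fact that $\Gamma_\omega \leq \mathbb{R}$ is totally ordered, the supports $\{g : a_g \neq 0\}$ and $\{h : b_h \neq 0\}$ each attain a minimum; call these $g_0$ and $h_0$. The product $xy$ has a well-defined coefficient of $T^{g_0 + h_0}$ equal to $\sum_{g+h = g_0+h_0} a_g b_h$, and by minimality the only pair contributing is $(g,h) = (g_0, h_0)$, so the coefficient of $T^{g_0+h_0}$ in $xy$ is exactly $a_{g_0} b_{h_0} \in \mathcal{K}[\ker(\omega)]$.

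Thus the claim reduces to showing $a_{g_0} b_{h_0} \neq 0$, i.e.\ that $\mathcal{K}[\ker(\omega)]$ is itself an integral domain. For this I would invoke the classical fact that the group algebra of a torsion-free abelian group over a field is an integral domain (a consequence of being able to totally order a torsion-free abelian group and repeating the same minimal-term argument). So I need $\ker(\omega)$ to be torsion-free. But $\ker(\omega)$ sits inside $H_2^S(M)$, which is by definition a subgroup of the torsion-free group $H_2(M;\mathbb{Z})/\text{Tor}$; hence $\ker(\omega)$ is torsion-free abelian, and $\mathcal{K}[\ker(\omega)]$ is an integral domain.

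The main (and really the only) obstacle is verifying well-definedness of the product --- that $xy \in \Lambda_\omega$ actually satisfies the Novikov finiteness condition so that the coefficient extraction above makes sense. This is standard: for any $C$, a term $T^{g+h}$ with $g+h \leq C$ forces either $g$ or $h$ to lie below some threshold depending only on the minima $g_0, h_0$, and the finiteness conditions on $x$ and $y$ then bound the number of contributing pairs; once this is in hand, the minimal-term calculation goes through unchanged and the proof is complete.
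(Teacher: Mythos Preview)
Your proposal is correct and follows essentially the same approach as the paper: both arguments use the minimal-exponent trick to reduce the integral domain property of $\Lambda_\omega$ to that of $\mathcal{K}[\ker(\omega)]$, then observe that $\ker(\omega)$ is torsion-free abelian (as a subgroup of $H_2^S(M) \subset H_2(M;\Z)/\mathrm{Tor}$) so its group algebra over a field has no zero-divisors. The paper cites a reference for this last fact while you sketch the ordering argument, and you are slightly more explicit about why the lowest-order coefficient of $xy$ is exactly $a_{g_0}b_{h_0}$ and about well-definedness of the product, but the structure is the same.
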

\begin{proof} First, because $\ker[\omega]$ is a subgroup of $H^S_2(M)$ which is torsion-free and abelian, $\ker[\omega]$ is also torsion-free and abelian. By Proposition 1.3 and Lemma 0.1 in \cite{Coh74}, $\mathcal K[\ker[\omega]]$ is an integral domain. Take two non-zero elements $\lambda_1$ and $\lambda_2$ in $\Lambda_{\omega}$ and write them as 
\[ \lambda_i  = \sum_{g_{ij} \in \Gamma_{\omega}} a_{g_{ij}}  T^{g_{ij}}  \,\,\,\,\,\,\,\,\, \mbox{where} \,\,\,\,\,\,\, a_{g_{ij}} \in \mathcal K[\ker[\omega]] \,\,\,\,\mbox{and}\,\,\,\,i = 1,2. \]
Because $\lambda_i \neq 0$, the finiteness condition implies that there exist smallest powers for both $i=1,2$. Denote these smallest powers by $g_{1j_1}$ for $\lambda_1$ (for some $j_1$) and $g_{2 j_2}$ for $\lambda_2$ (by some $j_2$). Their corresponding coefficients in $\mathcal K[\ker[\omega]]$, $a_{g_{1j_1}}$ and $a_{g_{2j_2}}$, are in particular non-zero. Then  $a_{g_{1j_1}} \cdot a_{g_{2j_2}}  \neq 0$ implies $\lambda_1 \cdot \lambda_2 \neq 0$. Therefore, $\Lambda_{\omega}$ is an integral domain. \end{proof} 

Recall the construction in subsection \ref{ss-sgm}. There exist homology classes $[\omega_1], ..., [\omega_m]$ in $H^2(M; \K)$ forming a polygon $\Delta(\omega)$ containing $[\omega]$ inside such that any reduced perturbation $\omega'$ (sufficiently close to $\omega$) has $[\omega']$ written as a convex linear combination as (\ref{cov-lin}). Set $\omega_0 = \omega$. Observe that any such convex linear combination can be inductively constructed from the following two-term case,
\[ [\omega'] = (1-t)[\omega_0] + t [\omega_1].\]
Here, we require $[\omega_0]$ and $[\omega_1]$ to be linearly independent over $\K$ in $H^2(M; \K)$. Otherwise, it will be reduced to the rescaling case as studied in Corollary \ref{continuity result 2}. This automatically requires that $\dim_{\K} H^2(M; \K) \geq 2$. For simplicity, we will mainly consider this two-term case in the rest of the paper. Moreover, note that $\omega'$ and $(1-t)\omega_0 + t \omega_1$ are not necessarily the same. Instead, they differ by an exact 2-form. By Moser's trick, it is easy to deal with the perturbations from exact 2-forms. Therefore, without loss of generality, we will also assume $\omega' = (1-t) \omega_0 + t \omega_1$ for some $t \in [0,1]$. 

\begin{dfn} \label{dfn-multi} Fix $\omega_0$ and its reduced perturbation $\omega_1$ such that $[\omega_0]$ and $[\omega_1]$ are linearly independent over $\K$ in $H^2(M; \K)$. We call the following $\Lambda_{[0,1]}$ a {\it Novikov ring with multi-finiteness condition}, 
{\small \begin{equation*} \label{dfn-nr-multi}
\Lambda_{[0,1]} = \left\{ \sum_{A \in H^S_2(M)} a_A T^A \, \bigg| \, a_A \in \mathcal K, (\forall C \in \R)\,(\forall t \in [0,1])\, (\#\{a_A \neq 0 \, | \, [\omega_t](A) \leq C\} < \infty) \right\}.
\end{equation*} }
\end{dfn} 

By this definition, $\Lambda_{[0,1]} = \bigcap_{t \in [0,1]} \Lambda_{\omega_t}$. Two immediate properties of $\Lambda_{[0,1]}$ follow. 
\begin{itemize}
\item[(a)] $\Lambda_{[0,1]}$ is non-empty because every finite length power series, that is polynomial, lies inside. 
\item[(b)] $\Lambda_{[0,1]}$ is also an integral domain because $\Lambda_{[0,1]}$ is a subring of $\Lambda_{\omega_t}$ for any $t \in [0,1]$ and any subring of an integral domain is also an integral domain (cf. Lemma \ref{int-domain}). 
\end{itemize}

In fact, instead of considering uncountably many $\omega_t$ for the finiteness condition in Definition \ref{dfn-multi}, the following result shows that $\Lambda_{[0,1]}$ has a much easier structure. 

\begin{lemma} \label{int-prop} $\Lambda_{[0,1]} = \Lambda_{\omega_0} \cap \Lambda_{\omega_1}$. \end{lemma}

\begin{proof} The inclusion $\Lambda_{[0,1]} \subset \Lambda_{\omega_0} \cap \Lambda_{\omega_1}$ is trivial since the finiteness condition in (\ref{dfn-nr-multi}) is in particular valid for $t=0$ and $t=1$. Now we prove the other inclusion. Take any $x \in \Lambda_{\omega_0} \cap \Lambda_{\omega_1}$, say $x = \sum_{A \in H^S_2(M)} a_A T^A$, that satisfies the finiteness conditions for both $\omega_0$ and $\omega_1$, then for any $C \in \R$ and for any $t \in (0,1)$, $[\omega_t](A) = (1-t) [\omega_0](A) + t [\omega_1](A) \leq C$ implies that either $(1-t)[\omega_0](A) \leq C/2$ or $t [\omega_1](A) \leq C/2$. Therefore, 
\[ [\omega_0](A) \leq \frac{C}{2(1-t)} \,\,\,\,\, \mbox{or} \,\,\,\,\, [\omega_1](A) \leq \frac{C}{2t}. \] 
The defining property of element $x$ implies that in either case, there are only finitely many $A$'s. Therefore, $x$ also satisfies the finiteness condition in (\ref{dfn-nr-multi}).\end{proof}

\begin{ex} \label{counter-ex} Take $x = \sum_{n=0}^{\infty} T^{nA}$ where $A \in \ker[\omega_1]\backslash \ker[\omega_0]$. This is an element in $\Lambda_{\omega_0}$ but not in $\Lambda_{[0,1]}$. In general, $\Lambda_{[0,1]}$ is strictly contained in $\Lambda_{\omega_t}$ for any $t \in [0,1]$. Moreover, the following computation shows that $\Lambda_{[0,1]}$ does not act on $\Lambda_{\omega_0} \backslash \Lambda_{[0,1]}$. Take $x$ as above, 
\[ (1 - T^{A})x = (1- T^A) \left(\sum_{n=0}^{\infty} T^{nA}\right) = 1 \in \Lambda_{[0,1]}. \]
This is different from the observation in the proof of Theorem 2.5 in \cite{Ush08} that for any proper subgroup $G \leq \Gamma_{\omega}$, $\Lambda^{\K, G}$ acts on $\Lambda^{\K, \Gamma_{\omega} \backslash G}$. \end{ex}

\begin{remark} Definition \ref{dfn-multi} can be easily generalized to higher dimensional situations. Assume that $\Delta(\omega)$ is a polygon in $H^2(M; \K)$ that contains $[\omega]$ and also has vertices $[\omega_1], ..., [\omega_m]$. Define
{\small \begin{equation*} \label{high-dim-nr}
\Lambda_{\Delta(\omega)}= \left\{ \sum_{A \in H^S_2(M)} a_A T^A \, \bigg| \, a_A \in \mathcal K, (\forall C \in \R)\,(\forall [\omega'] \in \Delta(\omega)) \, (\#\{a_A \neq 0 \, | \, [\omega'](A) \leq C\} < \infty) \right\}. 
\end{equation*}}
In particular, $\Lambda_{\Delta(\omega)}$ is an integral domain. Moreover, $\Lambda_{\Delta(\omega)} = \Lambda_{\omega} \cap \Lambda_{\omega_1} \cap \ldots \cap \Lambda_{\omega_m}$. \end{remark}

Since we have inclusions $i_0: \Lambda_{[0,1]} \to \Lambda_{\omega_0}$ and $i_1: \Lambda_{[0,1]} \to \Lambda_{\omega_1}$, both $\Lambda_{\omega_0}$ and $\Lambda_{\omega_1}$ are $\Lambda_{[0,1]}$-modules. Together with (\ref{trivial-ker}), we have the following coefficient extensions, 
\[ \xymatrix{
     & \Lambda_{\omega_0} \ar[r]^-{R_{\omega_0}} & \Lambda^{\mathcal K, \Gamma_{\omega_0}} \\
     \Lambda_{\omega_0} \cap \Lambda_{\omega_1} = \Lambda_{[0,1]} \ar[ru]^-{i_0} \ar[rd]_-{i_1}\\
     & \Lambda_{\omega_1} \ar[r]_-{R_{\omega_1}} & \Lambda^{\mathcal K, \Gamma_{\omega_1}.} 
    }\]
Accordingly, we have variant versions of quantum homologies
\[ \xymatrix{
     & \widetilde{\QH_0} \ar[rr]^{\otimes_{\Lambda_{\omega_0}} \Lambda^{\mathcal K, \Gamma_{\omega_0}}} & &\QH_0 \\
     \QH_{[0,1]} \ar[ru]^-{\otimes_{\Lambda_{[0,1]}}\Lambda_{\omega_0}} \ar[rd]_-{\otimes_{\Lambda_{[0,1]}}\Lambda_{\omega_1}}\\
     & \widetilde{\QH_1} \ar[rr]^{\otimes_{\Lambda_{\omega_1}} \Lambda^{\mathcal K, \Gamma_{\omega_1}}} &&\QH_1.    }\]
Here the notation is defined as follows:
\begin{itemize}
\item{} $\QH_{[0,1]} = H_*(M; \mathcal K) \otimes_{\mathcal K} \Lambda_{[0,1]}$.
\item{} $\widetilde{\QH_i} = \QH_{[0,1]} \otimes_{\Lambda_{[0,1]}} \Lambda_{\omega_i}$ for $i = 0, 1$.
\item{} $\QH_i =\widetilde{\QH_i}  \otimes_{\Lambda_{\omega_i}} \Lambda^{\mathcal K,\Gamma_{\omega_i}}$ for $i =0, 1$. 
\end{itemize}

To end this subsection, we consider a similar short exact sequence as (\ref{ses}) but with multi-valuation from $\omega_0$ and $\omega_1$. Explicitly, consider the following short exact sequence, 
\[ 0 \to \ker[\omega_0] \cap \ker[\omega_1] \to H_2^S(M) \xrightarrow{[\omega_0] \times [\omega_1]} \Gamma_{\omega_0} \times \Gamma_{\omega_1} \to 0.\]
This allows us to identify any $x = \sum_{A \in H_2^S(M)} a_A T^A \in \Lambda_{[0,1]}$ with 
\begin{equation} \label{geo-elm}
 x = \sum_{(g_0, g_1) \in \Gamma_{\omega_0} \times \Gamma_{\omega_1}} a_{(g_0, g_1)} T^{(g_0, g_1)} \,\,\,\,\mbox{where}\,\,\, a_{(g_0, g_1)} \in \K[\ker[\omega_0] \cap \ker[\omega_1]]. 
 \end{equation}
In other words, each $x \in \Lambda_{[0,1]}$ can be identified with a set of points of $\R^2$ in coordinates $g_0, g_1$. Moreover, by the multi-finiteness condition, this set is discrete in $\R^2$. 

\subsection{Floer chain complex with multi-finiteness condition}\label{subsec-st-htp}
Given a Hamiltonian system $(M,\omega, H)$, set $\omega_0 = \omega$ and its reduced perturbation $\omega_1$. Similarly to the construction of a Floer chain complex, in this subsection we will construct a variant Floer chain complex over $\Lambda_{[0,1]}$. Define a graded finitely generated free $\Lambda_{[0,1]}$-module $(\CF_{[0,1]})_*$ as 
\begin{equation} \label{dfn-multi-cpx}
(\CF_{[0,1]})_k : = \bigoplus_{i=1}^n \Lambda_{[0,1]} \left<[x_i, w_i]\right> \simeq \bigoplus_{i=1}^n \Lambda_{[0,1]}
\end{equation}
where $n$ is the number of contractible Hamiltonian 1-periodic orbits $x_i \in \P(\omega_0, H)$ with CZ-index equal to $k$ and $w_i$ is a disk spanning $x_i$. Note that (\ref{dfn-multi-cpx}) is well-defined due to Proposition \ref{no-other-orbits}. Moreover, Lemma \ref{int-prop} implies
\begin{equation} \label{multi-t} 
(\CF_{[0,1]})_k = (\widetilde{\CF}_0)_k \cap (\widetilde{\CF}_1)_k =\,\, \bigcap_{t \in [0,1]} ({\widetilde{\CF}_t})_k
\end{equation}
for each degree $k \in \Z$. Here ${\widetilde{\CF}_t}$ is a free $\Lambda_{\omega_t}$-module for any $t \in [0,1]$. 

In order to form a chain complex, we need to choose a boundary operator on $(\CF_{[0,1]})_*$. For any $s \in [0,1]$, let $\omega_s = (1-s)\omega_0 + s \omega_1$, and simply denote by $\partial_s$ the standard Floer boundary operator of $(\CF_*(M, J, H, \omega_s), \partial_{J, H, \omega_s})$. The following Proposition shows that we have a family of boundary  operators for $(\CF_{[0,1]})_*$. 

\begin{prop} \label{well-dfn of bo} For any $s \in [0,1]$, $\partial_s$ is well-defined on $(\CF_{[0,1]})_*$ and satisfies $\partial_s^2 =0$. \end{prop}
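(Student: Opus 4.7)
The plan is to define $\partial_s$ on $(CF_{[0,1]})_*$ by the usual count of index-one Floer trajectories satisfying the unperturbed equation $\mathcal{F}^{s}(u)=0$, exactly as in the construction of the Floer boundary operator on $(\widetilde{CF}_s)_*$. Since by Corollary \ref{no-other-orbits} the capped-orbit generators of $(CF_{[0,1]})_*$ coincide with those of $(\widetilde{CF}_s)_*$, this a priori defines $\partial_s$ on generators; the two nontrivial checks are then (a) that the output lies in $(CF_{[0,1]})_*$, i.e.\ each coefficient series $\sum_{A} n_{A}T^{A}$ satisfies the multi-finiteness condition of Definition \ref{dfn-multi}, and (b) that $\partial_s^2=0$.

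Step (a) is the main content. By Lemma \ref{int-prop} it suffices to verify finiteness with respect to $\omega_0$ and $\omega_1$ separately. Fix generators $[x,w]$ and $[y,v]$: the coefficient of $[y,v]$ in $\partial_s[x,w]$ is a series $\sum_{A} n_{A}T^{A}$ indexed by sphere classes $A$ for which some counted Floer trajectory $u$ has asymptotics $[x,w]$ and $T^{A}[y,v]=[y,v\#A]$. Standard Gromov compactness for $\omega_s$ already gives the $\omega_s$-Novikov finiteness that underlies well-definedness on $(\widetilde{CF}_s)_*$. Suppose, for contradiction, that $\omega_0$-finiteness fails at some $C\in\R$: there is an infinite sequence of distinct classes $\{A_n\}$ with $n_{A_n}\neq 0$ and $[\omega_0](A_n)\leq C$ (the $\omega_1$ case is identical). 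Because only finitely many $A_n$ can have $[\omega_s](A_n)$ below any given constant, after extraction $[\omega_s](A_n)\to +\infty$. Since each $A_n$ arises from a trajectory $u_n$ of the unperturbed equation $\mathcal{F}^{s}$ with the required asymptotics, Corollary \ref{infinity-action} applies to the pair $(s,0)$ and forces $[\omega_0](A_n)\to +\infty$, contradicting $[\omega_0](A_n)\leq C$. Hence $\partial_s[x,w]\in (CF_{[0,1]})_*$; extending $\Lambda_{[0,1]}$-linearly over the finite basis (and using that $\Lambda_{[0,1]}$ is a ring, so products and finite sums of multi-finite series are multi-finite) produces a well-defined operator $\partial_s:(CF_{[0,1]})_*\to (CF_{[0,1]})_{*-1}$.

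For step (b) the inclusion $(CF_{[0,1]})_*\hookrightarrow (\widetilde{CF}_s)_*$ intertwines the two incarnations of $\partial_s$, since both are defined by the same count of $\mathcal{F}^{s}$-trajectories. The identity $\partial_s^2=0$ is the classical fact on $(\widetilde{CF}_s)_*$, so it descends immediately to the submodule $(CF_{[0,1]})_*$.

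The main obstacle is the multi-finiteness check in (a). Multi-finiteness is strictly stronger than $\omega_s$-finiteness (Example \ref{counter-ex}(a)), and there is no purely algebraic way to upgrade one to the other; the bridge is the geometric input of Corollary \ref{infinity-action}, which itself rests on the nonstandard variant energy estimate of Corollary \ref{nt-ene-est} for trajectories of the unperturbed equation evaluated at the ``wrong'' action functional.
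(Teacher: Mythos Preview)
Your proof is correct and rests on the same geometric input as the paper's: the paper bounds the energy directly via Corollary~\ref{nt-ene-est} (giving $E(u)\leq (1-C'_{s,t})^{-1}\bigl(\int_{S^2}A^*\omega_t+N_2\bigr)$ and then invoking Gromov compactness for each $t\in[0,1]$), whereas you first reduce to $t\in\{0,1\}$ via Lemma~\ref{int-prop} and then argue by contradiction through the packaged consequence Corollary~\ref{infinity-action}. The difference is purely organizational; both arguments stand or fall on the Type~II~(ii) energy estimate.
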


\begin{proof} Since $\partial_s$ is already a well-defined boundary operator for the Floer chain complex $(\CF_*(M, J, H, \omega_s), \partial_s)$, the algebraic relation $\partial_s^2$ holds. In order to show that $\partial_s$ is well-defined on $(\CF_{[0,1]})_*$, we need to show that for any $t \in [0,1]$, the output of $\partial_s$ satisfies the finiteness condition in terms of $\omega_t$. Suppose that for a {\it basis element} $[x,w]$, 
\begin{equation} \label{boundary-s}
 \partial_s([x,w]) = \sum_{[y,v] \in \mbox{\tiny{basis}}} \sum_{A \in H_2^S(M)} n_A T^A[y,v]. 
 \end{equation}
By definition, there exists a Floer trajectory $u$, with respect to symplectic structure $\omega_s$, connecting $[x,w]$ and $[y, v \#A]$. We claim that there exists some constant $C'_{s,t}<1$ such that 
\begin{equation} \label{ene-bd}
 \mathcal A_{\omega_t}(T^A [y,v]) - \mathcal A_{\omega_t}([x,w]) \leq -(1-C'_{s,t})E(u). 
\end{equation}
In fact, first we have $\mathcal A_{\omega_s}(T^A[y,v]) - \mathcal A_{\omega_s}([x,w]) = - E(u)$. Here $E(u)$ is the energy of Floer trajectory under symplectic structure $\omega_s$. By the definition of the symplectic action functional with respect to $\omega_t$, 
\begin{align*}
\mathcal A_{\omega_t} ([x,w]) & = - \int_{D^2} w^*\omega_t + \int_0^1 H(x(t),t)dt \\
& = - \int_{D^2} w^*\omega_s + \int_0^1 H(x(t),t) dt + \int_{D^2} w^*(\omega_s - \omega_t) \\
& = \mathcal A_{\omega_s}([x,w]) + (s-t) \int_{D^2} w^*\alpha
\end{align*}
where $\alpha = \omega_1 - \omega_0$. Similarly, 
\[ \mathcal A_{\omega_t}(T^A[y,v]) = \mathcal A_{\omega_s}(T^A[y,v]) + (s-t) \int_{D^2} (v\#A)^*\alpha. \]
Therefore, one gets 
\[  \mathcal A_{\omega_t}(T^A [y,v]) - \mathcal A_{\omega_t}([x,w]) = - E(u) +  (s-t) \left(\int_{D^2} (v\#A)^*\alpha - \int_{D^2} w^*\alpha \right). \]
By our equivalence relation, $\int_{D^2} (v\#A)^*\alpha = \int_{D^2} (w \# u)^* \alpha$. Hence, by (\ref{a-energy}), there exists a constant $C$ such that 
\[  \int_{D^2} (v\#A)^*\alpha - \int_{D^2} w^*\alpha  = \int_{D^2} (w\#u)^* \alpha - \int_{D^2} w^*\alpha  = \int_{\R \times S^1} u^* \alpha \leq C|\alpha|E(u). \]
Meanwhile, as long as $\omega_1$ is {\it a priori} chosen sufficiently close to $\omega_0$, one gets $C|\alpha|<1$. Therefore by setting $C'_{s,t} = |s-t|C|\alpha|$, we get the claim (\ref{ene-bd}).

Next, (\ref{ene-bd}) can be rewritten as 
\[ -\int_{S^2} A^* \omega_t - N_2 \leq - (1- C'_{s,t}) E(u) \]
where $N_2 =\mathcal A_{\omega_t}([x,w])  -\mathcal A_{\omega_t}([y,v])$. This is a constant that is independent of the sphere class $A$. Since $1-C'_{s,t} >0$, 
\[ E(u) \leq \frac{1}{1-C'_{s,t}} \left(\int_{S^2}A^*\omega_t + N_2 \right). \]
If $\int_{S^2} A^* \omega_t < \lambda$ for $\lambda \in \R$, then 
\[ E(u) \leq \frac{\lambda + N_2}{1-C'_{s,t}} < \infty.\]
By the finiteness condition of $\omega_s$, there are only finitely many such sphere classes $A$. Hence, (\ref{boundary-s}) also satisfies the finiteness condition of $\omega_t$.  \end{proof}

For each $((\CF_{[0,1]})_*, \partial_s)$, we can associate a filtration function $\ell_{\omega_s}$  using the symplectic action functional $\mathcal A_{\omega_s}$ together with the valuation on $\Lambda_{[0,1]}$ with respect to $\omega_s$. Explicitly, for any chain $c = \sum_i \lambda_i [\gamma_i, w_i] \in (\CF_{[0,1]})_*$ with $\lambda_i \in \Lambda_{[0,1]}$, 
\begin{equation} \label{fil-s}
\ell_{\omega_s} (c) = \max_{i} \{ \mathcal A_{H, \omega_s}([x_i, w_i])  - \nu_{\omega_s} (\lambda_i)\}
\end{equation}
where $\nu_{\omega_s}(\lambda_i)$ denotes the minimal exponent from $\lambda_i$ after they are evaluated by the symplectic structure $\omega_s$. For brevity, denote $\ell_{\omega_s}$ as $\ell_s$. Proposition \ref{well-dfn of bo} says that there exists a family of variant version of Floer chain complexes $\{((\CF_{[0,1]})_*, \partial_s, \ell_s)\}_{s \in [0,1]}$ which is parametrized by $[0,1]$. It is important to use the filtration function $\ell_s$ and the boundary operator $\partial_s$ together so that this boundary operator strictly decreases the filtrations. In general, we get a family of Floer chain complexes 
\[ \{(\CF_{\Delta(\omega)}, \partial_{\omega'}, \ell_{\omega'})\}_{\omega' \in \Delta(\omega)}. \]
In a recent paper \cite{Le15}, its Theorem 3.12 provides a similar construction of a family of Floer-style chain complexes. 

Next, we show that there exists an algebraic relation between any two slices of these variant Floer chain complexes. 

\begin{prop} \label{st-htp} For any $s, t \in [0,1]$, the two complexes $((\CF_{[0,1]})_*, \partial_s, \ell_s)$ and $((\CF_{[0,1]})_*, \partial_t, \ell_t)$ are chain homotopy equivalent. \end{prop}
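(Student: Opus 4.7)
The plan is to mimic Lemma \ref{quasi-equ}, now working over the common coefficient ring $\Lambda_{[0,1]}$. For any $s,t\in[0,1]$ with $s\neq t$, construct $\Lambda_{[0,1]}$-linear chain maps $\Phi_{s,t},\Psi_{t,s}\co(CF_{[0,1]})_*\to(CF_{[0,1]})_*$ and homotopy operators $K_s, K_t$ by counting solutions of parametrized Floer equations for homotopies of symplectic forms along the segment between $\omega_s$ and $\omega_t$. On a basis element set
\[
\Phi_{s,t}([x,w]) \,=\, \sum_{[y,v]}\sum_{A\in H_2^S(M)} n\bigl([x,w],T^A[y,v]\bigr)\,T^A[y,v],
\]
where $n$ counts the zero-dimensional component of the moduli space of trajectories satisfying the parametrized equation $\mathcal F^{s'}(u)=0$ for the Type I (ii) interpolation $\omega_{s'}=(1-\kappa(s'))\omega_s+\kappa(s')\omega_t$, asymptotic to $x$ at $-\infty$ and to $y$ at $+\infty$, and matching Conley–Zehnder indices; extend by $\Lambda_{[0,1]}$-linearity, and define $\Psi_{t,s}$ analogously with the interpolation reversed.

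The chain map property $\partial_t\circ\Phi_{s,t}=\Phi_{s,t}\circ\partial_s$ and its analogue for $\Psi_{t,s}$ are formal consequences of the standard Floer gluing/compactness analysis of the ends of the one-dimensional component of the same moduli space, so they require no modification from the classical case. What does require real care is well-definedness of the counts as elements of $\Lambda_{[0,1]}$: by Lemma \ref{int-prop} it suffices to verify the Novikov finiteness condition separately for $\omega_0$ and $\omega_1$. Fix $r\in\{0,1\}$; one needs a bound of the form $E(u)\le \lambda_0+\lambda_1[\omega_r](A)$, with $\lambda_0,\lambda_1$ independent of $A$, for every trajectory $u$ entering the coefficient of $T^A[y,v]$. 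When $r=t$ this is immediate from Corollary \ref{ene-est-cor2} after rewriting $\mathcal A_{\omega_t}(T^A[y,v])=\mathcal A_{\omega_t}([y,v])-[\omega_t](A)$. When $r\neq t$, pass from $\mathcal A_{\omega_t},\mathcal A_{\omega_s}$ to $\mathcal A_{\omega_r}$ using the identities $\mathcal A_{\omega_q}([\gamma,w])=\mathcal A_{\omega_p}([\gamma,w])+(p-q)\int_{D^2}w^*\alpha$ and $\int_{S^2}A^*\omega_r=\int_{S^2}A^*\omega_t+(r-t)\int_{S^2}A^*\alpha$, exactly as in the proof of Corollary \ref{t-ene-est}, and absorb the $\int_{S^1\times\R}u^*\alpha$-term using the claim (\ref{a-energy}). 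After this bookkeeping one obtains the required bound with constants depending only on $|\alpha|$, $s,t,r$, $[x,w]$, $[y,v]$. Gromov compactness then yields only finitely many $A$ with $[\omega_r](A)\le C$, establishing membership in $\Lambda_{[0,1]}$.

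For the homotopy equivalence, concatenating the Type I (ii) homotopies $\omega_s\rightsquigarrow\omega_t\rightsquigarrow\omega_s$ gives a homotopy that is homotopic, through homotopies of homotopies, to the constant homotopy at $\omega_s$ (a Type I (iii) configuration). Define $K_s$ by counting index-$1$ solutions of the resulting two-parameter parametrized Floer equation, and $K_t$ symmetrically. Standard gluing on the ends of the one-dimensional moduli space produces the identities $\Psi_{t,s}\circ\Phi_{s,t}-\I=\partial_s K_s+K_s\partial_s$ and $\Phi_{s,t}\circ\Psi_{t,s}-\I=\partial_t K_t+K_t\partial_t$. Well-definedness of $K_s,K_t$ over $\Lambda_{[0,1]}$ is verified by the same energy-bookkeeping scheme, now starting from Corollary \ref{ene-est-cor3} in place of Corollary \ref{ene-est-cor2}, and again invoking Lemma \ref{int-prop}.

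The main obstacle is precisely the multi-finiteness verification just described: the estimates output ``naturally'' by each homotopy are phrased in terms of $\mathcal A_{\omega_s}$ and $\mathcal A_{\omega_t}$, whereas membership in $\Lambda_{[0,1]}$ demands simultaneous energy control by $[\omega_0]$ and $[\omega_1]$. This is available only because the linear correction $\mathcal A_{\omega_r}-\mathcal A_{\omega_t}$ is given by $\alpha$-periods that can be absorbed into the Type II–style error term $C'\cdot E(u)$ (with $|\alpha|$ small enough that $1-C'>0$, as restricted in Remark \ref{restriction on alpha}); once this step is carried out, the rest of the construction is algebraically identical to the classical continuation-map/homotopy argument recalled in Lemma \ref{quasi-equ}.
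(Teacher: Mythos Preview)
Your proposal is correct and follows essentially the same approach as the paper's own proof: construct $\Phi_{s,t}$, its reverse, and the homotopies $K_s,K_t$ by counting solutions of the Type I (ii) and Type I (iii) parametrized Floer equations, invoke standard gluing for the chain-map and homotopy identities, and reduce the nontrivial well-definedness over $\Lambda_{[0,1]}$ to an energy bound obtained by converting $\mathcal A_{\omega_t}$ to $\mathcal A_{\omega_r}$ and absorbing the $\int_{S^1\times\R}u^*\alpha$ correction via the estimate (\ref{a-energy}). The only cosmetic difference is that you reduce the multi-finiteness check to $r\in\{0,1\}$ via Lemma \ref{int-prop}, whereas the paper writes the estimate uniformly for all $r\in[0,1]$; these are equivalent.
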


\begin{remark} Different from Lemma \ref{quasi-equ}, we can not get any quantitative comparison conclusion in Proposition \ref{st-htp}. From the proof given below, one can see that this comes from the ``non-uniform'' estimation from Proposition \ref{ene-est-cor2}, i.e., the bound depends on the symplectic area of the disk spanning the Hamiltonian 1-periodic orbit at the asymptotic end $s= -\infty$. On the other hand, one can view the family $\{((\CF_{[0,1]})_*, \partial_s, \ell_s)\}_{s \in [0,1]}$ from a different perspective. Since each $s$-slice $((\CF_{[0,1]})_*, \partial_s, \ell_s)$ provides a persistence module (\cite{CZ09}), this family provides a 2-dimensional persistence module. Continuity questions studied in this paper might be transferred into a stability problem of the invariants constructed from a higher dimensional persistence module. 
\end{remark}

\begin{proof} [Proof of Proposition \ref{st-htp}] We need to find a quadruple $(\Phi_{s,t}, \Phi_{t,s}, K_s, K_t)$ such that $\Phi_{s,t}$ and $\Phi_{t,s}$ are chain maps between $(\CF_{[0,1]})_*, \partial_s, \ell_s)$ and $(\CF_{[0,1]})_*, \partial_t, \ell_t)$ and $K_s, K_t$ are homotopies. First, choose a homotopy from $\omega_s$ to $\omega_t$ as in (\ref{dfn-int-htp}) parametrized by $\tau$, and consider the Floer operator $\mathcal F^{\tau}$ defined in Definition \ref{def-FO}. For any basis element $[x,w]$, define 
\begin{equation} \label{chain-map}
\Phi_{s,t}([x,w]) = \sum_{\tiny{\begin{array}{cc} [y,v] \in \mbox{basis} \\ \mu_{CZ}([x,w]) = \mu_{CZ}([y,v \#A]) \end{array}}} \sum_{A \in H_2^S(M)} n_A T^A [y,v] 
\end{equation} 
where $n_A$ counts the number of admissible $\mathcal F^{\tau}$-trajectories connecting $[x,w]$ and $[y, v \#A]$. We know that $\Phi_{s,t}$ is a chain map by the standard gluing argument. In order to show $\Phi_{s,t}$ acts on $(\CF_{[0,1]})_*$, we need to check that the output of $\Phi_{s,t}$ satisfies the finiteness condition of $\omega_r$ for any $r \in [0,1]$. Without loss of generality, assume $s<t$. By Proposition \ref{ene-est-cor2}, there exists a constant $C_{s,t}$ such that, 
\[ \mathcal A_{\omega_t}(T^A[y,v]) - \mathcal A_{\omega_s} ([x,w]) \leq -(1-C_{s,t}|\alpha|) E(u) + (s-t) \int_{D^2} w^* \alpha.\]
Meanwhile, evaluate $T^A[y,v]$ by $\mathcal A_{\omega_r}$ and one gets the following relation, 
\[ \mathcal A_{\omega_t}(T^A[y,v])  = \mathcal A_{\omega_r}(T^A [y,v]) + (r-t) \int_{S^1 \times \R} u^*\alpha + (r-t) \int_{D^2} w^*\alpha\]
where $\alpha = \omega_1 - \omega_0$. By (\ref{a-energy}) there exists some constant $C$ such that 
{\small \begin{align*}
 \mathcal A_{\omega_r}(T^A[y,v]) - \mathcal A_{\omega_s} ([x,w]) & \leq -(1-C_{s,t}|\alpha|) E(u) + (t-r) \int_{S^1 \times \R} u^*\alpha + (s-r) \int_{D^2} w^*\alpha\\
 & \leq  -(1-C_{s,t}|\alpha|) E(u) + C|\alpha|\cdot (t-r) E(u) + (s-r) \int_{D^2} w^* \alpha\\
 & \leq -(1-C'_{r,s,t}) E(u) + (s-r) \int_{D^2} w^*\alpha
\end{align*}}
for some constant $C'_{r,s,t}= (C_{s,t} + C \cdot (t-r))|\alpha|<1$. Also $ \mathcal A_{\omega_r}(T^A[y,v]) - \mathcal A_{\omega_s} ([x,w])  = -\int_{S^2} A^* \omega_r - N_3$ where $N_3 = \mathcal A_{\omega_s}([x,w])  -\mathcal A_{\omega_r}([y,v])$, independent of the sphere class $A$. Therefore, 
\[ E(u) \leq \frac{1}{1- C'_{r,s,t}} \left( \int_{S^2} A^* \omega_r + N_3 + (s-r)\int_{D^2} w^* \alpha \right).\]
If $\int_{S^2} A^* \omega_r \leq \lambda$ for any given $\lambda \in \R$, then since $1- C'_{r,s,t} >0$ for any $r \in [0,1]$,
\[ E(u) \leq \frac{\lambda + N_3 + (s-r)\int_{D^2} w^* \alpha}{1- C'_{r,s,t}} < \infty. \]
By Gromov compactness theorem, there are only finitely many sphere classes $A$. Hence, the output of (\ref{chain-map}) also satisfies the finiteness condition of $\omega_r$ for any $r \in [0,1]$.  Symmetrically, we can define $\Phi_{t,s}$ and prove it is a well-defined chain map. 

Second, $\Phi_{t,s} \circ \Phi_{s,t}$ is (Floer) homotopic to $\mathds{1}_s$ on $((\CF_{[0,1]})_*, \partial_s)$, by the standard Floer theory. Choose a homotopy between a symmetric homotopy of $\omega_s$ passing through $\omega_t$ and the identity homotopy $\omega_s$. We can construct a map $K_s$ as in Lemma \ref{quasi-equ} as follows,  
\begin{equation} \label{s-htp}
 K_s ([x,w]) = \sum_{\tiny{\begin{array}{cc} [y,v] \in \mbox{basis} \\ \mu_{CZ}([x,w]) +1= \mu_{CZ}([y,v\#A]) \end{array}}} \sum_{A \in H_2^S(M)} n_A T^A [y,v] 
 \end{equation}
where $n_A$ counts the number of pairs $(u, \lambda)$ where $u$ is an admissible $\mathcal F^{\tau, \lambda}_{\rm sym}$-trajectory connecting $[x,w]$ and $[y, v \# A]$. Again, in order to show $K_s$ acts on $(\CF_{[0,1]})_*$, we need to check that the output of $K_s$ satisfies the finiteness condition of $\omega_r$ for any $r \in [0,1]$. By Proposition \ref{ene-est-cor3} (applied to $\omega_0 = \omega_s$ and $\omega_1 = \omega_t$), 
\[ A_{\omega_s}(T^A [y,v]) - \mathcal A_{\omega_s}([x,w]) \leq -(1-C_{s,t}|\alpha|)E(u).\]
Evaluate $T^A[y,v]$ by $\mathcal A_{\omega_r}$, and one gets the following relation, 
\[ \mathcal A_{\omega_s}(T^A[y,v])  = \mathcal A_{\omega_r}(T^A [y,v]) + (r-s) \int_{S^1 \times \R} u^*\alpha + (r-s) \int_{D^2} w^*\alpha.\]
Hence, by a similar computation as above, 
\[  \mathcal A_{\omega_r}(T^A[y,v]) - \mathcal A_{\omega_s} ([x,w]) \leq -(1-C_{s,r}|\alpha|) E(u) + (s-r) \int_{D^2} w^*\alpha. \]
Then the same argument as above implies the finiteness condition of $\omega_r$. Thus we get the conclusion. \end{proof}

\subsection{Revised Floer homology} Denote by ${\rm CM}_{[0,1]}$ the coefficient extension of Morse chain complex ${\rm CM}_*(M; \mathbb Z)$ over ring $\Lambda_{[0,1]}$. In the following diagram, let us summarize the relations between various chain complexes that we have encountered so far. 
\begin{equation} \label{C}
\xymatrix{
  & (\CF_{[0,1]}, \partial_0) \ar[r]^-{\iota_0} \ar[d]^-{\Phi_{0,t}} & ({\widetilde {\CF}}_0, \partial_0) \ar[r]^-{R_0} \ar@{-->}[d]^-{\tiny{\mbox{no well-defined chain map}}} & (\CF_*(M, H, J, \omega_0), \partial_0) \\
({\rm CM}_{[0,1]}, \partial_{\rm Morse}) \ar[r]^-{PSS_t} \ar[ur]^-{PSS_0} \ar[rd] & (\CF_{[0,1]}, \partial_t)  \ar[d] \ar[r]^-{\iota_t} & ({\widetilde {\CF}}_t, \partial_t) \ar@{-->}[d]  \\
  & (\CF_{[0,1]}, \partial_1) \ar[r]^-{\iota_1} \ar@{~>}[d] & ({\widetilde {\CF}}_1, \partial_1) \ar[r]^-{R_1} & (\CF_*(M, H, J, \omega_1), \partial_1) \\
  & \mbox{$\Lambda_{[0,1]}$-module}}
  \end{equation}
where ${\widetilde {\CF}}_t = \CF_{[0,1]} \otimes_{\Lambda_{[0,1]}} \Lambda_{\omega_t}$ is a free $\Lambda_{\omega_t}$-module for any $t \in [0,1]$. Take the homology of each chain complex, and one gets the following picture, 
\begin{equation} \label{H}
\xymatrix{
  & \HF_{[0,1], 0} \ar[r]^-{(\iota_0)_*} \ar[d]^-{(\Phi_{0,t})_*} & {\widetilde {\HF}}_0 \ar[r]^-{(R_0)_*} \ar@{-->}[d]^-{\tiny{\mbox{no well-defined map}}} & \HF_{\omega_0} \\
\QH_{[0,1]} \ar[r]^-{(PSS_t)_*} \ar[ur]^-{(PSS_0)_*} \ar[rd] & \HF_{[0,1],t} \ar[d] \ar[r]^-{(\iota_t)_*} & {\widetilde {\HF}}_t\ar@{-->}[d]  \\
  & \HF_{[0,1], 1} \ar[r]^-{(\iota_1)_*} \ar@{~>}[d] & {\widetilde {\HF}}_1 \ar[r]^-{(R_1)_*} & {\HF}_{\omega_1} \\
  & \mbox{$\Lambda_{[0,1]}$-module}}
 \end{equation}
where $\QH_{[0,1]} = H_*(M; \K) \otimes \Lambda_{[0,1]}$. Moreover, $\HF_{[0,1], t}$ is the homology of Floer chain complex $((\CF_{[0,1]})_*, \partial_t)$, ${\widetilde {\HF}}_t$ is the homology of Floer chain complex $(({\widetilde {\CF}}_t)_*, \partial_t)$ for any $t \in [0,1]$ and $\HF_{\omega_i}$ is the homology of Floer chain complex $(\CF_*(M, H, J, \omega_i), \partial_i)$ for $i =0, 1$.\\

By Proposition \ref{st-htp}, $\HF_{[0,1], t}$ are all isomorphic to each other. Moreover, one can show that for any $t \in [0,1]$, 
\[ (\Phi_{0,t})_* \circ (PSS_0)_* = (PSS_t)_*. \]
A natural question is how these Floer homologies change when we extend the coefficients in each step. First, Universal Coefficient Theorem (Corollary 7.56 (ii) and Theorem 7.15 in \cite{Rot09}) says that, for each degree $k \in \Z$, we have the following splitting,
\begin{equation} \label{uct-splitting} 
H_k(\widetilde{\CF}_i; \Lambda_{\omega_i}) \simeq H_k(\CF_{[0,1]}; \Lambda_{[0,1]}) \otimes_{\Lambda_{[0,1]}} \Lambda_{\omega_i} \oplus {\rm Tor}^{\Lambda_{[0,1]}}(H_{k-1}(\CF_{[0,1]}), \Lambda_{\omega_i}) 
\end{equation}
where ${\rm Tor}^{\Lambda_{[0,1]}} (H_{k-1}(\CF_{[0,1]}), \Lambda_{\omega_i})$ is a torsion module over $\Lambda_{[0,1]}$. It is not easy to see the algebraic relation between $\Lambda_{[0,1]}$ and $\Lambda_{\omega_i}$ if we try to apply some well-known fact such as that a module over a PID is flat if and only if it is torsion-free (by Lemma \ref{int-domain}, we only know $\Lambda_{[0,1]}$ is a domain). Fortunately, we still have the following property claiming that the torsion part vanishes, due to the existence of PSS-maps who transfer our discussion back to the Morse homology. 

\begin{prop} \label{no-torsion} For any $k \in \Z$, 
\[ H_k(\widetilde{\CF}_i; \Lambda_{\omega_i}) \simeq H_k(\CF_{[0,1]}; \Lambda_{[0,1]}) \otimes_{\Lambda_{[0,1]}} \Lambda_{\omega_i}\]
for $i =0, 1$, or simply $\widetilde{\HF}_i \simeq \HF_{[0,1]} \otimes_{\Lambda_{[0,1]}} \Lambda_{\omega_i}$. In particular 
\[ {\r}_{\Lambda_{\omega_0}} H_k(\widetilde{\CF}_0; \Lambda_{\omega_0}) = {\r}_{\Lambda_{\omega_1}} H_k(\widetilde{\CF}_1; \Lambda_{\omega_1}).\]
 \end{prop}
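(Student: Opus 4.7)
The plan is to show that $HF_{[0,1]}$ is a free (hence flat) $\Lambda_{[0,1]}$-module, so that the Tor term in the universal coefficient splitting (\ref{uct-splitting}) vanishes and the desired isomorphism follows. Since $H_*(M;\K)$ is a finite-dimensional $\K$-vector space, $QH_{[0,1]} = H_*(M;\K) \otimes_\K \Lambda_{[0,1]}$ is visibly a free $\Lambda_{[0,1]}$-module of rank $\dim_\K H_*(M;\K)$ in each degree. So it suffices to produce a $\Lambda_{[0,1]}$-linear isomorphism between $QH_{[0,1]}$ and $HF_{[0,1]}$, and the natural candidate is the PSS map $(PSS_t)_*: QH_{[0,1]} \to HF_{[0,1],t}$ that already appears in diagrams (\ref{C}) and (\ref{H}).

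To put PSS into the present framework, I would fix a Morse function $f$ on $M$ and form $CM_{[0,1]} = CM_*(f) \otimes_\K \Lambda_{[0,1]}$, define $PSS_t$ on chains by the classical count of ``spike disks'' (a half-infinite negative $f$-gradient flow line out of a critical point, glued to a Floer-type half-cylinder with cut-off Hamiltonian, asymptotic to a Hamiltonian orbit and solving the unperturbed Floer equation for $\omega_t$), and define an inverse map $\widetilde{PSS}_t$ by the mirror configuration. The main new thing to check is multi-finiteness over $\Lambda_{[0,1]}$, which I would verify by imitating Proposition \ref{well-dfn of bo}: the action--energy identity on a spike disk contributing a sphere class $A$ bounds $E$ in terms of $\int_{S^2}A^*\omega_t$ up to a constant depending only on the chosen endpoints, and the variant estimate modeled on Corollary \ref{nt-ene-est} (whose proof only used the uniform $|\alpha|$-bound from (\ref{a-energy}), applied here to the Floer part of the spike) transfers this control to $\omega_r$ for every $r\in[0,1]$. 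Consequently, for each fixed $\lambda \in \R$ only finitely many sphere classes $A$ with $[\omega_r](A) \leq \lambda$ can appear, so the sums defining $PSS_t$ and $\widetilde{PSS}_t$ lie in $\Lambda_{[0,1]}$.

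Once $PSS_t$ and $\widetilde{PSS}_t$ are established as chain maps over $\Lambda_{[0,1]}$, showing $(PSS_t)_*$ is an isomorphism proceeds via the standard Floer gluing/stretching argument: $\widetilde{PSS}_t \circ PSS_t$ and $PSS_t \circ \widetilde{PSS}_t$ are chain-homotopic to the respective identities through one-parameter families of spike/anti-spike configurations whose ``neck'' is stretched, and the chain homotopies satisfy multi-finiteness for precisely the same reason. Granting this, $HF_{[0,1]}$ is free over $\Lambda_{[0,1]}$ of the same rank as $QH_{[0,1]}$ in each degree. Flatness makes ${\rm Tor}^{\Lambda_{[0,1]}}(H_{k-1}(CF_{[0,1]}),\Lambda_{\omega_i})$ vanish in (\ref{uct-splitting}), yielding $\widetilde{HF}_i \simeq HF_{[0,1]} \otimes_{\Lambda_{[0,1]}} \Lambda_{\omega_i}$, and the rank equality is then immediate because $\widetilde{HF}_0$ and $\widetilde{HF}_1$ are both scalar extensions of the common free module $HF_{[0,1]}$, so both have $\Lambda_{\omega_i}$-rank equal to $\dim_\K H_k(M;\K)$. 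The main obstacle is the careful verification of multi-finiteness for every auxiliary moduli space entering the PSS package and its gluing homotopies: these involve mixed Morse/Floer endpoints and interpolating symplectic data, and each contributing configuration must be shown to satisfy an energy bound compatible with \emph{both} $\omega_0$ and $\omega_1$ simultaneously, in the spirit of Corollary \ref{nt-ene-est} but for geometry more elaborate than that of a plain cylinder.
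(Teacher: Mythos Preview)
Your argument is correct, but it takes a different route from the paper's. You prove directly that the PSS map over $\Lambda_{[0,1]}$ is a chain homotopy equivalence (by constructing the inverse PSS and the gluing homotopies and verifying multi-finiteness for each), and then deduce that $HF_{[0,1]}$ is free so that the Tor term in (\ref{uct-splitting}) vanishes. The paper instead avoids building the full PSS package over $\Lambda_{[0,1]}$: it passes through Morse homology over $\Z$, uses that $\Lambda_{[0,1]}$ and $\Lambda_{\omega_i}$ are torsion-free over the PID $\Z$ to kill the Morse-side Tor terms, and then runs a diagram chase whose only analytic input beyond a chain-level PSS map is the \emph{classical} PSS isomorphism $h$ over $\Lambda_{\omega_i}$. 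Since the map $p$ in the UCT sequence is always injective and $p\circ g = h\circ i$ with $h,i$ isomorphisms, $p$ is forced to be an isomorphism without ever needing to know that the $\Lambda_{[0,1]}$-level PSS is itself a quasi-isomorphism.

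The trade-off is this: your route is conceptually cleaner and gives the stronger statement $HF_{[0,1]}\simeq QH_{[0,1]}$ as free $\Lambda_{[0,1]}$-modules, but it obliges you to redo the PSS analysis (spike disks, their inverses, and the stretching homotopies) and check the simultaneous $\omega_0$/$\omega_1$ finiteness for each moduli space; the paper's route is more economical in that it leans entirely on the already-established PSS isomorphism over a single Novikov ring and a purely algebraic diagram chase, at the cost of being less transparent about why the torsion vanishes.
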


\begin{proof} Recall that ${\rm CM}_{[0,1]}  = {\rm CM}(M; \Z) \otimes_{\Z} \Lambda_{[0,1]}$. By Universal Coefficient Theorem, 
\[ H_k({\rm CM}_{[0,1]}; \Lambda_{[0,1]}) \simeq H_k({\rm CM}) \otimes_{\Z} \Lambda_{[0,1]} \oplus {\rm Tor^{\Z}}(H_{k-1}({\rm CM}_{[0,1]}), \Lambda_{[0,1]}) \] 
where $H_*({\rm CM}) := H_*({\rm CM}; \Z)$. By Lemma \ref{int-domain}, $\Lambda_{[0,1]}$ is an integral domain, so it is torsion-free as a $\Z$-module, which implies the flatness since $\Z$ is a PID. Therefore, Tor functor vanishes, that is, 
\begin{equation} \label{01} 
H_*({\rm CM}) \otimes_{\Z} \Lambda_{[0,1]} \simeq H_*({\rm CM}_{[0,1]}; \Lambda_{[0,1]}).
\end{equation}
By the same argument, 
\begin{equation} \label{03}
H_*({\rm CM}) \otimes_{\Z} \Lambda_{\omega_i} \simeq H_*({\rm CM} \otimes_{\Z} \Lambda_{\omega_i}; \Lambda_{\omega_i}).
\end{equation}
Together, we get the following relations, 
\[ H_*({\rm CM})\otimes_{\Z} \Lambda_{\omega_i} = \left(H_*({\rm CM}) \otimes_{\Z} \Lambda_{[0,1]}\right) \otimes_{\Lambda_{[0,1]}} \Lambda_{\omega_i} \simeq  H_*({\rm CM}_{[0,1]}; \Lambda_{[0,1]})\otimes_{\Lambda_{[0,1]}} \Lambda_{\omega_i}.  \]

On the other hand, similarly to (\ref{uct-splitting}), we have 
\[ H_*({{\rm CM}_i}; \Lambda_{\omega_i}) \simeq H_*({\rm CM}_{[0,1]}; \Lambda_{[0,1]}) \otimes_{\Lambda_{[0,1]}} \Lambda_{\omega_i} \oplus {\rm Tor}^{\Lambda_{[0,1]}} (H_{*-1}({\rm CM}_{[0,1]}), \Lambda_{\omega_i}) \]
where ${{\rm CM}_i} = {\rm CM}_{[0,1]} \otimes_{\Lambda_{[0,1]}} \Lambda_{\omega_i}$. Consider the following commutative diagram
\[
{\small \xymatrixcolsep{1pc} \xymatrix{ 
(H_*({\rm CM}) \otimes_{\Z} \Lambda_{[0,1]}) \otimes_{\Lambda_{[0,1]}}\Lambda_{\omega_i} \ar[d]^j \ar@/_5pc/[dd]_t\\
H_*({\rm CM}) \otimes_{\Z} \Lambda_{\omega_i} \ar[r]^-{q} \ar[d]^s & H_*({\rm CM} \otimes_{\Z} \Lambda_{\omega_i}; \Lambda_{\omega_i}) \ar[d]^f \\
H_*({\rm CM}_{[0,1]}; \Lambda_{[0,1]}) \otimes_{\Lambda_{[0,1]}} \Lambda_{\omega_i} \ar[r]^-i \ar[d]^g & H_*({\rm CM}_i; \Lambda_{\omega_i}) \ar[r] \ar[d]^h& {\rm Tor}^{\Lambda_{[0,1]}}(H_{*-1}({\rm CM}_{[0,1]}), \Lambda_{\omega_i}) \ar[d]\\
H_*(\CF_{[0,1]}; \Lambda_{[0,1]}) \otimes_{\Lambda_{[0,1]}} \Lambda_{\omega_i} \ar[r]^-p & H_*(\CF_i; \Lambda_{\omega_i}) \ar[r] & {\rm Tor}^{\Lambda_{[0,1]}}(H_{*-1}(\CF_{[0,1]}), \Lambda_{\omega_i}).}}
\]
In this diagram,
\begin{itemize}
\item $f$ is an identity map because ${{\rm CM}_i} = {\rm CM} \otimes_{\Z} \Lambda_{\omega_i}$; 
\item $q$ is an isomorphism because of (\ref{03}); 
\item $g$ and $h$ are PSS-maps (see \cite{PSS96}), so isomorphisms; 
\item $j$ is an identity map due to the extension of coefficients;
\item  $t$ is an isomorphism because of (\ref{01}). 
\end{itemize}
Therefore, $t$ is an isomorphism, which implies that $s$ is an isomorphism. This implies $i$ is an isomorphism, and then $p$ is an isomorphism. \end{proof} 

Finally, since $\Lambda^{\mathcal K, \Gamma_{\omega_i}}$ is a field, any torsion over $\Lambda^{\mathcal K, \Gamma_{\omega_i}}$ always vanishes. Hence, one gets the following result. 

\begin{cor} \label{no-torsion2} For any $k \in \Z$, 
\[ H_k(\CF(M, J, H, \omega_i); \Lambda^{\mathcal K, \Gamma_{\omega_i}}) \simeq H_k(\widetilde{\CF}_i; \Lambda_{\omega_i}) \otimes_{\Lambda_{\omega_i}} \Lambda^{\mathcal K, \Gamma_{\omega_i}}\]
for $i =0, 1$, or simply $\HF_{\omega_i} = \widetilde{\HF}_{[0,1]} \otimes_{\Lambda_{[0,1]}} \Lambda_{\omega_i}$. In particular,
\[ {\r}_{\Lambda^{\K, \Gamma_{\omega_0}}} (\HF_{\omega_0})_k = {\r}_{\Lambda^{\K, \Gamma_{\omega_1}}} (\HF_{\omega_1})_k. \]
\end{cor}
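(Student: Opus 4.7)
The plan is to reduce Corollary \ref{no-torsion2} directly to the Universal Coefficient Theorem, exploiting the fact that $\Lambda^{\mathcal{K},\Gamma_{\omega_i}}$ is a field rather than merely a domain. By construction, we have $CF(M,J,H,\omega_i) = \widetilde{CF}_i \otimes_{\Lambda_{\omega_i}} \Lambda^{\mathcal{K},\Gamma_{\omega_i}}$ as described in diagram (\ref{C}) of the introduction. So the first step is to apply the Universal Coefficient Theorem (as in Corollary 7.56(ii) and Theorem 7.15 of \cite{Rot09}) to this extension of coefficients, obtaining, analogously to (\ref{uct-splitting}), a splitting
\[
H_k(CF(M,J,H,\omega_i);\Lambda^{\mathcal{K},\Gamma_{\omega_i}}) \simeq H_k(\widetilde{CF}_i;\Lambda_{\omega_i}) \otimes_{\Lambda_{\omega_i}} \Lambda^{\mathcal{K},\Gamma_{\omega_i}} \oplus \mathrm{Tor}^{\Lambda_{\omega_i}}\bigl(H_{k-1}(\widetilde{CF}_i),\, \Lambda^{\mathcal{K},\Gamma_{\omega_i}}\bigr).
\]

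The second step is to kill the Tor term. Here, the comment immediately preceding the statement already indicates the mechanism: $\Lambda^{\mathcal{K},\Gamma_{\omega_i}}$ is a \emph{field} (see the definition in (\ref{nov-field})), hence in particular a flat $\Lambda_{\omega_i}$-module via the surjection $R_{\omega_i}$ from (\ref{trivial-ker}). Since flat modules make all higher Tor functors vanish, the Tor summand is zero. This gives the claimed isomorphism $HF_{\omega_i} \simeq \widetilde{HF}_i \otimes_{\Lambda_{\omega_i}} \Lambda^{\mathcal{K},\Gamma_{\omega_i}}$, and combining with Proposition \ref{no-torsion} yields $HF_{\omega_i} \simeq HF_{[0,1]} \otimes_{\Lambda_{[0,1]}} \Lambda_{\omega_i} \otimes_{\Lambda_{\omega_i}} \Lambda^{\mathcal{K},\Gamma_{\omega_i}}$, consistent with the second formulation in the statement.

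The third step is the rank equality, which I would derive in two independent (and mutually consistent) ways. The clean way is to simply invoke Arnold's conjecture (Theorem \ref{arnold}) at both ends, giving
\[
\mathrm{rank}_{\Lambda^{\mathcal{K},\Gamma_{\omega_i}}}(HF_{\omega_i})_k = \dim_{\mathcal{K}} \bigoplus_{j\equiv k \bmod 2N_m} H_j(M;\mathcal{K}),
\]
whose right hand side is independent of $i$. As a sanity check, one can also argue intrinsically: by Proposition \ref{no-torsion} the $\Lambda_{\omega_i}$-rank of $\widetilde{HF}_i$ equals the $\Lambda_{[0,1]}$-rank of $HF_{[0,1]}$ (since free base change preserves rank over a domain), and rank is further preserved when extending scalars along $R_{\omega_i}$ to the field $\Lambda^{\mathcal{K},\Gamma_{\omega_i}}$ because the tensor product of a free module with a nonzero module over a domain has predictable rank.

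There is no real obstacle here; the work was done in establishing $(CF_{[0,1]})_*$ and proving Proposition \ref{no-torsion}. The only subtlety worth flagging is ensuring that the coefficient chain $\Lambda_{[0,1]} \hookrightarrow \Lambda_{\omega_i} \twoheadrightarrow \Lambda^{\mathcal{K},\Gamma_{\omega_i}}$ is compatible with the Floer boundary operator $\partial_i$ at each stage so that the Universal Coefficient Theorem applies on the nose; this is already implicit in diagram (\ref{C}), where $\partial_i$ is defined over $\Lambda_{[0,1]}$ by Proposition \ref{well-dfn of bo} and descends to $\widetilde{CF}_i$ and $CF(M,J,H,\omega_i)$ by base change.
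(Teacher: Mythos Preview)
Your argument tracks the paper's one-line justification (``since $\Lambda^{\mathcal K,\Gamma_{\omega_i}}$ is a field, torsion vanishes'') essentially verbatim, and the overall structure---UCT splitting, vanishing of the Tor summand, then rank via Theorem~\ref{arnold}---is exactly what the paper intends.

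That said, your step two contains a slip worth flagging (the paper is equally terse on this point). The implication ``$\Lambda^{\mathcal K,\Gamma_{\omega_i}}$ is a field $\Rightarrow$ $\Lambda^{\mathcal K,\Gamma_{\omega_i}}$ is flat over $\Lambda_{\omega_i}$'' does not hold in general: a field need not be flat over a ring that merely surjects onto it. The toy model of $R_{\omega_i}$ is the augmentation $\mathcal K[S,S^{-1}]\to\mathcal K$ sending $S\mapsto 1$, and there ${\rm Tor}_1^{\mathcal K[S,S^{-1}]}(\mathcal K,\mathcal K)\simeq\mathcal K\neq 0$, so $\mathcal K$ is not flat. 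What actually kills the Tor term is the \emph{other} variable: by the PSS isomorphism (the map $h$ in the commutative diagram in the proof of Proposition~\ref{no-torsion}), one has $H_{k-1}(\widetilde{CF}_i;\Lambda_{\omega_i})\simeq H_{k-1}(M;\mathcal K)\otimes_{\mathcal K}\Lambda_{\omega_i}$, which is a \emph{free} $\Lambda_{\omega_i}$-module, and ${\rm Tor}$ of a free module against anything vanishes. With that correction your argument goes through, and both of your routes to the rank equality are fine.
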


\section{Variant spectral invariants; proof of Theorem \ref{upper-semi}} \label{sec-si} 

Fix any class $a \in \QH_{[0,1]}$. Using $(PSS_t)_*$, one gets a class $(PSS_t)_*(a)$ in $\HF_{[0,1], t}$. Recall that for each $t \in [0,1]$, $(\CF_{[0,1]}, \partial_t, \ell_t)$ is a filtered complex with respect to the symplectic structure $\omega_t$. 

\begin{dfn} \label{t-si} Given a Hamiltonian system $(M, \omega, H)$, set $\omega_0 = \omega$, and $\omega_1$ is a reduced perturbation of $\omega_0$. For $a \in \QH_{[0,1]}$, we call the following value {\it $t$-spectral invariant} associated to $a$,
\[ \rho_{t}(a,H) = \inf\{\ell_t(\alpha_t)\,| \, [\alpha_t] = (PSS_t)_*(a) \}\]
where $\alpha_t \in (\CF_{[0,1]}, \partial_t)$. 
\end{dfn}

Recall that for each $t \in [0,1]$, there is a well-defined spectral invariant $\tilde{\rho}_t(a, H)$ over the coefficient ring $\Lambda_{\omega_t}$ for any $a \in \QH_{[0,1]}$. This is the abstract spectral invariant defined in \cite{Ush08}. Meanwhile, $\rho(a, H; \omega_t)$ denotes the standard spectral invariant over the Novikov {\bf field} $\Lambda^{\mathcal K, \Gamma_{\omega}}$. It is readily to see that $\tilde{\rho}_t(a, H) = {\rho}(H, a; \omega_t)$ for any $t \in [0,1]$ and any $a \in \QH_{[0,1]}$. On the other hand, we have the following important property. 

\begin{lemma} \label{prop-si} Let $t \in [0,1]$ and a non-degenerate Hamiltonian $H \in C^{\infty}(\R/\Z \times M)$.
\begin{itemize}
\item[(1)] (finiteness) For any nonzero $a \in \QH_{[0,1]}$, $\rho_t(a, H) > - \infty$. 
\item[(2)] (realization) For any nonzero $a \in \QH_{[0,1]}$, there exists an $\alpha_t \in \CF_{[0,1]}$ such that 
\[ \rho_t(a, H) = \ell_t(\alpha_t).\]
\item[(3)] (extension) For any $a \in \QH_{[0,1]}$, $\rho_t(a, H) = \tilde\rho_t(a,H)$. 
\end{itemize}
\end{lemma}

Note that Lemma \ref{prop-si} reduces the comparison between $\rho(a, H; \omega_0)$ and $\rho(a, H; \omega_1)$ to the comparison between $\rho_0(a, H)$ and $\rho_1(a, H)$, which has the advantage that they are over the same coefficient ring $\Lambda_{[0,1]}$. 

\begin{proof} $(1)$: Regard $a = a\otimes \mathds{1}$ as an element in $\widetilde{\QH}_t$ over the coefficient ring $\Lambda_{\omega_t}$, still non-zero. By Theorem 1.3 in \cite{Ush08}, we know $\tilde{\rho}_t(a, H) > -\infty$. On the other hand, by definition, for any $\ep>0$, there exists some $\alpha_t \in (\CF_{[0,1]}, \partial_t)$ representing $[\alpha_t] = (PSS_t)_*(a)$ such that 
\[ \ell_t(\alpha_t) \leq \rho_t(a, H) + \ep.\]
Then, in $\widetilde{\CF}_t$ over the coefficient ring $\Lambda_{\omega_t}$, $\alpha_t (= \alpha_t \otimes \mathds{1})$ also represents $(PSS_t)_*(a)$. By definition, $\tilde{\rho}_t(a, H) \leq \ell_t(\alpha_t)  \leq \rho_t(a, H)+ \ep$. So $\rho_t(a, H) > -\infty$. Therefore, we get the conclusion $(1)$. 

$(2)$ and $(3)$: The same argument works for $t$-spectral invariants for any $t \in [0,1]$, so we only prove the case for $t = 0$. Since $a$ is a non-zero element in $\QH_{[0,1]}$, there exists some chain $\alpha \in \CF_{[0,1]}$ such that $(PSS_0)_*(a) = [\alpha]$. On the other hand, viewing $a$ as an element in $\widetilde{\QH}_0$ over the coefficient ring $\Lambda_{\omega_0}$, Theorem 1.4 in \cite{Ush08} says that there exists an optimal boundary $\partial_0 \tilde y$ in the sense that
\[ \tilde{\rho}_0(a, H) = \ell_0(\alpha - \partial_0 \tilde y)\]
where $\tilde{y} \in \widetilde{\CF}_0$ (but not necessarily in $\CF_{[0,1]}$). Decompose $y$ into the following two parts, 
\begin{equation} \label{decomp}
\tilde y = y_f + y_* 
\end{equation}
where $\ell_0(y_*) \leq \tilde{\rho}_0(a, H)$. Then by the finiteness condition, the sub-chain $y_f$ contains only finitely many terms. Also,
\[ \ell_0(\alpha - \partial_0 y_f) = \ell_0(\alpha - \partial_0 \tilde y + \partial_0 y_*) = \ell_0 (\alpha - \partial_0 \tilde y) \]
where the final equality comes from the relation $\ell_0(\partial_0 y_*) < \tilde{\rho}_0(a, H) = \ell_0(\alpha - \partial_0 \tilde y)$. In other words, $\partial_0 y_f$ is also an optimal boundary for spectral invariant $\tilde{\rho}_0(a, H)$. Since $y_f$ has only finitely many terms, certainly $y_f \in \CF_{[0,1]}$. Proposition \ref{well-dfn of bo} implies $\partial_0 y_f$ is also in $\CF_{[0,1]}$, and then $\alpha - \partial_0 y_f$ is an element in $\CF_{[0,1]}$. Therefore,
\begin{equation} \label{sp-ine1}
\tilde{\rho}_0(a, H) = \ell_0(\alpha- \partial_0 y_f) \geq \rho_0(a, H).
\end{equation}
On the other hand, for any $\ep>0$, there exists some $\alpha' \in \CF_{[0,1]}$ such that $[\alpha'] = (PSS_0)_*(a)$ and $\ell_0(\alpha') \leq \rho_0(a, H) + \ep$. Viewing $\alpha'$ as an element in $\widetilde{\CF}_0$ and $a$ as an element in $\widetilde{\QH}_0$, $\alpha'$ also represents $a$ and hence $\tilde{\rho}_0(a, H) \leq \ell_0(\alpha')$. Therefore, 
\begin{equation} \label{sp-ine2}
\tilde{\rho}_0(a, H) \leq \ell_0(\alpha') \leq \rho_0(a, H) + \ep.
\end{equation}
Since this is true for any $\ep>0$, we get $\tilde{\rho}_0(a,H) \leq \rho_0(a,H)$, which together with (\ref{sp-ine1}) finishes the proof.  \end{proof}
 
Before giving the proof of Theorem \ref{upper-semi}, we will prove the following lemma on the continuity of filtration functions on a fixed chain in $\CF_{[0,1]}$. 

\begin{lemma} \label{pc-fil} For any fixed chain $c \in \CF_{[0,1]}$, the function $t \mapsto \ell_t(c)$ is continuous on $[0,1]$. \end{lemma}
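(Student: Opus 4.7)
The plan is to reduce continuity of $\ell_t(c)$ to continuity in $t$ of the Novikov valuation $\nu_t$ on $\Lambda_{[0,1]}$ induced by $\omega_t$, and then to exploit the simultaneous Novikov finiteness at both $\omega_0$ and $\omega_1$ (Lemma~\ref{int-prop}) to obtain the latter.

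Since $M$ is closed, only finitely many contractible Hamiltonian $1$-periodic orbits exist, so I may fix a basis of capped orbits $[x_1,w_1],\ldots,[x_n,w_n]$ and decompose $c=\sum_{i=1}^n\lambda_i\,[x_i,w_i]$ with $\lambda_i=\sum_{A\in H_2^S(M)} a_{i,A}T^A\in\Lambda_{[0,1]}$. Using $\mathcal A_{\omega_t}(T^A[x_i,w_i])=\mathcal A_{\omega_t}([x_i,w_i])-[\omega_t](A)$ together with the definition of $\ell_t$, one obtains
\[ \ell_t(c)=\max_{i\,:\,\lambda_i\neq 0}\Bigl(\mathcal A_{\omega_t}([x_i,w_i])-\nu_t(\lambda_i)\Bigr), \]
where $\nu_t(\lambda_i):=\min\{[\omega_t](A):a_{i,A}\neq 0\}$. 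Because $\omega_t=(1-t)\omega_0+t\omega_1$, the action $\mathcal A_{\omega_t}([x_i,w_i])$ is affine (hence continuous) in $t$, so it suffices to prove $t\mapsto\nu_t(\lambda)$ is continuous on $[0,1]$ for every nonzero $\lambda\in\Lambda_{[0,1]}$.

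Upper semicontinuity at $t_0\in[0,1]$ is immediate: pick $A^*$ realizing $\nu_{t_0}(\lambda)$ (which exists by multi-finiteness), and observe $\nu_t(\lambda)\leq[\omega_t](A^*)\to[\omega_{t_0}](A^*)=\nu_{t_0}(\lambda)$. For lower semicontinuity, take any subsequence $t_k\to t_0$ with $\nu_{t_k}(\lambda)\to L$ and pick $A_k$ realizing $\nu_{t_k}(\lambda)$. If the sequence $\{A_k\}$ visits only finitely many classes, pass to a constant subsequence $A_k\equiv A^*$, yielding $L=[\omega_{t_0}](A^*)\geq\nu_{t_0}(\lambda)$. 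Otherwise $\{A_k\}$ contains infinitely many distinct elements of the support of $\lambda$, and the Novikov finiteness conditions for both $\omega_0$ and $\omega_1$ force $[\omega_0](A_k),[\omega_1](A_k)\to+\infty$; since $\nu_{t_k}(\lambda)=(1-t_k)[\omega_0](A_k)+t_k[\omega_1](A_k)$ is a convex combination with $t_k\in[0,1]$, one has $\nu_{t_k}(\lambda)\geq\min\bigl([\omega_0](A_k),[\omega_1](A_k)\bigr)\to+\infty$, contradicting $L<+\infty$. So only the first case can occur, and continuity of $\nu_t$ follows.

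The main obstacle, and the only place the hypothesis $c\in CF_{[0,1]}$ is genuinely used (as opposed to $c\in\widetilde{CF}_{t_0}$ for a single $t_0$), is precisely this lower-semicontinuity step: for a generic element of $\Lambda_{\omega_{t_0}}\setminus\Lambda_{[0,1]}$, Example~\ref{counter-ex} already shows that the minimizers $A_k$ really can escape to infinity along a direction invisible to $[\omega_{t_0}]$, and $\nu_t$ fails to be continuous at $t_0$. The simultaneous finiteness condition built into $\Lambda_{[0,1]}$ is exactly what pins the minimizers down. Given continuity of $\nu_t$, the lemma follows by taking a finite maximum of continuous functions of $t$.
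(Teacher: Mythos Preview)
Your proof is correct. Both you and the paper reduce the question to continuity of the Novikov valuation $\nu_t$ on $\Lambda_{[0,1]}$, but the arguments for that key step differ. The paper identifies the support of $c$ with a discrete point set in the $(g_0,g_1)$-plane, parametrizes $t\to 0$ by $\lambda=\frac{1-t}{t}\to\infty$, and interprets $\bar\nu_\lambda$ as the minimal $y$-intercept among lines of slope $-\lambda$ through the support points; it then argues geometrically that the optimal point eventually stabilizes as $\lambda\to\infty$ because the region in which a competing point could lie shrinks to zero width, invoking discreteness. Your approach instead splits into upper and lower semicontinuity directly and handles the lower half by a clean dichotomy on the minimizers $A_k$: either they range over finitely many classes (pass to a constant subsequence), or the simultaneous finiteness at $\omega_0$ and $\omega_1$ forces $[\omega_0](A_k),[\omega_1](A_k)\to+\infty$, hence the convex combination $\nu_{t_k}(\lambda)\to+\infty$, contradicting boundedness. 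Your argument is more elementary, treats every $t_0\in[0,1]$ uniformly rather than only $t_0=0$, and your decomposition $\ell_t(c)=\max_i\bigl(\mathcal A_{\omega_t}([x_i,w_i])-\nu_t(\lambda_i)\bigr)$ is more careful than the paper's informal $\ell_t(c)=-\bar\nu_t(c)+p_t(c)$, which glosses over the fact that the orbit realizing the maximum may change with $t$. The paper's geometric picture, on the other hand, makes the convex-hull structure of the problem visible and connects naturally to the rank-$2$ valuation viewpoint used later in the Appendix.
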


\begin{proof} We will only prove the continuity at $t =0$. For any other $t \in (0,1]$, the proof is exactly the same. First, suppose that $\CF_{[0,1]}$ is a free $\Lambda_{[0,1]}$-module of rank $n$, we can identify $c$ as an $n$-tuple $\vec{x}$ in $\Lambda^n_{[0,1]}$. Moreover, by (\ref{geo-elm}), we can write 
\begin{equation} \label{x}
 \vec{x} = \sum_{(g_0, g_1)} \vec{a}_{(g_0, g_1)} T^{(g_0, g_1)} \,\,\,\,\mbox{where}\,\,\, \vec{a}_{(g_0, g_1)} \in (\K[\ker[\omega_0] \cap \ker[\omega_1]])^n.
 \end{equation}
Moreover, $\vec{x}$ can be further identified with a set of points $(g_0, g_1)$ on the $g_0g_1$-plane. By the finiteness condition of both $\omega_0$ and $\omega_1$,  up to a uniform shift on both indices, we can assume that all the points are lying in the first quadrant. 

Second, by definition, $\omega_t = (1-t) \omega_0 + t \omega_1$. Let $t := \frac{1}{1+ \lambda}$ for some non-negative $\lambda$ and define 
\[ \omega_{\lambda} = \lambda \omega_0 + \omega_1 (= (1+\lambda) \omega_t).\]
Note that over a set of homological spheres, $\omega_t$ obtains its minimal value if and only if $\omega_{\lambda}$ obtains its minimal value. One way of viewing $\ell_t$ is via the perturbation of valuation function $\bar{\nu}_t$. Specifically, for any $c \in \CF_{[0,1]}$, 
\[ \bar{\nu}_t(c) = \min\left\{\int_{S^2} A^*\omega_t \,\bigg| \, \mbox{$A$ is an exponent of $c$}\right\},\]
and then
\[ \ell_t(c) = -\bar{\nu}_t(c) + p_t(c), \]
where $p_t(c)$ comes from the Hamiltonian actions on Hamiltonian 1-periodic orbits, as well as the symplectic areas of the (fixed) cappings of basis elements. As $p_t(c)$ eventually goes to $p_0(c)$ when $t \to 0$, it suffices to focus on $\bar{\nu}_t(c)$ when studying the continuity of $\ell_t(c)$. Actually, we will focus on $(1+\lambda) \bar{\nu}_t(c)$, that is 
\begin{align*}
 \bar{\nu}_{\lambda}(c) &:= \min\left\{\int_{S^2} A^* \omega_{\lambda} \, \bigg| \, \mbox{$A$ is an exponent of $c$}\right\} \\
 &= \min\{\lambda g_0 + g_1\,| \,  \mbox{$(g_0, g_1)$ is an exponent of $\vec{x}$}\}.
\end{align*}
Once rephrased in this way, it suggests a geometric way to view the value $\bar{\nu}_{\lambda}(c)$: for any $\lambda \geq 0$ and for any point $(g_0, g_1)$, draw a line passing through $(g_0, g_1)$ with slope $- \lambda$, that is 
\begin{equation} \label{line}
y = - \lambda (x- g_0) + g_1.
\end{equation}
Then the minimal $y$-intercept is just the value $\bar{\nu}_{\lambda}(c)$. The nontrivial part is that the optimal point $(g_0, g_1)$ who attains the minimal $y$-intercept might change along the change of $\lambda$ (equivalently the change of $t$). However, we claim that when $\lambda >>0$, there exists a point $(g_0^*, g_1^*)$ who serves as the optimal choice for all sufficiently large $\lambda$. 

The key observation is that for any point $P = (g_0, g_1)$ attaining the value $\bar{\nu}_{\lambda}(c)$ for some $\lambda$, it fails to attain the value $\bar{\nu}_{\eta}(c)$ for any $\eta > \lambda$ if there exists another point $Q$ in the region enclosed by $y$-axis, the line (\ref{line}) passing through $(g_0, g_1)$ with slope $-\lambda$ and the line (\ref{line}) passing through $(g_0, g_1)$ with slope $-\eta$. When $\lambda \to \infty$, the width of this closed region goes to zero. Hence, by the discreteness of our points, the choice of an optimal point will be eventually stable. Hence, 
\[ \bar{\nu}_t(c) = \frac{\lambda g_0^* + g_1^*}{1+ \lambda} \xrightarrow{\lambda \to \infty} g_0^*  =\bar{\nu}_0(c). \]
The limit $\lambda \to \infty$ is equivalent to $t \to 0$. Thus we get the conclusion. 
\end{proof} 

The following proposition is the key step towards the proof of Theorem \ref{upper-semi}. A similar result of this type, but from a different set-up, is Proposition 8.4 in \cite{Oh09}.  Recall that in (\ref{chain-map}), we have defined a chain map over $\Lambda_{[0,1]}$, $\Phi_{0,t}: ((\CF_{[0,1]})_*, \partial_0, \ell_0) \to ((\CF_{[0,1]})_*, \partial_t, \ell_t)$ for any $t\in [0,1]$.

\begin{prop} \label{cont-cont} For any chain $c \in (\CF_{[0,1]}, \partial_0, \ell_0)$, the function $t \to \ell_t(\Phi_{0,t}(c))$ is upper semicontinuous at $t=0$.\end{prop}

\begin{proof} We will prove it by contrapositive. Suppose that there exists a constant $\ep_0>0$ and a sequence $t_n \to 0$ such that 
\begin{equation} \label{cp}
\ell_{t_n}(\Phi_{0, t_n}(c)) - \ell_0(c) \geq \ep_0.
\end{equation}
Since $\Phi_{0, t_n}(c) = c + \sigma_n$ where there exist non-trivial Floer trajectories between $c$ and $\sigma_n$, by the triangle inequality of $\ell_{t_n}$, 
\[ \ell_{t_n}(\Phi_{0, t_n}(c)) = \ell_{t_n}(c + \sigma_n) \leq \max\{\ell_{t_n}(c), \ell_{t_n}(\sigma_n)\}.\]
Then (\ref{cp}) implies that 
\begin{equation} \label{cp2}
\max\{\ell_{t_n}(c) - \ell_0(c), \ell_{t_n}(\sigma_n) - \ell_0(c)\} \geq \ep_0 >0. 
\end{equation} 
Lemma \ref{pc-fil} implies that the first term in (\ref{cp2}) will be smaller than $\ep_0$ when $n$ is sufficiently large. Therefore, (\ref{cp2}) is possible only if $\ell_{t_n}(\sigma_n) - \ell_0(c) \geq \ep_0$. 

Since there are only finitely many basis elements, by passing to a subsequence, we can assume that for each $n$, $\ell_{t_n}(\sigma_n) = \mathcal A_{\omega_{t_n}}(T^{A_n}[y,v])$, with the same basis element $[y,v]$. By definition, there exists some sub-chain of $c$ with elements connecting with $T^{A_n}[y,v]$ by Floer trajectories. Again, since there are only finitely many basis generators, by passing to a subsequence, we can assume that, 
\[ T^{A_n}[y,v] \,\,\mbox{is connected with} \,\,T^{B_n}[x,w] \,\,\,\,\mbox{by a Floer trajectory}\]
for a basis element $[x,w]$ and sphere classes $B_n$. In particular, set $\{T^{B_n}[x,w]\}_{n=1}^{\infty}$ is a collection of elements from chain $c$. Now, we claim that, 
\[ \left|\int_{S^2} B_n^* \alpha\right| \to \infty \,\,\,\, \mbox{where $\alpha = \omega_1- \omega_0$}.\]
In fact, since $\ell_0(c) \geq \mathcal A_{\omega_0}(T^{B_n}[x,w])$, Proposition \ref{ene-est-cor2} with $s = 0$ and $t = t_n$ implies the following inequalities, 
\begin{equation} \label{non-small}
 \ell_{t_n}(x_n) - \ell_0(c) \leq \mathcal A_{\omega_{t_n}}(T^{A_n}[y,v]) - \mathcal A_{\omega_0}(T^{B_n}[x,w]) \leq -t_n\int_{D^2} (w\#B_n)^* \alpha.
 \end{equation}
Therefore, if $|\int_{S^2} B_n^* \alpha|$ is bounded, then when $t_n$ is close to $0$, this violates (\ref{cp2}).  

Then, in particular, $B_n$ is not equal to any fixed homotopy class when $n$ is sufficiently large. The finiteness condition of chain $c$ under the symplectic structure $\omega_0$ implies, 
 \begin{equation} \label{infinity}
 \int_{S^2} B_n^*\omega_0 \to \infty.
 \end{equation}

Returning to the continuation chain map, we know 
\[ \Phi_{0, t_n}(T^{B_n}[x,w]) = T^{B_n}[x,w] + T^{A_n} [y, v] + \ldots.\]
Meanwhile, by $\Lambda_{[0,1]}$-linearity of $\Phi_{0,t}$, for any $m \in \N$, 
\[ \Phi_{0, t_n}(T^{B_m} [x,w]) = T^{B_m} [x,w] + T^{A_n + B_m - B_n} [y,v] + \ldots. \]
Since $T^{B_m} [x,w]$ is a generator of chain $c$, $T^{A_n + B_m - B_n} [y,v]$ will be a generator of chain $\sigma_n$. However, since $T^{A_n} [y,v]$ attains the maximal filtration with respect to $\omega_{t_n}$, for any $m$, $\int_{S^2} (A_n + B_m - B_n)^* \omega_{t_n} \geq \int_{S^2} A_n^*\omega_{t_n}$. This implies that 
\begin{equation} \label{infinity2}
\int_{S^2}B_m^*\omega_{t_n} \geq \int_{S^2} B_n^*\omega_{t_n}. 
 \end{equation}
Rewrite
\[ \int_{S^2} B_m^*\omega_{t_n} = \int_{S^2} B_m^*\omega_0 + t_n \int_{S^2}B_m^*\alpha: = a_m + t_n b_m,\]
where $a_m = \int_{S^2}B_m^*\omega_0$ and $b_m = \int_{S^2} B_m^*\alpha$. Moreover, denote $c_n = \int_{S^2}B_n^*\omega_{t_n}(=a_n + t_n b_n)$. Then (\ref{infinity2}) says that 
\[ a_m + t_n b_m \geq c_n. \]
Switch the index $m$ and $n$, then we get 
\[ a_n + t_m b_n \geq c_m.\]
Since $b_m = \frac{c_m - a_m}{t_m}$ and $b_n = \frac{c_n - a_n}{t_n}$, the inequalities above are 
\begin{equation} \label{ineq}
a_m + \frac{t_n}{t_m} (c_m - a_m)\geq c_n \,\,\,\,\mbox{and} \,\,\,\, a_n + \frac{t_m}{t_n} (c_n - a_n)\geq c_m.
\end{equation}
Solve $c_m$ from the first inequality, and then the second inequality implies that 
\[ a_n + \frac{t_m}{t_n} (c_n - a_n)\geq a_m + \frac{t_m}{t_n} (c_n - a_m). \]
This is equivalent to the following relation, 
\[ \left( \frac{t_m}{t_n} - 1\right) (a_m - a_n) \geq 0.\]
By (\ref{infinity}), we know $a_m > a_n$ when $m >>n$, which is strictly positive due to (\ref{infinity}). This implies $t_m > t_n$ which is a contradiction since $t_n$ converge to $0$ (so when $m > > n$, $t_m < t_n$). \end{proof}

Now, we are ready to give the proof of Theorem \ref{upper-semi}. 

\begin{proof}  [Proof of Theorem \ref{upper-semi}] For simplicity, we will only prove the case for two symplectic structures $\omega_0$ and $\omega_1$, that is, for any $t \in [0,1]$ and $a \in \QH_{[0,1]}$, the map $t \mapsto \rho_t(a, H)$ is upper semicontinuous. It is easy to see how this can be generalized to the general case.

By realization property (2) in Proposition \ref{prop-si}, there exists some $c \in (\CF_{[0,1]}, \partial_0)$ such that $\ell_0(c) = \rho_0(a, H)$ where $[c] = (PSS_0)_*(a)$. On the other hand, $\Phi_{0,t}(c)$ represents 
\[ [\Phi_{0,t}(c)] = (\Phi_{0,t})_*[c] = (\Phi_{0,t})_* ((PSS_0)_*(a)) = (PSS_t)_*(a).\]
Then by definition, we know that $\rho_t(a, H) \leq \ell_t(\Phi_{0,t}(c))$. Therefore,  
\[ \rho_{t}(a, H) - \rho_0(a, H) \leq \ell_t(\Phi_{0,t}(c)) - \ell_0(c).\]
Upper semicontinuity from Proposition \ref{cont-cont} implies desired conclusion. \end{proof} 

\begin{remark} \label{lower-semi} There is an obvious question on the lower semicontinuity of $t$-spectral invariants. A trial of imitating the proof of Theorem 8.3 in \cite{Oh09} can be carried out but some details could not go through deeply due to the non-uniform upper bound from the energy estimation from Proposition \ref{ene-est}. \end{remark}

\section{Applications on quasi-isometric embedding and capacity} \label{sec-emb}

\subsection{Proof of Theorem \ref{embedding}}

\begin{dfn}\label{ap} Let us give the following two definitions. 
\begin{itemize} 
\item[(1)]  (Definition 1.3 in \cite{Ush12}) We say $M$ admits an {\it aperiodic symplectic structure} if there exists a symplectic structure $\omega$ such that $(M, \omega)$ admits an autonomous Hamiltonian function $H$, not everywhere locally constant, such that its Hamiltonian flow has no nonconstant periodic orbit. Such $H$ is called an {\it associated aperiodic Hamiltonian} of $\omega$.
\item[(2)] For a symplectic structure $\omega$, we call it {\it strongly-aperiodic-approximated} (saa-condition) if there exists a sequence of aperiodic structures $\omega_n$ such that (a) $\omega_n \to \omega$ under the norm $|\cdot|$ in Section \ref{simple}; (b) there exists a sequence of associated aperiodic Hamiltonians $H_n$ of $\omega_n$ that $C^{\infty}$-converges to a differentiable function $H$ on $M$.
\end{itemize}
\end{dfn}

It is well-known that the computation of spectral invariants is difficult in general. The following theorem will be helpful in the proof later. 

\begin{prop} \label{sp-computation} (Proposition 4.1 in \cite{Ush10}) Let $(M, \omega)$ be a symplectic manifold. If $H$ is an autonomous Hamiltonian function on $M$ such that its Hamiltonian flow has no nonconstant contractible periodic orbit with the period at most $1$, then 
\[ \rho([M], H; \omega) = - \min_{M} H. \]
\end{prop}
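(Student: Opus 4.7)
The plan is to compute $\rho(H, [M]; \omega)$ by studying how the spectral invariant varies along the path $s \mapsto sH$ for $s \in [0,1]$ and to exploit the very restricted form of the action spectrum forced by the no-orbit hypothesis. First I would pin down the spectrum: since $\phi^{t}_{sH} = \phi^{st}_{H}$, any nonconstant contractible $1$-periodic orbit of $sH$ would be a nonconstant contractible orbit of $H$ of period $s \in (0,1]$, contradicting the hypothesis. Hence for every $s \in [0,1]$, after a small time-dependent Morse-type perturbation to achieve non-degeneracy (which only shifts the spectral invariant continuously), the contractible $1$-periodic orbits of $sH$ are constants at critical points of $H$, and
\[
\operatorname{Spec}(sH) \;=\; \bigl\{\, sH(p) - \omega(A) \,:\, p \in \operatorname{Crit}(H),\ A \in \Gamma_{\omega}\,\bigr\}.
\]

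Next, set $f(s) := \rho(sH, [M]; \omega)$. By Theorem \ref{Lip}(a), $f$ is Lipschitz on $[0,1]$, so continuous; and by Theorem \ref{realization thm}, $f(s) \in \operatorname{Spec}(sH)$ for every $s$. Viewed inside $[0,1] \times \mathbb{R}$, the total spectrum is a countable union of straight line segments parametrized by pairs $(p, A)$, with slope $H(p)$ and intercept $-\omega(A)$; distinct segments meet only at isolated points.

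The heart of the proof is a branch-rigidity statement: $f$ traces a single line segment throughout $[0,1]$. I would prove this by fixing an optimizing cycle $\alpha_{0}$ for $f(0) = 0$ and continuing it through the standard Floer continuation map associated to the path $s \mapsto sH$ to obtain a family $\alpha_{s}$ of representatives of $PSS_{s}([M])$. An energy estimate in the spirit of the computation in Section \ref{e-e} controls the filtration shift and gives an upper bound $f(s) \leq \ell_{sH}(\alpha_{s})$ whose values lie on the initial line; combined with $f(s) \in \operatorname{Spec}(sH)$ and continuity of $f$, this forces $f$ onto that line for all $s \in [0,1]$. The line is then identified via the Morse--Floer correspondence for $sH$ with $s$ small (where the filtered Floer complex reduces to the Morse complex of $H$ rescaled by $s$): in the conventions of \cite{Ush10}, $[M]$ is represented by a chain of filtration $-s\min H$, so $f(s) = -s \min H$ on a neighborhood of $0$, which pins the branch down. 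Setting $s = 1$ gives $\rho(H, [M]; \omega) = -\min H$.

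The main obstacle is the branch-rigidity step: continuity alone does not forbid $f$ from jumping at the discrete intersection points of distinct spectrum lines, and so the continuation-map construction must be arranged so that $\ell_{sH}(\alpha_{s})$ actually stays on the initial line rather than being deflected by lower-filtration corrections. Making this quantitative — essentially, verifying that the energy estimate for $s \mapsto sH$ forces the dominant term of $\alpha_{s}$ to remain the continuation of the dominant term of $\alpha_{0}$ — is the nontrivial work, and is where Novikov finiteness on capping classes and the specific form of the spectrum both become essential.
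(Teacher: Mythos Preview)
The paper does not supply a proof: the statement is quoted from \cite{Ush10} (Proposition~4.1) and used as a black box in Section~\ref{sec-emb}, so there is no in-paper argument to compare against. Your outline does follow the standard route to this result (Schwarz in the aspherical case, extended by Oh and Usher): track $f(s)=\rho(sH,[M];\omega)$, use the no-orbit hypothesis to reduce $\operatorname{Spec}(sH)$ to a countable union of lines with slopes in the finite set $\{H(p):p\in\operatorname{Crit}H\}$, combine spectrality with Lipschitz continuity, and anchor the small-$s$ behaviour via Morse--Floer. You have also correctly isolated branch rigidity as the one genuine difficulty once $\Gamma_\omega\ne 0$.

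The gap is in your proposed resolution of that step. The continuation estimate of Section~\ref{e-e} gives only $\ell_{sH}\bigl(\Phi_{s_0\to s}\alpha_0\bigr)\le \ell_{s_0H}(\alpha_0)+(s-s_0)\max_M H$, an affine upper bound with slope $\max_M H$, which is \emph{not} the slope of the branch selected at small $s$; so the assertion that ``$\ell_{sH}(\alpha_s)$ lies on the initial line'' is unsupported, and with the wrong slope this bound combined with spectrality no longer excludes branch-switching. Nor can you simply set $\alpha_s=\alpha_0$: the generators coincide for all $s$, but $\partial_{sH}$ genuinely depends on $s$, and the small-$s$ Morse cycle need not remain $\partial_{sH}$-closed. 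The device that actually closes the gap in \cite{Ush10} is measure-theoretic rather than via a continued chain: for each critical value $c$ the Lipschitz function $s\mapsto f(s)-sc$ lands in a countable set on the locus where $f$ sits on a slope-$c$ line, and the co-area formula for Lipschitz maps forces its derivative to vanish a.e.\ there, whence $f'(s)\in\{H(p)\}$ for a.e.\ $s$; the remaining identification of \emph{which} critical value is then made using that the value picked out by $[M]$ is extremal together with a direct one-sided bound (via PSS, or equivalently via the duality pairing with $[pt]$). Your continuation idea points in the right direction morally, but it is this Sard-type lemma, not an energy estimate on a single continued chain, that carries the non-aspherical case.
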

Notice that the condition in this proposition is weaker than the assumption of Theorem 1.1 in \cite{Ush13}. In \cite{Ost03}, for {\it any} symplectic manifold $(M, \omega)$, it is proved that Hofer diameter of $\widetilde{\Ham}(M, \omega)$ is infinite by using, roughly speaking, a sequence of bump functions on a displaceable subset. Theorem \ref{embedding} shows that under a certain condition (which covers a variety of symplectic manifolds, especially in 4-dimensions) the Hofer diameter of $\widetilde{\Ham}(M, \omega)$ goes to infinity in uncountably many linearly independent directions. The proof of Theorem \ref{embedding} takes its inspiration from the proof of Theorem 1.1 in \cite{Ush13}. 

\begin{proof} [Proof of Theorem \ref{embedding}] First, note that if $\omega$ is already aperiodic, then in particular $(M, \omega)$ admits an autonomous Hamiltonian $H$ such that its Hamiltonian flow has no nonconstant contractible periodic orbit. Then by the compactness of $M$ and Sard's theorem, there exists a non-trivial closed interval $[a,b]$ (assumed to be $[0,1]$) such that every $c \in [0,1]$ is a regular value of $H$. Now, take a function $g: \R \to [0,1]$ such that its support is in $(0,1)$, $\max g= 1$ and its only local minimum has value $0$. For each $\vec{v} = (v_1, v_2, ...) \in \R^{\infty}$, define $f_{\vec{v}}: \R \to \R$ by
\[ f_{\vec{v}}(s) = \sum_{i=1}^{\infty} v_i\cdot g\left(2^{i} (s - (1- 2^{1-i}))\right). \]
The embedding $\Phi: \R^{\infty} \to \widetilde{\Ham}(M, \omega)$ is constructed as $\Phi(\vec{v}) = [\phi^1_{f_{\vec{v}} \circ H}]$. Then for any non-zero $\vec{v} \in \R^{\infty}$, $X_{f_{\vec{v}}\circ H} = f'_{\vec{v}}(H) \cdot  X_H$, which implies $\Phi$ is a homomorphism. Therefore, 
\begin{align*}
 d_{H} (\Phi(\vec{v}), \Phi(\vec{w})) &= d_{H}([\phi^1_{f_{\vec{v}} \circ H}], [\phi^1_{f_{\vec{w}} \circ H}]) \\
 & = d_{H} ([\phi^1_{f_{\vec{v}- \vec{w}} \circ H}], \mathds{1}_M) \\
 & \leq ||{f_{\vec{v} - \vec{w}} \circ H}||_{H}\\
 & = \max({f_{\vec{v}} \circ H}) - \min({f_{\vec{w}} \circ H}) = osc(\vec{v} - \vec{w}). 
 \end{align*}
Note that this computation is true for any $\omega$ without assuming $\omega$ to be aperiodic. 

On the other hand, $f_{\vec{v}} \circ H$ also satisfies the condition that it has no nonconstant contractible periodic orbit. In particular, it has no nonconstant contractible periodic orbit with period at most $1$. By Theorem \ref{sp-computation}, we know 
\[ \rho([M], f_{\vec{v}- \vec{w}} \circ H; \omega) = - \min_{M} (f_{\vec{v}- \vec{w}} \circ H) = \max_{i} (w_i- v_i) \]
and 
\[  \rho([M], f_{\vec{w}- \vec{v}} \circ H; \omega) = - \min_{M} (f_{\vec{w}- \vec{v}} \circ H) = \max_{i} (v_i- w_i).\]
Therefore,  
\[ d_{H} (\Phi(\vec{v}), \Phi(\vec{w})) \geq \max\{\max_{i} (w_i- v_i), \max_{i} (v_i- w_i)\} = |\vec{v} - \vec{w}|_{\infty}\]
where the first inequality comes from (a) in Theorem \ref{Lip}. 

Next, if $\omega$ satisfies the saa-condition, then take a sequence of aperiodic symplectic structures $\omega_n \to \omega$. By Corollary \ref{cor-upper}, for each fixed $\vec{w}$, $\vec{v}$ in $\R^{\infty}$ and any given $\ep>0$, there exists an $N \in \N$ such that whenever $n \geq N$, we have 
\[ \max_{i} (w_i - v_i) = \rho([M], f_{\vec{v}- \vec{w}} \circ H_n; \omega_n) \leq \rho([M], f_{\vec{v}- \vec{w}} \circ H; \omega) + \ep \]
and 
\[ \max_{i} (v_i - w_i) = \rho([M], f_{\vec{w}- \vec{v}} \circ H_n; \omega_n) \leq \rho([M], f_{\vec{w}- \vec{v}} \circ H; \omega) + \ep \]
where the equalities come from the computation above when the symplectic structure is aperiodic. Therefore, 
\[ d_{H} (\Phi(\vec{v}), \Phi(\vec{w})) + \ep \geq \max\{\max_{i} (w_i- v_i), \max_{i} (v_i- w_i)\} = |\vec{v} - \vec{w}|_{\infty}.\]
Since this result is true for any $\ep >0$, we get the conclusion.  \end{proof} 

\begin{remark} If the continuity result of boundary depth is affirmative (especially the lower semicontinuity), a similar argument as in the proof of Theorem \ref{embedding} can imply that if $M$ admits a symplectic structure which satisfies the saa-condition, then there is a quasi-isometric embedding from $(\R^{\infty}, |-|_{\infty})$ into $(\Ham(M, \omega), |-|_{\infty})$. This can be regarded as an ``approximated'' version of Theorem 1.1 in \cite{Ush13}, which can probably cover more symplectic manifolds. \end{remark}

\bibliographystyle{amsplain}
\bibliography{ssp_biblio} 
\noindent \\

\end{document}